\def\thetitle{Fast construction on a restricted budget}
\definecolor{CombinatoricaAqua}{HTML}{00698C}
\definecolor{CombinatoricaBlue}{HTML}{3A3293}
\definecolor{CombinatoricaBrown}{HTML}{66220C}
\definecolor{CombinatoricaRed}{HTML}{DF2A27}
\definecolor{HarvardCrimson}{rgb}{0.6471, 0.1098, 0.1882}
\definecolor{DAGreen}{HTML}{339900}
\let\reftagform@=\tagform@
\def\tagform@#1{\maketag@@@
	{(\ignorespaces\textcolor{CombinatoricaBrown}{#1}\unskip\@@italiccorr)}}
\renewcommand{\eqref}[1]{\textup{\reftagform@{\ref{#1}}}}
\Crefname{mainthm}{Theorem}{Theorems}
\Crefname{fact}{Fact}{Facts}
\Crefname{claim}{Claim}{Claims}
\declaretheoremstyle[
spaceabove=\topsep, spacebelow=\topsep,
headfont=\color{CombinatoricaBrown}\normalfont\bfseries,
bodyfont=\itshape,
]{thm}
\declaretheoremstyle[
spaceabove=\topsep, spacebelow=\topsep,
headfont=\color{CombinatoricaBrown}\normalfont\bfseries,
bodyfont=\normalfont,
]{dfn}
\declaretheoremstyle[
spaceabove=0.5\topsep, spacebelow=0.5\topsep,
headfont=\color{CombinatoricaBrown}\normalfont\bfseries,
bodyfont=\normalfont,
]{rmk}
\declaretheorem[style=thm,name=Theorem,parent=section]{mainthm}
\declaretheorem[style=thm,sibling=mainthm]{theorem}
\declaretheorem[style=thm,sibling=theorem]{lemma}
\declaretheorem[style=thm,sibling=theorem]{corollary}
\declaretheorem[style=thm,sibling=theorem]{claim}
\declaretheorem[style=thm,sibling=theorem]{proposition}
\declaretheorem[style=thm,sibling=theorem]{observation}
\declaretheorem[style=thm,sibling=mainthm]{conjecture}
\declaretheorem[style=rmk,numbered=no]{remark}
\renewcommand{\eprint}[1]{\href{https://arxiv.org/abs/#1}{arXiv:#1}}
\renewcommand{\PrintNames@a}[4]{%
	\PrintSeries{\name}
	{#1}
	{}{ and \set@othername}
	{,}{ \set@othername}
	{}{ and \set@othername}
	{#2}{#4}{#3}%
}
\def\mathcolor#1#{\@mathcolor{#1}}
\def\@mathcolor#1#2#3{%
	\protect\leavevmode
	\begingroup
	\color#1{#2}#3%
	\endgroup
}
\definecolor{Red}{rgb}{0.618,0,0}
\definecolor{Blue}{rgb}{0,0,1}
\definecolor{Green}{rgb}{0,0.298,0}
\title{\thetitle}
\author{
  Alan Frieze\thanks{
    Department of Mathematical Sciences,
    Mellon College of Science,
    Carnegie Mellon University,
    Pittsburgh, PA, USA.
    Email: \href{mailto:frieze@cmu.edu}
                {\tt frieze@cmu.edu}.
    Research supported in part by NSF grant DMS1952285.
  }
  \and
  Michael Krivelevich\thanks{
    School of Mathematical Sciences,
    Tel Aviv University,
    Tel Aviv 6997801, Israel.
    Email: \href{mailto:krivelev@tauex.tau.ac.il}
                {\tt krivelev@tauex.tau.ac.il}.
    Research supported in part by USA--Israel BSF grant 2018267.
  }
  \and
  Peleg Michaeli\thanks{
    Department of Mathematical Sciences,
    Mellon College of Science,
    Carnegie Mellon University,
    Pittsburgh, PA, USA.
    Email: \href{mailto:pelegm@cmu.edu}
                {\tt pelegm@cmu.edu}.
    Research partially supported by NSF grant DMS1952285.
  }
}
\def\namedlabel#1#2{\begingroup
  #2%
  \def\@currentlabel{#2}%
  \phantomsection\label{#1}\endgroup
}
\newcommand{\defn}[1]{{\bfseries #1}}
\newcommand{\eps}{\varepsilon}
\renewcommand{\phi}{\varphi}
\newcommand{\cH}{\mathcal{H}}
\newcommand{\cP}{\mathcal{P}}
\newcommand{\cR}{\mathcal{R}}
\newcommand{\co}{\mathfrak{o}}
\newcommand{\floor}[1]{\left\lfloor{#1}\right\rfloor}
\newcommand{\ceil}[1]{\left\lceil{#1}\right\rceil}
\DeclareMathOperator{\ent}{H}
\newcommand{\sm}{\smallsetminus}
\newcommand{\es}{\varnothing}
\newcommand{\pr}[0]{\mathbb{P}}
\newcommand{\E}[0]{\mathbb{E}}
\newcommand{\whp}[0]{\textbf{whp}}
\newcommand{\Whp}[0]{\textbf{Whp}}
\newcommand{\Dist}[1]{\mathsf{#1}}
\newcommand{\Bin}{\Dist{Bin}}
\newcommand{\stage}[2]{\hfill{\small{\textcolor{CombinatoricaAqua}{Time:} #1 | \textcolor{DAGreen}{Budget:} #2}}%
                       \vspace{2pt plus 2pt minus 2pt}\\}
\newcommand{\cG}{\mathcal{G}}
\newcommand{\cO}{\mathcal{O}}
\newcommand{\w}{\mathbf{w}}
\newcommand{\x}{\mathbf{x}}
\begin{document}
\maketitle
\thispagestyle{empty}

\begin{abstract}
  \normalsize 
  We introduce a model of a controlled random graph process.
  In this model,
  the edges of the complete graph $K_n$ are ordered randomly and then revealed,
  one by one,
  to a player called Builder.
  He must decide,
  immediately and irrevocably,
  whether to purchase each observed edge.
  The observation time is bounded by parameter $t$,
  and the total budget of purchased edges is bounded by parameter $b$.
  Builder's goal is to devise a strategy that,
  with high probability,
  allows him to construct a graph of purchased edges possessing a target graph property $\cP$,
  all within the limitations of observation time and total budget.
  We show the following:
  \begin{itemize}
    \item Builder has a strategy to achieve $k$-vertex-connectivity at the hitting time for this property
      by purchasing at most $c_kn$ edges for an explicit $c_k<k$;
      and a strategy to achieve minimum degree $k$ (slightly) after the threshold for minimum degree $k$
      by purchasing at most $(1+\eps)kn/2$ edges (which is optimal).
    \item Builder has a strategy to create a Hamilton cycle
      at the hitting time for Hamiltonicity by purchasing at most $Cn$ edges
      for an absolute constant $C>1$;
      this is optimal in the sense that $C$ cannot be arbitrarily close to $1$.
      This substantially extends the classical hitting time result for Hamiltonicity
      due to Ajtai--Koml\'os--Szemer\'edi and Bollob\'as.
    \item Builder has a strategy to create a perfect matching
      by time $(1+\eps)n\log{n}/2$ while purchasing at most $(1+\eps)n/2$ edges
      (which is optimal).
    \item Builder has a strategy to create a copy of a given $k$-vertex tree
      if $t\ge b\gg\max\{(n/t)^{k-2},1\}$, and this is optimal;
    \item For $\ell=2k+1$ or $\ell=2k+2$,
      Builder has a strategy to create a copy of a cycle of length $\ell$
      if $b\gg\max \{n^{k+2}/t^{k+1},n/\sqrt{t}\}$, and this is optimal.
  \end{itemize}
\end{abstract}

\newpage
\section{Introduction}
\subsection{The model}
The random graph process, introduced by Erd\H{o}s and R\'enyi~\cites{ER59,ER60},
is a stochastic process that starts with an empty $n$-vertex graph
and, at each step, gains a new uniformly selected random edge.
At any fixed time $t$, the process is distributed as the uniform random graph $G(n,t)$.
A \defn{graph property} is a family of graphs
that is closed under isomorphisms.
It is \defn{monotone} if it is closed under addition of edges.
A vast body of literature is concerned with finding {\em thresholds}
for various monotone graph properties in the random graph process,
namely, with finding time $t_c$ such that the random graph process belongs to $\cP$
with high probability
(\whp{}; that is, with probability tending to $1$ as $n\to\infty$)
whenever $t$ is much (or somewhat) larger than $t_c$
and does not belong to $\cP$ \whp{} if $t$ is much (or somewhat) smaller than $t_c$.

In many cases, if one observes the random graph process at time $t$ above the threshold,
the graph has the desired property but, in fact, contains a much sparser subgraph that has the property.
For example, 
one of the outstanding results in this model regards a threshold for Hamiltonicity:
Koml\'os and Szemer\'edi~\cite{KS83} and, independently, Bollob\'as~\cite{Bol84}
showed that if $2t/n-\log{n}-\log\log{n}$ tends to infinity,
then the random graph process, at time $t$, contains a Hamilton cycle \whp{}.
Evidently, not all $\sim n\log{n}/2$ observed edges must be included in the resulting graph
for it to be Hamiltonian
(as any Hamilton cycle only uses $n$ edges).
Nevertheless, when an edge arrives, it is generally hard to determine whether it
will be crucial for Hamiltonicity.

The above motivates the following ``online'' version of building a subgraph of the random graph process.
We think of it as a one-player game, where the player (``\defn{Builder}'')
has a limited ``budget''.
Edges ``arrive'' one at a time in random order and are presented to Builder.
Whenever he {\em observes} an edge,
he must immediately and irrevocably decide whether to {\em purchase} it.
A non-purchased edge is thrown away and never reappears.
The \defn{time} (total number of presented edges)
and the \defn{budget} (maximum number of edges Builder can purchase)
are both capped.
The question is as to whether or not, under the given time and budget constraints,
Builder has a strategy that allows him to obtain,
\whp{},
a particular monotone graph property in the graph of purchased edges.

To demonstrate the model, consider the property of connectedness.
Let $\tau_C$ be the (random) time at which the random graph process becomes connected.
Evidently, if Builder wishes to construct a connected subgraph of the random graph process,
he must purchase at least $n-1$ edges.
However, in this case purchasing $n-1$ edges suffices:
Builder's strategy would be to purchase an edge if and only if it decreases the number of connected components in his graph.
That way, Builder maintains a forest, which becomes connected exactly at time $\tau_C$.
Therefore, in this example, Builder does not have to ``pay'' for having to make decisions online.
However, this is not always the case.
For example, if Builder wants to purchase a triangle
and wishes to do so while observing $o(n^2)$ edges,
he must purchase (many) more than three edges (see \cref{thm:cycles}).

~

We denote the underlying random graph process at time $t$
(namely, after $t$ edges have been presented to Builder)
by $G_t$
(where the number of vertices, $n$, is implicit).
As we mentioned earlier, $G_t$ is distributed as the uniform random graph $G(n,t)$.
The \defn{hitting time} for a monotone graph property $\cP$
is the (random) minimum time $t$ for which $G_t$ has $\cP$.
Builder's graph at time $t$, denoted $B_t$, is a subgraph of $G_t$ on the same vertex set
that consists of the edges purchased by Builder by time $t$.
A \defn{$(t,b)$-strategy} of Builder is a (potentially random) function that,
for any $s\le t$,
decides whether to purchase the edge presented at time $s$,
given $B_{s-1}$,
under the limitation that $B_s$ has at most $b$ edges.

\subsection{Our results}
Our first result discusses a strategy for constructing a subgraph with a given minimum degree
and, for $k\ge 3$, of a given vertex-connectivity,
as quickly as possible.
For every positive integer $k$,
denote by $\tau_k$ the hitting time for minimum degree $k$ in the random graph process.
It was proved by Erd\H{o}s and R\'enyi~\cite{ER61} that
$\tau_k=\frac{n}{2}(\log{n}+(k-1)\log\log{n}+O(1))$ \whp{}
(see \cref{lem:gnt:deg:hit}).
Let $\kappa$ denote vertex-connectivity.

\begin{mainthm}[Minimum degree and vertex-connectivity at the hitting time]\label{thm:mindeg:hit}
  For every positive integer $k$
  there exists a constant $\co_k\in(k/2,3k/4]$
  such that the following holds.
  If $b\ge \co_k n+\omega(\sqrt{n}\log{n})$
  then there exists a $(\tau_k,b)$-strategy $B$ of Builder such that
  \[
    \lim_{n\to\infty} \pr(\delta(B_{\tau_k})\ge k) = 1.
  \]
  If $k\ge 3$, the same strategy guarantees
  \[
    \lim_{n\to\infty} \pr(\kappa(B_{\tau_k})\ge k) = 1.
  \]
\end{mainthm}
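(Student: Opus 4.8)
The plan is to analyse the natural greedy strategy and show it stays within the stated budget. \emph{Greedy} tells Builder to purchase the presented edge $e=uv$ exactly when $\deg_B(u)<k$ or $\deg_B(v)<k$ at the moment $e$ arrives (and the cap $b$ has not been reached). An easy induction on $s$ shows that under greedy $\deg_{B_s}(v)=\min\{k,\deg_{G_s}(v)\}$ for every vertex $v$: every edge reaching $v$ while $\deg_B(v)<k$ is bought. Consequently, as long as the run never reaches the cap, $\delta(B_{\tau_k})=\min\{k,\delta(G_{\tau_k})\}=k$ deterministically. So it suffices to prove that \emph{uncapped} greedy purchases at most $o_k n+c_k\sqrt n\log n$ edges \whp{}, for a suitable constant $o_k\in[k/2,k)$.

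I would first pin down the cost. For each vertex $v$ let $F_v$ be the set of the first $k$ edges incident to $v$ in the process (well defined since $\deg_{G_{\tau_k}}(v)\ge k$). From the rule above, $e=uv$ is bought iff $e\in F_u\cup F_v$, so $B_{\tau_k}=\bigcup_v F_v$, and every bought edge arrives by $\tau_k$. Counting vertex--edge incidences, $\sum_v|F_v|=kn$, whereas an edge of $\bigcup_v F_v$ contributes $1$ to this sum if it lies in exactly one $F_v$ and $2$ if it lies in two; hence, with $X:=\#\{uv:uv\in F_u\cap F_v\}$, one gets $kn=|B_{\tau_k}|+X$. Thus the cost is $kn-X$, and (since every edge counted by $X$ lies among the first $k$ at both endpoints, hence arrives by $\tau_k$) it remains to show $X\ge\tfrac12 I_k\,n-c_k\sqrt n\log n$ \whp{}, where $o_k:=k-\tfrac12 I_k$ and $I_k$ is as below.

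Next I would compute $\E[X]$. Conditioning on the position $t$ of a fixed edge $uv$ in the random ordering of $E(K_n)$, the numbers of edges at $u$ and at $v$ preceding it are mildly dependent hypergeometric variables which, for $t=\Theta(n)$, jointly converge to two independent $\Pois(2t/n)$'s; hence $\pr(uv\in F_u\cap F_v\mid t)\to\bigl(e^{-\lambda}\sum_{j<k}\lambda^j/j!\bigr)^2$ with $\lambda=2t/n$. Summing over $uv$ and $t$ (terms with $t\gg n\log n$ are negligible) and passing to the integral,
\[
  \E[X]=\tfrac n2\,I_k+O(\polylog n),\qquad
  I_k:=\int_0^\infty e^{-2\lambda}\Bigl(\sum_{j=0}^{k-1}\tfrac{\lambda^j}{j!}\Bigr)^{\!2}d\lambda
  =\sum_{i,j=0}^{k-1}\binom{i+j}{i}2^{-(i+j+1)}.
\]
Since $I_k>0$, while grouping the last sum by $m=i+j$ gives $\sum_{i,j<k}\binom{i+j}{i}2^{-(i+j)}\le\sum_{m=0}^{2k-2}2^{-m}\cdot 2^m<2k$, we obtain $I_k\in(0,k)$ and hence $o_k=k-\tfrac12 I_k\in(k/2,k)\subseteq[k/2,k)$, as required. (This $o_k$ is simply the value attained by greedy; it need not be optimal, but the lower bound $o_k\ge k/2$ is in any case forced, as a graph of minimum degree $k$ has at least $kn/2$ edges.)

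The main work is concentration of $X$. Fix a truncation time $T=Cn\log n$ with $C$ a large constant. Then $\pr(\tau_k>T)\le n\cdot\pr(\deg_{G_T}(v)<k)=n^{-2C+1+o(1)}$, so whp $\tau_k\le T$; on that event $X$ equals the variable $X'$ obtained by running the same count on $G_T$ alone, and moreover $|\E X-\E X'|\le\binom n2\pr(\tau_k>T)=o(1)$. Exposing $e_1,\dots,e_T$ one at a time gives a Doob martingale for $X'$, and any single transposition involving a position $\le T$ alters the edge-ranks — hence the sets $F_\cdot$, hence $X'$ — only at $O(1)$ vertices, so the martingale increments are $O(1)$; Azuma over the $T$ steps then yields $\pr(|X'-\E X'|\ge\lambda)\le 2\exp(-\Omega(\lambda^2/T))$, which is $o(1)$ for $\lambda=c_k\sqrt n\log n$ (as $\lambda^2/T\asymp\log n$). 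Combining, $X\ge\E X-c_k\sqrt n\log n$ \whp{}, so greedy's cost $kn-X$ is at most $o_k n+O(\sqrt n\log n)$ \whp{}. The delicate points — and the main obstacle — are exactly here: $T$ must be chosen large enough that $\pr(\tau_k>T)$ beats the crude $\binom n2$ bound on $|X-X'|$, yet small enough (still $O(n\log n)$) that Azuma over $T$ steps delivers a $\sqrt n\log n$, rather than $\Omega(n)$, fluctuation; and one must make the hypergeometric-to-Poisson comparison behind $I_k$ rigorous with $o(\sqrt n\log n)$ error (via Poissonisation or inclusion--exclusion). The rest is routine bookkeeping.
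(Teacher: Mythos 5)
Your strategy is exactly the paper's: greedy purchase of any edge with an endpoint of Builder-degree below $k$, whose purchased graph at time $\tau_k$ is precisely the random $k$-nearest-neighbour graph $\cO_k$ (with arrival times playing the role of edge lengths). Where you diverge is in bounding its size: the paper simply invokes Cooper--Frieze (\cite{CF95}, Theorem~1.1), which states that $e(\cO_k)=o_kn+O(\sqrt{n}\log n)$ \whp{} with $o_k\in[k/2,k)$, whereas you re-derive this estimate from scratch via the identity $e(\cO_k)=kn-X$ (with $X$ the number of ``mutual'' edges lying in $F_u\cap F_v$), a Poisson-approximation computation giving $\E X\sim \tfrac n2 I_k$ with $I_k=\sum_{i,j<k}\binom{i+j}{i}2^{-(i+j+1)}$, and concentration by truncating at $T=Cn\log n$ and running Azuma over the first $T$ exposed edges with $O(1)$ transposition-Lipschitz increments. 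This plan is sound (your sanity check $o_1=3/4$ matches the classical nearest-neighbour constant, and the truncation makes the martingale have only $O(n\log n)$ steps, so the $\sqrt n\log n$ fluctuation does come out), so what you buy is a self-contained proof with an explicit formula for $o_k$, at the cost of redoing, in sketch form, exactly the content of the cited theorem; the two admitted gaps (rigorous hypergeometric-to-Poisson comparison with $o(\sqrt n\log n)$ total error, and the permutation-martingale setup) are routine but are precisely the work outsourced to \cite{CF95}. One small correction: the asserted equality $\deg_{B_s}(v)=\min\{k,\deg_{G_s}(v)\}$ is false in general, since greedy may buy an edge at $v$ for the sake of its other endpoint after $v$ already has degree $k$; only the inequality $\deg_{B_s}(v)\ge\min\{k,\deg_{G_s}(v)\}$ holds, but that (together with $\deg_{B_s}(v)\le\deg_{G_s}(v)$) is all you use, both for $\delta(B_{\tau_k})\ge k$ and for the identification $B_{\tau_k}=\bigcup_v F_v$, so the argument is unaffected.
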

For $k=1$, as we mentioned in the introduction,
a sufficient and necessary budget for connectivity at the hitting time is $n-1$.
For $k=2$, \cref{thm:ham:hit} below implies that a budget of $O(n)$ suffices
for $2$-connectivity at the hitting time for minimum degree $2$.
The constant $\co_k$ in \cref{thm:mindeg:hit} is explicit and computable
(see \cref{eq:ok} in \cref{sec:ok};
 the first three values in the sequence $\co_k$ are $\frac{3}{4}$, $\frac{11}{8}$ and $\frac{63}{32}$),
and is roughly $k/2$ when $k$ is large (see \cref{cor:ok:size}).
We believe that at the hitting time $\tau_k$, that constant is optimal
(see \cref{conj:mindeg} and the discussion following it).
However,
if we allow the time to be optimal only asymptotically,
then an asymptotically optimal budget suffices.
\begin{mainthm}[Minimum degree]\label{thm:mindeg}
  Let $k$ be a positive integer and let $\eps>0$.
  If $t\ge(1+\eps)n\log{n}/2$ and $b\ge (1+\eps)kn/2$
  then there exists a $(t,b)$-strategy $B$ of Builder such that
  \[
    \lim_{n\to\infty} \pr(\delta(B_t)\ge k) = 1.
  \]
\end{mainthm}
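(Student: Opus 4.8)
My plan is a two-phase strategy. Call a vertex $v$ \emph{deficient} at time $s$ if $\deg_{B_s}(v)<k$, let $W_s$ be the set of deficient vertices, and set $\Phi_s:=\sum_v\max\{0,k-\deg_{B_s}(v)\}$ (the total deficiency), so that $\Phi_0=kn$ and $\lvert W_s\rvert\le\Phi_s\le k\lvert W_s\rvert$. Fix $t_1:=\floor{\eps n\log n/8}$. In \emph{Phase~I} (presentation times $s\le t_1$) Builder buys the presented edge iff both its endpoints are deficient; in \emph{Phase~II} ($t_1<s\le t$) he buys it iff at least one endpoint is deficient and he still has budget. Since $t\ge(1+\eps)n\log n/2$, Phase~II has length $\Delta:=t-t_1\ge(1+\tfrac34\eps)\tfrac{n\log n}{2}$.

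The budget bookkeeping is immediate: every Phase~I purchase decreases $\Phi$ by exactly $2$ (both endpoints had deficiency at least $1$), so Phase~I costs at most $\Phi_0/2=kn/2$ and never reaches the cap; every Phase~II purchase decreases $\Phi$ by at least $1$, so Phase~II costs at most $\Phi_{t_1}\le k\lvert W_{t_1}\rvert$. Consequently it is enough to establish two facts: (i) \whp{} $\lvert W_{t_1}\rvert\le\eps n/2$, so that \whp{} the total cost is at most $kn/2+\eps kn/2\le b$ and the cap is never binding; and (ii) \whp{} every vertex is incident to at least $k$ of the $\Delta$ edges presented during Phase~II. Granting (i) and (ii), correctness is immediate: a vertex that ever becomes non-deficient remains non-deficient, and in Phase~II Builder buys \emph{every} presented edge at a still-deficient vertex, so (ii) forces every vertex of $W_{t_1}$ up to degree $k$ by time $t$, giving $\delta(B_t)\ge k$.

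For (i) (where we may assume $\eps<2$, else the event is empty), suppose $\lvert W_{t_1}\rvert>\eps n/2$. As $\Phi$ is non-increasing, $\Phi_s\ge\lvert W_{t_1}\rvert>\eps n/2$ for all $s\le t_1$, and hence $\lvert W_{s-1}\rvert\ge\Phi_{s-1}/k>\eps n/(2k)$ throughout $s\le t_1$. Whenever this holds, at least $\tfrac14\lvert W_{s-1}\rvert^2$ of the edges of $K_n$ not presented before time $s$ join two deficient vertices — indeed, any such edge presented earlier (necessarily in Phase~I) would have been bought, so at most $O(k\lvert W_{s-1}\rvert)$ of the $\binom{\lvert W_{s-1}\rvert}{2}$ relevant edges are missing — and each of these edges, upon presentation, is bought and drops $\Phi$. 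Therefore at each such step the presented edge drops $\Phi$ with conditional probability at least a constant $q=q(\eps,k)>0$. A standard coupling with i.i.d.\ $\Bernoulli(q)$ variables then gives $\pr(\lvert W_{t_1}\rvert>\eps n/2)\le\pr(\Bin(t_1,q)<kn/2)$, which is $o(1)$ since $t_1q=\Theta_{\eps,k}(n\log n)\gg kn$.

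For (ii), condition on the history $\mathcal{F}_{t_1}$, which \whp{} satisfies $\Delta(G_{t_1})\le\beta\log n$ for a suitable $\beta=\beta(\eps)$ (a routine first-moment bound). Given $\mathcal{F}_{t_1}$, the Phase~II edges are a uniformly random $\Delta$-subset of the unpresented edges, so the number incident to a fixed vertex $v$ is a hypergeometric variable with mean at least $(1+\tfrac34\eps-o(1))\log n\ge(1+\tfrac12\eps)\log n$. Since the lower tail of a hypergeometric variable is dominated by that of the binomial with the same parameters, the sharp Chernoff estimate $\pr(\Bin(m,p)\le k-1)\le C_k(mp)^{k-1}e^{-mp}$ (valid since $mp\to\infty$ while $pk\to0$) yields $\pr(v\text{ receives fewer than }k\text{ Phase~II edges}\mid\mathcal{F}_{t_1})\le n^{-1-\eps'}$ for some $\eps'=\eps'(\eps)>0$ and all large $n$; a union bound over the $n$ vertices completes (ii). \textbf{Where the difficulty lies:} the tension is in the split point $t_1$ — the union bound in (ii) needs $\Delta\ge(1+\Omega_\eps(1))\tfrac{n\log n}{2}$, which caps $t_1$ at $O_\eps(n\log n)$, so one must show that a comparatively short greedy first phase already clears all but an $o(1)$-fraction of the deficient vertices; the coupling in (i) is precisely what makes that quantitatively true, and I expect it to be the crux of the argument, with the remaining steps being routine.
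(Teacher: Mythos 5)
Your proposal is correct and follows essentially the same route as the paper: a greedy first phase that purchases edges whose two endpoints are both below degree $k$ (cost at most $kn/2$, leaving \whp{} at most $\eps n/2$ deficient vertices, exactly the paper's greedy $k$-matching lemma), followed by a clean-up phase purchasing every observed edge at a still-deficient vertex (cost at most $\eps kn/2$), using that in the remaining $(1+\Omega(\eps))n\log n/2$ rounds every vertex \whp{} receives at least $k$ fresh edges. The only differences are bookkeeping: the paper runs its first stage for only $Cn$ steps and justifies the clean-up by the logarithmic minimum degree of the process at time $t$, whereas you run Phase~I for $\Theta(\eps n\log n)$ steps and carry out the clean-up estimate explicitly via a hypergeometric sharp lower-tail bound and a union bound --- both are fine.
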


The above theorem is tight in the following sense:
if $t\le(1-\eps)n\log{n}/2$ then the underlying random graph process has, \whp{},
isolated vertices;
and if $b<kn/2$ then every strategy will fail
since Builder will not be able to purchase enough edges to obtain the required minimum degree.
We remark that
after an earlier version of this paper appeared online,
a vertex-connectivity version of \cref{thm:mindeg} that holds for every $k\ge 2$
has been proved by Lichev~\cite{Lic22+} (see \cref{sec:conclude}).
\Cref{thm:mindeg} follows 
from \cref{thm:pm} below (for $k=1$)
and 
from Lichev's result (for $k\ge 2$).
We keep it here due to its short and simple proof (see \cref{sec:mindeg}).
Note that \cref{thm:mindeg:hit} is {\em not} implied by Lichev's result,
as \cref{thm:mindeg:hit} implies a strategy that succeeds \whp{} {\em at the hitting time} $\tau_k$,
whereas Lichev's strategy typically requires $\omega(n)$ additional steps
(which can, as we demonstrate later in \cref{thm:ham:hit} and the discussion following it,
make a big difference).
An asymptotic version of Lichev's result,
according to which for every $\eps>0$ there exists $k_0$
such that for every $k\ge k_0$
there exists a strategy to build a $k$-vertex-connected graph
in time $(1+\eps)n\log{n}/2$
and with budget $(1+\eps)kn/2$
follows from \cref{thm:mindeg:hit}
together with the observation that $\co_k\sim k/2$ when $k$ is large
(\cref{cor:ok:size}).

\medskip

We continue to Hamiltonicity.
The classical {\em hitting-time} result of Bollobás~\cite{Bol84}
and independently of Ajtai, Koml\'os, and Szemer\'edi~\cite{AKS85}
states that the random graph process, \whp{},
becomes Hamiltonian as soon as its minimum degree reaches $2$.
As we reminded earlier, $\tau_2=\Theta(n\log{n})$,
while a Hamilton cycle has only $n$ edges.
The following result states that there exists an online algorithm
choosing only linearly many edges from the first $\tau_2$ edges of the random graph process
that still obtains a Hamilton cycle \whp{}.
Moreover, as we show in the proof,
this algorithm is {\em extremely} simple, deterministic, and requires only local information.
The theorem recovers -- and substantially strengthens -- the aforementioned classical result.
\begin{mainthm}[Hamiltonicity at the hitting time]\label{thm:ham:hit}
  There exists $C>1$ for which there exists a $(\tau_2,Cn)$-strategy $B$ of Builder with
  \[
    \lim_{n\to\infty} \pr(B_{\tau_2}\text{ is Hamiltonian})=1.
  \]
\end{mainthm}
The constant $C$ we obtain in \cref{thm:ham:hit} is large,
and we made no serious effort to optimise it.
A natural question is whether it could be as low as $1+\eps$ for every $\eps>0$.
The answer, found by Anastos~\cite{Ana22+} (see the concluding remarks there), is negative.
As part of the proof of \cref{thm:ham:hit},
we show that the so-called {\em random $k$-nearest neighbour graph}
(see \cref{sec:ok})
-- for large enough constant $k$ --
is \whp{} Hamiltonian
(\cref{thm:ok:ham}).
This partially solves an open problem of the first author~\cite{FriBib}*{Problem 45}.
To prove \cref{thm:ham:hit},
we prove a stronger hitting time version of \cref{thm:ok:ham} (\cref{thm:ok:ham:hit})
which states that the $k$-nearest neighbour graph,
considered as a process and {\em stopped as the minimum degree becomes $2$},
is \whp{} Hamiltonian.

We remark that an immediate corollary of \cref{thm:ham:hit}
(using \cref{lem:gnt:deg:hit})
is that if $t\ge(1+\eps)n\log{n}/2$ (namely, slightly after the Hamiltonicity threshold)
and $b\ge Cn$ (namely, when the budget is inflated by a constant),
then there exists a $(t,b)$-strategy of Builder that succeeds (\whp{}) in building a Hamilton cycle.
In an earlier version of this paper that appeared online,
we also proved a complementary statement:
if $t\ge Cn\log{n}/2$ and $b\ge (1+\eps)n$,
then there exists a ``successful'' $(t,b)$-strategy
(see \cref{thm:ham}).
After that early version appeared online,
Anastos~\cite{Ana22+} proved, using different and more involved techniques,
that Hamiltonicity can be achieved in asymptotically optimal time {\em and}
budget.
Namely, he showed that in the above statement, one could take $C=C(\eps)=1+\eps$.
He states this result as a special case
in a model that generalises both the budget model discussed here
and the so-called Achlioptas process
(see discussion in \cref{sec:related}).
We leave here our original proof due to its relative shortness and simplicity (see \cref{sec:ham:inflated}).
Note that \cref{thm:ham:hit} does not follow from the work of Anastos.

\medskip

For perfect matchings,
we show that there exists an asymptotically-optimal-time asymptotically-optimal-budget strategy.

\begin{mainthm}[Perfect matchings]\label{thm:pm}
  Suppose $n$ is even.
  For every $\eps>0$
  if $t\ge(1+\eps)n\log{n}/2$ and $b\ge(1+\eps)n/2$
  then there exists a $(t,b)$-strategy $B$ of Builder such that
  \[
    \lim_{n\to\infty}\pr(B_t\text{ has a perfect matching})=1.
  \]
\end{mainthm}

  An earlier version of this paper that appeared online
  contained a non-optimal version of \cref{thm:pm}
  (analogous to \cref{thm:ham}).
  Recently,
  Anastos~\cite{Ana22+} proved a more general version of \cref{thm:pm}
  (see remark after \cref{thm:ham}),
  using different and more involved techniques.
  The proof we provide here is more elementary.

\medskip

The next two theorems discuss optimal strategies for purchasing small subgraphs.
We resolve the problem whenever the target subgraph is a fixed tree or a cycle.
\begin{mainthm}[Small trees]\label{thm:trees}
  Let $k\ge 3$ be an integer and let $T$ be a $k$-vertex tree.
  If 
  $t\ge b\gg \max\{(n/t)^{k-2},1\}$
  then there exists a $(t,b)$-strategy $B$ of Builder such that
  \[
    \lim_{n\to\infty} \pr(T\subseteq B_t) = 1
  \]
  and if $b\ll (n/t)^{k-2}$
  then for any $(t,b)$-strategy $B$ of Builder,
  \[
    \lim_{n\to\infty} \pr(T\subseteq B_t) = 0.
  \]
\end{mainthm}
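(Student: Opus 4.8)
The plan is a phased greedy strategy. Fix an ordering $v_1,\dots,v_k$ of $V(T)$ in which each $v_i$ with $i\ge2$ has a unique neighbour $v_{\sigma(i)}$ among $v_1,\dots,v_{i-1}$, and set $T_i=T[\{v_1,\dots,v_i\}]$. Split the $t$ observations into $k-1$ equal phases. In the first phase Builder greedily purchases vertex-disjoint edges (copies of $T_2$) until he has some target number $N_2$ of them, chosen large enough that $N_2=o\!\bigl(\min\{b,t/(k-1)\}\bigr)$ and that the quantity $N_k:=N_2\alpha^{k-2}$ below tends to infinity; this is possible because $t\ge b\gg(n/t)^{k-2}$, which also forces $t\gg n^{(k-2)/(k-1)}$, i.e.\ $t\gg(n/t)^{k-2}$. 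Since, while the current family spans $o(n)$ vertices, a fresh disjoint edge is observed within $O(1)$ steps, the first phase succeeds whp within its time budget. In phase $j$ ($2\le j\le k-1$), given $N_j$ disjoint copies of $T_j$ each carrying its ``active'' vertex (the image of $v_{\sigma(j+1)}$), Builder purchases an observed edge $uw$ precisely when $u$ is the active vertex of a copy not yet extended and $w$ lies in no copy, thereby producing a copy of $T_{j+1}$; while $kN_j=o(n)$, each observation does this with probability $\approx(\text{\# not-yet-extended copies})/n$, so within the phase Builder creates $N_{j+1}:=\alpha N_j$ fresh copies, where $\alpha:=t/(4(k-1)n)$, the expected number of observations required being at most half the phase length. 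Composing, $N_k=N_2\alpha^{k-2}\to\infty$, so whp $B_t$ contains a copy of $T$, and the total purchase count is $N_2+\sum_{j\ge2}N_{j+1}\le 2N_2\le b$. The concentration needed in each phase is a Chernoff bound for a sum of independent geometric variables with uniformly bounded parameters, valid since the relevant target is $\ge N_k\to\infty$; a union bound over the $k-1$ phases finishes. (When $t=\Omega(n)$ — which happens only when $(n/t)^{k-2}=O(1)$, so $b\gg1$ already suffices — the family sizes do not shrink, are capped at $n/(2k)$, and the argument is only easier; merging the two regimes cleanly takes a little bookkeeping.)

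\textbf{Negative direction.}
Assume $b\ll(n/t)^{k-2}$; since $b\ge1$ this forces $t<n$, and if in fact $t\ll n^{(k-2)/(k-1)}$ then $\mathbb E\bigl[\#\{\text{copies of }T\text{ in }G_t\}\bigr]\asymp t^{k-1}/n^{k-2}\to0$, so whp $G_t$, and hence $B_t$, is $T$-free; so assume $n^{(k-2)/(k-1)}\lesssim t<n$ (in particular $t/n\to0$ and $b=o(t)$). The plan is to run a weighted subtree-count martingale on $(B_s)$ that charges the appearance of a copy of $T$ against the budget. For a graph $H$ on our vertex set put
\[
  \Phi(H)=\sum_{S}\Bigl(\tfrac nt\Bigr)^{|V(S)|-2}c_S(H),
\]
the sum ranging over all subtrees $S$ of $T$ on at least two vertices, with $c_S(H)$ the number of copies of $S$ in $H$; then $\Phi(\varnothing)=0$ while $\Phi(H)\ge(n/t)^{k-2}$ whenever $T\subseteq H$. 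Every new copy of a subtree $S$ created by adding an edge $uw$ splits $S$ along an edge into vertex-disjoint subtrees $S_1\ni u$ and $S_2\ni w$ (each possibly a single vertex), already present in $H$, with $|V(S)|=|V(S_1)|+|V(S_2)|$; carrying out the resulting bookkeeping, averaging over the observed edge (uniform among the $\ge N/2$ not-yet-seen edges), and using the standard subgraph-counting estimate that whp $G_t$ — hence every $B_s\subseteq G_s$ — carries $O_k(n^ap^{a-1})$ copies of any $a$-vertex tree (with $p=t/\binom n2$), one obtains that a single purchased edge increases $\mathbb E[\Phi]$ by at most $O_k(1)$ so long as $\Phi(B_{s-1})\lesssim t$. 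Since Builder purchases at most $b$ edges, a stopping-time argument then gives $\mathbb E[\Phi(B_t)]=O_k(b)$ on the relevant high-probability event, whence by Markov
\[
  \mathbb P\bigl(T\subseteq B_t\bigr)\le \mathbb P\bigl(\Phi(B_t)\ge(n/t)^{k-2}\bigr)+o(1)\le O_k\!\bigl(b\,(t/n)^{k-2}\bigr)+o(1)=o(1).
\]

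\textbf{Where the work is.}
On the constructive side the only real issue is uniformity: tuning the phase lengths, the targets $N_j$ and the initial size $N_2$ so that budget, per-phase time and concentration all hold together across the whole region $t\ge b\gg\max\{(n/t)^{k-2},1\}$, notably near $t\asymp n$. On the impossibility side the crux is to make the per-edge increment of $\mathbb E[\Phi]$ genuinely $O_k(1)$ rather than merely $O_k(1)$ \emph{per observation}: a priori one purchased edge can create many partial copies, so one must couple Builder's purchasing rule with $\Phi$ and exploit that each completed copy of a subtree ``pays'' for its final edge out of the budget; controlling the rare but large one-step jumps of $\Phi$ (of order $\Phi(B_{s-1})\cdot(n/t)$), and thereby maintaining $\Phi(B_{s-1})\lesssim t$ throughout, is the main technical point. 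The exponent $k-2$ is forced because propagating a weight through the $k-2$ leaf-peelings of $T$ multiplies by $n/t$ each time, exactly matching the factor $t/n$ gained per phase in the construction.
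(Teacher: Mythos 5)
Your positive direction is essentially the paper's proof: grow, in $k-1$ phases, many vertex-disjoint copies of the nested subtrees $T_2\subseteq\dots\subseteq T_k$, with the number of copies shrinking by a factor $\asymp t/n$ per phase, starting from $\gg(n/t)^{k-2}$ copies of an edge so that at least one copy of $T$ survives; the per-phase Chernoff analysis and the separate treatment of the $t\gg n$ regime (where $b=k-1$ suffices) match the paper, and your version is correct modulo the routine bookkeeping you mention.

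The negative direction, however, has a genuine gap at exactly the point you flag as ``the main technical point'': the claim that a single \emph{purchased} edge increases $\mathbb{E}[\Phi]$ by $O_k(1)$. Under the natural reading (conditional on the edge being purchased) the claim is false: already for $k=3$, Builder can buy a matching of size $b$ (each such purchase increments $\Phi$ by $1$) and then purchase the first observed edge touching the matching, whose increment is $\asymp n/t\gg 1$; a single jump of size $\Phi(B_{s-1})\cdot(n/t)$ can cross the target $(n/t)^{k-2}$ in one step, so no bound on $\mathbb{E}[\Phi]$ per purchase of the form $O_k(1)$ can hold conditionally. Under the other reading (averaging over the uniformly random observed edge and dropping the purchase indicator, which is what ``averaging over the observed edge'' and invoking whp subtree counts of $G_t$ actually gives), the per-observation increment is of order $\Phi/t+\Phi^2/t^2$, and summing over $t$ observations with the stopping rule $\Phi\lesssim t$ (or even $\Phi\lesssim(n/t)^{k-2}$) yields only $\mathbb{E}[\Phi(B_t)]=O(\min\{t,(n/t)^{k-2}\})$, not $O_k(b)$ --- and then Markov at threshold $(n/t)^{k-2}$ gives $O(1)$, not $o(1)$. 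The budget constraint is precisely what this averaging cannot see: the whp counts of $a$-vertex trees in $G_t$ (order $n^ap^{a-1}$) are far larger than anything Builder can hold with $b$ purchased edges, so plugging them into the increment bound erases the distinction between $b$ and $t$, which is the whole content of the $0$-statement in the regime $t\gtrsim n^{(k-2)/(k-1)}$. To close the gap you essentially need level-by-level \emph{whp} bounds rather than one aggregated expectation: this is what the paper does, proving by induction on $i$ (via Chernoff over the $t$ observations, with success probability computed from the inductively bounded smaller counts) that whp Builder's graph never has more than $r_i=b(3kt/n)^{i-2}$ components of size $i$ for $i\le k-1$, and only at the last level using a first-moment bound to show that an edge merging two components into one of size $\ge k$ is whp never observed. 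Your charging heuristic (``each completed copy pays for its final edge'') points in that direction, but as written the potential-function argument does not constitute a proof.
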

In simple (and slightly imprecise) terms,
\cref{thm:trees} states that the {\em budget threshold}
for a $k$-vertex tree (for $k\ge 3$) is $b^*=(n/t)^{k-2}$.
On a logarithmic scale (with base $n$),
this corresponds to ${\log_n}{b^*}=(k-2)(1-{\log_n}{t})$.
This, along with the bound $b\le t$, is illustrated in \cref{fig:trees}.

\begin{figure*}[t!]
  \captionsetup{width=0.879\textwidth,font=small}
  \centering
  \begin{tikzpicture}
    \begin{axis}[
        xmin=0, xmax=1.5,
        ymin=0, ymax=1,
        x=5cm,y=5cm,
        samples=100,
        axis y line=center,
        axis x line=middle,
        xlabel={${\log_n}{t}$},
        xtick={0,1/2,2/3,3/4,4/5,1},
        xticklabels={0,$\frac{1}{2}$,$\frac{2}{3}$,,$\frac{4}{5}$,1},
        ylabel={${\log_n}{b}$},
        ytick={0,1/2,2/3,3/4,4/5},
        extra y ticks={1},
        extra tick style={tick style={draw=none}},
        yticklabels={0,1/2,2/3,,4/5,1},
        every axis x label/.style={
            at={(ticklabel* cs:1.05)},
            anchor=west,
        },
        every axis y label/.style={
            at={(ticklabel* cs:1.05)},
            anchor=south,
        },
        reverse legend,
    ]
        \addplot[color=gray,loosely dotted,thick,domain=0:1] {x};

        \addplot[color=purple,loosely dashdotted,thick,domain=4/5:1] {4-4*x};

        \addplot[color=ForestGreen,densely dashed,thick,domain=3/4:1] {3-3*x};

        \addplot[color=blue,densely dotted,thick,domain=2/3:1] {2-2*x};

        \addplot[color=red,thick,domain=1/2:1] {1-x};

        \legend{$x\mapsto x$,$k=6$,$k=5$,$k=4$,$k=3$}
    \end{axis}
  \end{tikzpicture}
  \caption{Budget thresholds for $k$-vertex trees.}
  \label{fig:trees}
\end{figure*}

\begin{remark}
  We show in the proof of \cref{thm:trees} that if $t\gg n$ then, in fact,
  there exists a $(t,b)$-strategy for $b=k-1$ that succeeds \whp{}
  in building a copy of a $k$-vertex tree.
\end{remark}

\begin{mainthm}[Short cycles]\label{thm:cycles}
  Let $k\ge 1$ be an integer and let $H=C_{2k+1}$ or $H=C_{2k+2}$.
  Write $b^*=b^*(n,t,k)=\max\{n^{k+2}/t^{k+1},n/\sqrt{t}\}$.
  If $t\gg n$ and $b\gg b^*$
  then there exists a $(t,b)$-strategy $B$ of Builder such that
  \[
    \lim_{n\to\infty} \pr(H\subseteq B_t) = 1,
  \]
  and if $t\ll n$ or $b\ll b^*$
  then for any $(t,b)$-strategy $B$ of Builder,
  \[
    \lim_{n\to\infty} \pr(H\subseteq B_t) = 0.
  \]
\end{mainthm}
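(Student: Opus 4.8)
Write $\ell=|V(H)|\in\{2k+1,2k+2\}$, so $k=\floor{(\ell-1)/2}$, and set $D=2t/n$ (the average degree of $G_t$); we may assume $t=o(n^2)$, since otherwise $b^*=\Theta(1)$ and the statement is essentially trivial. The reformulation driving the lower bound is: if $H\subseteq B_t$ then some step $s\le t$ is \emph{closing}, meaning the two endpoints of the edge presented at time $s$ are joined by a path of length $\ell-1$ in $B_{s-1}$ (take the last edge of a copy of $H$ that Builder buys; the rest of that copy is a Hamilton path of $C_\ell$). Since that edge is uniform among the $\binom n2-(s-1)\ge\binom n2/2$ as-yet-unseen edges, writing $M_s$ for the number of pairs joined by a path of length $\ell-1$ in $B_s$, we get $\pr(H\subseteq B_t)\le\sum_{s\le t}\pr(\text{step $s$ closing})\le(4/n^2)\sum_{s\le t}\E[M_{s-1}]$. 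The lower bound shows this sum is $o(n^2)$ when $b\ll b^*$; the upper bound exhibits a strategy that makes it large.

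\textbf{Lower bound.} If $t\ll n$, the expected number of cycles of any length in $G_t$ is $O\bigl(\sum_{m\ge3}D^m\bigr)=o(1)$, so whp $G_t$—hence $B_t$—is a forest. Now assume $b\ll b^*$. One bound on $M_{s-1}$ is trivial: as $B_{s-1}$ has at most $b$ edges, $M_{s-1}\le\binom{|V(B_{s-1})|}2<2b^2$, which is $o(n^2/t)$ when $b\ll n/\sqrt t$. The essential bound is a statement about $G_t$ alone, which is the main technical lemma: \emph{whp, every subgraph $B\subseteq G_t$ contains at most $O_\ell\bigl(|E(B)|\,D^{k}\bigr)$ paths of length $\ell-1$}. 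Granting it, and since $B_{s-1}\subseteq G_{s-1}\subseteq G_t$ has at most $b$ edges, $M_{s-1}\le O(bD^{k})=O\bigl(b(t/n)^{k}\bigr)$, which is $o(n^2/t)$ when $b\ll n^{k+2}/t^{k+1}$. Since $b^*=\max\{n^{k+2}/t^{k+1},\,n/\sqrt t\}$, one of the two estimates always applies, so $\sum_{s\le t}M_{s-1}=o(n^2)$ whp and $\pr(H\subseteq B_t)=o(1)$. The path-counting lemma itself is proved by a first-moment/union-bound argument over ``dense witnesses'': a subgraph of $G_t$ with $b'$ edges carrying more than $Cb'D^{k}$ paths of length $\ell-1$ would, after deleting edges on no such path and passing to a minimal offending piece, force $G_t$ to contain a small configuration of too-high path-density, contradicting standard sparsity properties of $G_t$ that hold for $t\gg n\log n$ (in particular $\Delta(G_t)=O(D)$, which one only needs at scales up to $b\ll b^*\le n/D$); the exponent $k$ is sharp, the extremizers being ``double brooms''—rooted trees in which every internal vertex has $\Theta(D)$ children and all root-to-leaf paths have length $\ceil{(\ell-1)/2}$ or $\floor{(\ell-1)/2}$, which carry $\Theta(D^{k})$ length-$(\ell-1)$ paths per edge.

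\textbf{Upper bound.} The building block is a \emph{gadget}: a root $c$ with two such ``brooms'' hanging from it, of depths $\ceil{(\ell-1)/2}$ and $\floor{(\ell-1)/2}$, grown on mutually disjoint fresh vertices; if $U,W$ are the two leaf sets then for every $u\in U,\ w\in W$ the $u$–$c$ path in the first broom, the $c$–$w$ path in the second, and the edge $uw$ form a copy of $C_\ell=H$ (the brooms are internally disjoint, so no vertex repeats). Builder spends the first $t/2$ steps growing $r$ vertex-disjoint gadgets in parallel, level by level: in the equal time-slice devoted to a level he buys every presented edge from a current broom-frontier vertex to a fresh vertex, so a frontier vertex gains $\Theta\bigl(D/(k+1)\bigr)$ fresh neighbours per slice (this is where $t\gg n\log n$ is used—$D\to\infty$ with concentrated degrees) and each depth-$d$ broom reaches $\Theta(D^{d})$ leaves, or fewer if Builder stops it early. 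In the remaining $t/2$ steps he buys any presented edge between the two leaf sets of a common gadget. Each of the $\sum_s|U_s||W_s|$ candidate closing edges appears in the second half with probability $\Theta(t/n^2)$, and—as the gadgets are vertex-disjoint—these events are essentially independent, so it suffices to pick $r$ and the broom sizes with $\sum_s|U_s||W_s|\gg n^2/t$, with total spend $r(|U_s|+|W_s|)\le b$, and with total vertices used $r(|U_s|+|W_s|)=o(n)$. When $t\le n^{(2k+2)/(2k+1)}$ (the first term of $b^*$ dominating) one makes the brooms as large as the time allows, so $|U_s||W_s|=\Theta(D^{\ell-1})$, and uses $r=\Theta\bigl((n^2/t)/D^{\ell-1}\bigr)$ gadgets, purchasing $\Theta\bigl(rD^{\ceil{(\ell-1)/2}}\bigr)=\Theta(n^{k+2}/t^{k+1})$ edges; when $t\ge n^{(2k+2)/(2k+1)}$ a slowly growing number of gadgets with balanced leaf sets of size $\Theta(n/\sqrt t)$ suffices, purchasing $\Theta(n/\sqrt t)$ edges. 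In both regimes the spend is $\Theta(b^*)$, well below $b$; inflating the broom sizes (or $r$) by any $\omega(1)$ that is $o(b/b^*)$ makes the expected number of appearing closing edges tend to infinity, so whp at least one appears and $H\subseteq B_t$.

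\textbf{Main obstacle.} The crux is the path-counting lemma of the lower bound: the crude degree bound gives only $O\bigl(|E(B)|D^{\ell-2}\bigr)$ paths of length $\ell-1$ in a subgraph of $G_t$, and shaving this to the correct $O\bigl(|E(B)|D^{k}\bigr)$—recovering the decisive factor $D^{\ceil{(\ell-1)/2}-1}$—is exactly what forces one to identify the double-broom extremizers and to carry out a careful first-moment argument over all possible dense witnesses, valid for $t\gg n\log n$. A secondary point requiring verification is that in the upper bound the brooms genuinely grow to the claimed sizes within $t/2$ steps; this is the other place the hypothesis $t\gg n\log n$ enters.
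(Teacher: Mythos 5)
Your overall architecture coincides with the paper's: for the 1-statement you grow vertex-disjoint bounded-depth trees with large branching (your ``double brooms'' hanging from a common root are, up to cosmetics, the paper's $d$-ary depth-$k$ trees in the odd case, resp.\ the two depth-$k$ trees hung off the endpoints of one purchased edge in the even case), count the resulting traps, and reserve time to hit one; for the 0-statement you combine the trivial $O(b^2)$ bound on traps with a bound of order $b(t/n)^{k}$ on the number of paths of length $\ell-1$ in Builder's graph. Your parameter choices in both regimes match the paper's, and your step-by-step union bound over ``closing'' steps is an acceptable variant of the paper's last-step argument.

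The genuine gap is the ``main technical lemma'' of your lower bound, which you state but do not prove: that \whp{} every subgraph $B\subseteq G_t$ with at most $b$ edges contains $O\left(|E(B)|(t/n)^{k}\right)$ paths of length $\ell-1$. Your sketch (``first-moment/union bound over dense witnesses'', passing to a ``minimal offending piece'') is not an argument: the family of witnesses to union over is never identified, and saying which local configurations force too many paths and why $G_t$ avoids them is precisely the content of the lemma. The paper closes this with a two-step mechanism that is absent from your proposal. First, a local-sparsity statement (\cref{lem:span}): \whp{} every vertex set of size $q\le 2b$ spans at most $3q$ edges (valid since $b\ll n^{3}/t^{2}$), whence every $b$-edge subgraph of $G_t$ is $6$-degenerate (\cref{cl:degen}). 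Second, a deterministic counting lemma (\cref{lem:path:embed}): in a $z$-degenerate graph with maximum degree $\Delta$, orient each path so that at least half its edges point backwards in a degeneracy order; choosing the start vertex, the backward positions, and then at most $z$ (backward) or $\Delta$ (forward) continuations bounds the number of paths of length $\ell$ by $q\,2^{\ell}z^{\ceil{\ell/2}}\Delta^{\floor{\ell/2}}$. Combined with $\Delta(G_t)\preceq t/n$ this gives exactly the $O(b(t/n)^{k})$ bound; without this degeneracy-plus-orientation idea (or a genuine substitute) your specific lower bound does not go through, since, as you note, the naive degree bound only gives $b\Delta^{\ell-2}$. (A smaller shared caveat: the bound $\Delta(G_t)=O(t/n)$ needs $t\succeq n\log n$, so the intermediate regime $n\preceq t\preceq n\log n$ requires a separate word.) The remainder of your proposal is correct and essentially the paper's argument.
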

Again, in simple (and slightly imprecise) terms,
\cref{thm:cycles} states that the {\em budget threshold}
for an $\ell$-cycle, where $\ell=2k+1$ or $\ell=2k+2$ (for $k\ge 1$),
is $b^*=\max\{n^{k+2}/t^{k+1},n/\sqrt{t}\}$.
On a logarithmic scale (with base $n$),
this corresponds to ${\log_n}{b^*}=\max\{k+2-(k+1){\log_n}{t},1-\frac{1}{2}{\log_n}{t}\}$.
This is illustrated in \cref{fig:cycles}.
For discussion on the difficulty arising in handling graphs 
with more than one cycle see \cref{sec:conclude}.

\begin{figure*}[t]
  \captionsetup{width=0.879\textwidth,font=small}
  \centering
  \begin{tikzpicture}
    \begin{axis}[
        xmin=0, xmax=2,
        ymin=0, ymax=1,
        x=5cm,y=5cm,
        samples=100,
        axis y line=center,
        axis x line=middle,
        xlabel={${\log_n}{t}$},
        xtick={0,1,10/9,8/7,6/5,4/3},
        extra x ticks={2},
        extra tick style={tick style={draw=none}},
        xticklabels={0,$1$,,,$\frac{6}{5}$,$\frac{4}{3}$,$2$},
        ylabel={${\log_n}{b}$},
        ytick={0,1/3,1/2-1/10,1/2-1/18,1/2},
        extra y ticks={1},
        yticklabels={0,1/3,2/5,,1/2,1},
        extra tick style={tick style={draw=none}},
        every axis x label/.style={
            at={(ticklabel* cs:1.05)},
            anchor=west,
        },
        every axis y label/.style={
            at={(ticklabel* cs:1.05)},
            anchor=south,
        },
        reverse legend,
    ]
        \addplot[color=gray,loosely dotted,thick,domain=0:2] {1-x/2};

        \addplot[color=purple,loosely dashdotted,thick,domain=1:10/9] {6-5*x};
        \addplot[color=purple,loosely dashdotted,thick,domain=10/9:2] {1-x/2};

        \addplot[color=ForestGreen,densely dashed,thick,domain=1:8/7] {5-4*x};
        \addplot[color=ForestGreen,densely dashed,thick,domain=8/7:2] {1-x/2};

        \addplot[color=blue,densely dotted,thick,domain=1:6/5] {4-3*x};
        \addplot[color=blue,densely dotted,thick,domain=6/5:2] {1-x/2};

        \addplot[color=red,thick,domain=1:4/3] {3-2*x};
        \addplot[color=red,thick,domain=4/3:2] {1-x/2};

        \legend{$x\mapsto 1-x/2$,,$k=4$,,$k=3$,,$k=2$,,$k=1$}
    \end{axis}
  \end{tikzpicture}
  \caption{Budget thresholds for $C_{2k+1},C_{2k+2}$.}
  \label{fig:cycles}
\end{figure*}

\subsection{Tools and techniques}
\paragraph{Expanders}
{\em Expanders} are graphs in which (sufficiently) small sets expand.
Namely,
these are graphs in which the neighbourhood of each small set is larger than that set by a constant factor%
\footnote{We need, and hence use, a rather weak notion of expansion; see \cref{sec:boost}.}.
It is well known that connected expanders are helpful in finding Hamilton cycles (see \cref{lem:boosters});
more concretely, connected non-hamiltonian expanders have many ``boosters'', namely,
non-edges whose addition to the graph creates a graph that is either Hamiltonian
or whose longest path is longer
(for a comprehensive account, we refer the reader to~\cite{Kri16}).
Thus, expanders will play a crucial role in the proof of \cref{thm:ham:hit}
(and also in the proof of \cref{thm:pm}):
Builder will attempt to construct (sparse, and therefore cheap)
connected expanders within the random graph process.
A standard method for obtaining sparse expanders in random graphs is choosing an appropriate random (sub)graph model.
Natural candidates are discussed in the following paragraphs.

\paragraph{Random $k$-out graphs}
In the goals described in \cref{thm:mindeg:hit,thm:mindeg,thm:ham:hit,thm:pm},
Builder must achieve a certain minimum degree in his (spanning) graph.
The (standard) random graph process is quite wasteful in this regard,
as to avoid isolated vertices,
a superlinear number of edges must arrive.
Thus, a wise Builder would instead construct a much sparser subgraph of the random graph process
with the desired minimum degree.
A classical sparse graph with a given minimum degree is the {\em random regular graph} (see, e.g., in~\cite{FK}).
However, such a graph is generally very hard to construct online.
A much simpler alternative is the so-called {\em random $k$-out graph} (see, e.g., in~\cite{FK}).
In the $k$-out graph, each vertex chooses, random, independently, and without repetitions,
$k$ neighbours to connect to.
Thus, the total number of edges in a $k$-out graph is at most $kn$, and the minimum degree is at least $k$.
Unfortunately, $k$-out graphs are not generally subgraphs of the random graph process
at the hitting time for minimum degree $k$.
We will therefore analyse a different model,
which can be considered the undirected counterpart of the $k$-out graph (see the next paragraph).
Nevertheless, we will exploit the simplicity of the $k$-out graph to analyse its (slightly more complicated)
undirected counterpart.

\paragraph{Random $k$-nearest neighbour graphs}
Suppose the edges of the complete graph are endowed with
(potentially random; distinct) ``lengths''.
The graph made up only of the $k$ shortest edges incident with each vertex
is called the {\em (random) $k$-nearest neighbour graph},
and has been studied in various, primarily geometric, contexts
(see, e.g.,~\cite{EPY97}).
When the weights are independent uniform random variables supported on $[0,1]$,
the model becomes a symmetric random graph\footnote{%
The choice of uniform distribution supported on $[0,1]$ is arbitrary;
any distribution without atoms would yield the same random graph.},
which we denote by~$\cO_k$ (see \cref{sec:ok}).
To prove \cref{thm:mindeg:hit}, we devise a simple strategy that emulates $\cO_k$.
Since the minimum degree of $\cO_k$ is obviously $k$,
the statement would follow from a theorem of Cooper and Frieze~\cite{CF95}
according to which the number of edges of $\cO_k$ is at most $cn$
for some concrete constant $c=c(k)<k$
which is roughly $k/2$ when $k$ is large
(see \cref{thm:ok:size,cor:ok:size}).

By observing that $\cO_k$ is stochastically dominated by the random $k$-out graph (\cref{obs:ok:kout}),
we prove that it is a connected (\cref{cor:ok:conn}) expander (\cref{cor:ok:exp}),
and use that to prove that it is Hamiltonian (\cref{thm:ok:ham,thm:ok:ham:hit}).
We use that helpful fact in several places:
when aiming to create a Hamilton cycle at the hitting time and with inflated budget,
we construct a subgraph of $\cO_k$ that is already Hamiltonian;
when creating a Hamilton cycle under an asymptotically optimal budget, we emulate copies of $\cO_k$
in small parts of the graph, to allow us to {\em absorb} a long path into a Hamilton cycle;
finally, we use a similar approach for the construction of perfect matchings.

\paragraph{High-level arguments for the containment of fixed graphs}
We give brief proof outlines for \cref{thm:trees,thm:cycles}.
The proof for the upper bound on the budget threshold for trees is essentially inductive:
given a fixed $k$-vertex tree $T$, we let $T'$ be obtained from $T$ by removing a leaf.
The inductive argument is that Builder can construct sufficiently many copies of $T'$
while leaving enough time to extend some of them into copies of $T$.
The proof for the lower bound is based on a similar idea:
we show that Builder cannot construct a connected component of size at least $k$.
In order to construct such a component, he needs to construct enough smaller components
so that in the remaining time, he will, \whp{}, connect two of them
so that the resulting component will be large enough.

The strategy for obtaining a cycle of length $\ell=2k+1$ goes through the construction of many ``traps'',
namely, non-edges whose addition to Builder's graph would create the desired cycle.
An optimised way to construct many such traps quickly is by constructing
$r$ $d$-ary trees of depth $k$ (for a correct choice of $r,d$).
If $d$ is large, most pairs of leaves in such a tree are connected by a path of length $2k$,
and thus form a trap.
The argument for a cycle of length $\ell=2k+2$ is similar.
We complement the upper bound with two lower bounds;
each matches the upper bound in a different regime.
We first show a ``universal'' lower bound, based on the straightforward observation that the number of traps
Builder has in his graph is bounded by $O(b^2)$,
as he has at most $2b$ vertices of positive degree in his graph.
A more involved lower bound fits the earlier regimes (lower values of $t$).
To this aim, we observe that Builder's graph is typically $\Theta(1)$-degenerate.
Then, we use this observation,
together with an estimate on the number of paths of a fixed length that contain a vertex
of ``high'' degree,
to bound from above the number of paths of length $\ell$ (and hence the number of traps).

\subsection{Related work}\label{sec:related}
The study of random graph processes is at the heart of the theory of random graphs,
providing a dynamic point of view on their evolution.
One of the main questions is to determine the thresholds of monotone increasing graph properties
or their (more refined) hitting times.
Classical results of this sort include the thresholds of minimum degree and vertex-connectivity $k$~\cite{ER61},
the appearance of a ``giant'' component~\cite{ER60},
Hamiltonicity~\cites{KS83,Bol84}
and the containment of fixed subgraphs~\cites{ER60,Bol81}.
For a comprehensive coverage of the topic, we refer the reader to
the books of Bollob\'as~\cite{Bol}, Janson, {\L}uczak and Ruci\'nski~\cite{JLR}, and
Frieze and Karo\'nski~\cite{FK}.
In our context,
an obvious necessary condition for the existence of a winning strategy for Builder
is that the ``time'' is above the threshold (or at least at the hitting time),
as otherwise, the underlying graph process is not guaranteed to have the desired property \whp{}.
Evidently, if $b=t$ (namely, if the budget equals the time),
and $t$ is (sufficiently) above the threshold of the target property,
then Builder has a winning strategy: the naive one that purchases each observed edge.
Since in many cases Builder can do better,
the model can be seen as an extension of the ``standard'' random graph process.

In the last couple of decades,
partly inspired by the remarkable work of Azar, Broder, Karlin and Upfal~\cite{ABKU99}
on {\em balanced allocations},
there has been a growing interest in {\em controlled} random processes.
In the context of graph processes,
an algorithm is provided with a random flow of edges (usually, but not always, the random graph process)
and with (the {\em offline} version)
or without (the {\em online} version)
``peeking into the future'', makes a decision that handles that flow
by accepting/rejecting edges, by colouring them, or by other means.
We mention several related models that fall into this category, mainly in the online version.

The {\em Achlioptas process},
proposed by Dimitris Achlioptas in 2000,
is perhaps the most studied controlled random graph process.
In this process, the algorithm is fed by a stream of random $k$-sets of edges (with or without repetitions)
and should pick, immediately and irrevocably (``online''), a single edge to accept, rejecting the others.
The algorithm's goal is to make the graph of accepted edges satisfy some monotone
increasing (decreasing) graph property
while minimising (maximising) the total number of rounds.
Early work on this model~\cites{BF01,BF01add,BFW04} treated the original question posed by Achlioptas
of {\em avoiding} a giant component for as long as possible.
Other works~\cites{FGS05,BK06} considered the opposite of the original question,
namely, the question of accelerating the appearance of the giant (see also \cite{SW07} for a general framework).
The model has also been studied for other objectives,
such as
avoiding~\cites{KLS09,MST11} or creating~\cites{KS12} a fixed subgraph
or obtaining a Hamilton cycle~\cite{KLS10}.
Several variants have also been studied, such as an {\em offline} version,
in which the algorithm sees all $k$-sets of edges at the beginning of the process
(see, e.g.,~\cite{BK06off});
and a {\em memoryless} version,
in which the algorithm's decision may not depend on its previous decisions
(see~\cite{BBFP09}).
A common generalisation of the Achlioptas process
and the process considered in this paper,
where an algorithm should pick {\em at most} a single edge to accept/purchase
subject to a restricted ``budget'',
was recently studied by Anastos in~\cite{Ana22+}
(see remarks after \cref{thm:ham,thm:ham:hit,thm:pm}).

The {\em semi-random process},
proposed by the third author in 2016,
is a variation of the Achlioptas process
in which
the algorithm is fed by a stream of random spanning stars (with repetitions)
instead of $k$-sets of edges.
As with the Achlioptas process, the algorithm must immediately and irrevocably pick
a single edge to accept.
Work on this model treats monotone properties such as
containment of fixed subgraphs~\cites{BHKPSS20,BMPR22+},
minimum degree and vertex-connectivity~\cites{BHKPSS20},
perfect matchings~\cite{GMP22+pm},
Hamilton cycles~\cites{GKMP22,GMP22+ham}
and bounded degree spanning graphs~\cite{BGHK20}.

Another model that fits in this setting was studied by Frieze and Pegden~\cites{FP18,FP18cor}
and by Anastos~\cite{Ana16+}
under the name ``purchasing under uncertainty''.
In their model, whenever an edge arrives, it is given an independent random ``cost'',
and the algorithm has to decide whether to purchase that edge,
aiming to pay the minimum total cost required to obtain a desired graph property.

In a Ramsey-type version of controlled random processes,
incoming edges are coloured (irrevocably) by an online algorithm.
The algorithm aims to avoid, or to create, a monochromatic property.
In~\cite{FKRRT03}, the triangle-avoidance game for up to $3$ colours is discussed.
This was extended to any fixed cyclic graph and any number of colours in~\cites{MSS09,MSS09ub}.
The model was further studied in the contexts of the giant component~\cites{BFKLS11,SST10}
and Hamilton cycles~\cite{BFKLS18}.

Finally, we would like to mention a related adaptation of the {\em two-stage optimisation with recourse} framework
for the {\em minimum spanning tree} model~\cite{FFK06}.
Here, every edge of the complete graph is given an independent random ``Monday'' cost
and another independent random ``Tuesday'' cost.
The algorithm sees all Monday costs and decides (immediately and irrevocably) which edges to incorporate into its (future) spanning tree.
Then, Tuesday costs are revealed, and the algorithm uses them to complete its (current) forest into a spanning tree.
The algorithm's goal is to minimise the total cost of edges in its constructed tree.

\subsection{Paper organisation and notation}

The rest of the paper is organised as follows.
In \cref{sec:prelim} we introduce some preliminaries.
\Cref{sec:ok} is devoted to the random $k$-nearest neighbours graph and
to the $k$-out graph.
\Cref{sec:span} contains the proofs of \cref{thm:mindeg:hit,thm:mindeg,thm:ham,thm:ham:hit,thm:pm},
and \cref{sec:small} contains the proofs of \cref{thm:trees,thm:cycles}.
In the final section, \cref{sec:conclude}, we mention a few relevant open problems.

Throughout the paper, all logarithms are in the natural basis, unless stated otherwise.
If $f,g$ are functions of $n$ we write $f\preceq g$ if $f=O(g)$,
$f\ll g$ if $f=o(g)$,
$f\asymp g$ if $f=\Theta(g)$,
and $f\sim g$ if $f=(1+o(1))g$.
For two vertices $u,v$ of a graph we write $u\sim v$ to denote that they are neighbours.
For simplicity and clarity of presentation,
we often make no particular effort to optimise the constants obtained in our proofs
and omit floor and ceiling signs whenever they are not crucial.

\section{Preliminaries}\label{sec:prelim}
\subsection{Concentration inequalities}

We will make use of the following version of Chernoff bounds (see, e.g., in,~\cite{JLR}*{Chapter 2}).
\begin{theorem}[Chernoff bounds]\label{thm:chernoff}
  Let $n\ge 1$ be an integer and let $p\in[0,1]$,
  let $x\sim\Bin(n,p)$, and let $\mu=\E{x}=np$.
  Then, for every $a>0$,
  \[
    \pr(x\le \mu-a) \le \exp\left(-\frac{a^2}{2\mu}\right),\qquad
    \pr(x\ge \mu+a) \le \exp\left(-\frac{a^2}{2(\mu+a/3)}\right).
  \]
\end{theorem}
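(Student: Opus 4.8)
The plan is to run the standard exponential-moment (Bernstein--Chernoff) argument. Write $x=\sum_{i=1}^n x_i$ with the $x_i$ independent $\Bernoulli(p)$ variables. For any real $\lambda$ we have $\E e^{\lambda x_i}=1-p+pe^{\lambda}\le\exp\!\big(p(e^{\lambda}-1)\big)$ (using $1+y\le e^{y}$), and hence, by independence, $\E e^{\lambda x}\le\exp\!\big(\mu(e^{\lambda}-1)\big)$. Everything else is an optimisation of this one bound.

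For the upper tail I would fix $\lambda>0$ and apply Markov's inequality to $e^{\lambda x}$:
\[
  \pr(x\ge\mu+a)\le e^{-\lambda(\mu+a)}\,\E e^{\lambda x}\le\exp\!\big(\mu(e^{\lambda}-1)-\lambda(\mu+a)\big).
\]
Minimising the exponent over $\lambda>0$ is done by taking $\lambda=\log(1+a/\mu)$, which turns the right-hand side into $\exp\!\big(-\mu\,\phi_+(a/\mu)\big)$ with $\phi_+(t)=(1+t)\log(1+t)-t$. The proof is then finished by the elementary inequality $\phi_+(t)\ge \dfrac{t^2}{2(1+t/3)}$ for $t\ge0$, which gives $\mu\,\phi_+(a/\mu)\ge\dfrac{a^2}{2(\mu+a/3)}$.

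For the lower tail the argument is symmetric. If $a\ge\mu$ the bound is immediate (the left-hand side is $0$ unless $a=\mu$, in which case it equals $(1-p)^n\le e^{-\mu}\le e^{-a^2/(2\mu)}$), so assume $a<\mu$. Fix $\lambda>0$, apply Markov to $e^{-\lambda x}$, use $\E e^{-\lambda x}\le\exp\!\big(\mu(e^{-\lambda}-1)\big)$, and minimise the exponent by taking $\lambda=-\log(1-a/\mu)>0$. This yields $\pr(x\le\mu-a)\le\exp\!\big(-\mu\,\phi_-(a/\mu)\big)$ with $\phi_-(t)=(1-t)\log(1-t)+t$, and the inequality $\phi_-(t)\ge t^2/2$ on $[0,1)$ gives the claimed $\exp\!\big(-a^2/(2\mu)\big)$.

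The only genuine work is verifying the two one-variable inequalities for $\phi_\pm$; I expect this to be the sole slightly fiddly point, as the rest is the textbook computation. The cleanest route is calculus: for $\phi_-$ one checks $\phi_-(0)=\phi_-'(0)=0$ and $\phi_-''(t)=\tfrac{1}{1-t}\ge1$; for $\phi_+$ one sets $g(t)=2(1+t/3)\phi_+(t)-t^2$ and verifies $g(0)=g'(0)=g''(0)=0$ together with $g'''(t)\ge0$ (equivalently, one compares the relevant Taylor coefficients). Since this is an entirely standard estimate, one may instead simply invoke it as in~\cite{JLR}*{Chapter 2}.
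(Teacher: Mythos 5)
Your proof is correct: the exponential-moment (Markov plus $\E e^{\lambda x}\le\exp(\mu(e^{\lambda}-1))$) optimisation, followed by the elementary bounds $\phi_+(t)\ge\frac{t^2}{2(1+t/3)}$ and $\phi_-(t)\ge t^2/2$, is exactly the standard derivation, and your handling of the boundary case $a\ge\mu$ in the lower tail and the calculus verifications of the two one-variable inequalities all check out. The paper does not prove this statement but simply quotes it from the cited reference (Janson, {\L}uczak and Ruci\'nski, Chapter 2), where the argument given is the same one you propose, so there is nothing further to reconcile.
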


The next proposition is often helpful in combination with \cref{thm:chernoff}.
\begin{proposition}\label{prop:chernoff}
  Let $n\ge 1$ be an integer and let $p\in[0,1]$,
  write $\mu=np$,
  and let $x_1,\dots,x_n$ be (not necessarily independent) Bernoulli random variables.
  Suppose further that for every $(y_1,\dots,y_n)\in\{0,1\}^n$
  and for every $i=1,\dots,n$,
  $\pr(x_i=1\mid x_j=y_j,\ \forall 1\le j<i)\ge p$.
  Then,
  $x=\sum_{i=1}^n x_i$ stochastically dominates a binomial random variable
  with $n$ attempts and success probability $p$.

  Similarly, if for every $(y_1,\dots,y_n)\in\{0,1\}^n$
  and for every $i=1,\dots,n$,
  $\pr(x_i=1\mid x_j=y_j,\ \forall 1\le j<i)\le p$,
  then
  $x$ is stochastically dominated by a binomial random variable
  with $n$ attempts and success probability $p$.
\end{proposition}

The following are trivial yet useful bounds.
\begin{claim}\label{cl:bin:lowtail}
  Let $n\ge 1$ be an integer and let $p\in[0,\frac{1}{2}]$,
  and let $x\sim\Bin(n,p)$.
  Write $q=1-p$ and let $1\le k\le np/q$ be an integer.
  Then
  \begin{equation*}
    \pr(x\le k) \le \left(\frac{enp}{kq}\right)^k e^{-np}.
  \end{equation*}
\end{claim}

\begin{proof}
  By the binomial theorem, for every $\alpha\in(0,p/q]$,
  \[ 
    (1+\alpha)^n
    = \sum_{i=0}^n \binom{n}{i}\alpha^i
    \ge \sum_{i=0}^k
    \binom{n}{i}
      \left(\frac{p}{q}\right)^i
      \left(\frac{\alpha q}{p}\right)^k,
  \]
  hence
  \[
    \pr(x\le k)
    = \sum_{i=0}^k \binom{n}{i}\left(\frac{p}{q}\right)^i q^n
    \le \frac{(1+\alpha)^n p^k}{\alpha^kq^k} \cdot (1-p)^n
    \le \frac{e^{\alpha n} p^k}{\alpha^kq^k} \cdot e^{-np}.
  \]
  Taking $\alpha=k/n\le p/q$ we obtain
  \[
    \pr(x\le k)
    \le \left(\frac{enp}{kq}\right)^k \cdot e^{-np}.\qedhere
  \]
\end{proof}

\subsection{The random graph process}\label{sec:gnm}
Recall that $G_t$ denotes the random graph process at time $t$.
The next (pretty standard) lemma essentially states that in $G_t$, where $t$ is superlinear (but not too large),
there are no small dense sets.
\begin{lemma}\label{lem:span}
  Suppose that $R\gg 1$ and $n^{-5}R^{2}t^3\le\alpha$ for some constant $\alpha<1/8$.
  Then,
  in $G_t$, \whp{}, every vertex set of size $r\le R$ spans at most $3r$ edges.
\end{lemma}

\begin{proof}
  Let $N=\binom{n}{2}$ and $p=t/N\sim 2tn^{-2}$.
  For a set $S$ of $s$ edges
  the probability that $S$ is contained in $G_t$
  is at most $p^s$.
  By the union bound, the probability that there exists a set $U$ of size $r$ that spans at least $3r$ edges is at most
  \begin{equation*}
    \binom{n}{r}\binom{\binom{r}{2}}{3r}p^{3r}
    \le \left(n r^2p^3\right)^r.
  \end{equation*}
  We note that $nR^2p^3\sim 8n^{-5}R^2t^3\le 8\alpha$,
  and hence $nRp^3\sim 8\alpha/R\ll 1$.
  By the union bound over $2\le r\le R$,
  the probability that there exists a set violating the promise of the lemma
  is at most
  \[
    \sum_{r=2}^R \left(n r^2p^3\right)^r
    \le \sum_{r=2}^{\sqrt{R}} \left(nRp^3\right)^r
    + \sum_{r=\sqrt{R}}^{R} \left(nR^2p^3\right)^r
    \le \sum_{r=2}^{\sqrt{R}} (o(1))^r
    + \sum_{r=\sqrt{R}}^R ((1+o(1))8\alpha)^r = o(1),
  \]
  (here the $o(1)$ terms in the penultimate expression are uniform in $r$)
  and the claim follows.
\end{proof}

The lemma easily implies the following useful fact.
\begin{claim}\label{cl:degen}
  Suppose $1\ll b^2\le n^5/(40t^3)$.
  Then, \whp{},
  every subgraph of $G_t$ with at most $b$ edges is $6$-degenerate.
\end{claim}

\begin{proof}
  Let $B$ be a subgraph of $G_t$ with at most $b$ edges.
  Thus, $B$ has at most $2b$ non-isolated vertices.
  Apply \cref{lem:span} with $R=2b$ and $t$
  (we can do so as $b\gg 1$ and $n^{-5}(2b)^2t^3\le 1/10$).
  We therefore obtain that, with high probability, for every $q\ge 0$,
  every subgraph of $B$ on $q$ vertices spans at most $3q$ edges,
  and hence has minimum degree of at most $6$,
  and the result follows.
\end{proof}

We introduce more notation.
For $d\ge 1$, let $X_d^t$ be the set of vertices in $G_t$ with degree less than $d$,
and let $Y_d^t$ be its complement.
The next lemmas state that in time linear in $n$ there is a transition
between ``most vertices are of degree less than $d$''
and ``most vertices are of degree at least $d$''.

\begin{lemma}\label{lem:gnt:large}
  Let $d\ge 1$ be an integer.
  Then, deterministically,
  $|Y_d^t|\le 2t/d$.
\end{lemma}

\begin{proof}
  It follows from $t=|E(G_t)|\ge |Y_d^t|\cdot d/2$.
\end{proof}

\begin{lemma}\label{lem:gnt:small}
  For every integer $d\ge 1$,
  if $t\ge 6dn$
  then $|X_d^t|\le n/100$ \whp{}.
\end{lemma}

\begin{remark}
  When proving statements about $G_t$
  it is often convenient to prove them for $G\sim G(n,p)$ with $p\sim t/\binom{n}{2}$ instead.
  We could then translate the results back to $G_t$ using standard methods
  (see, e.g., in~\cite{FK}).
\end{remark}

\begin{proof}
  Let $t=6dn$ 
  (the result would follow for $t\ge 6dn$ due to monotonicity).
  Write $p=12d/n\sim t/\binom{n}{2}$
  and $m=n/100$.
  If $|X_d^t|>m$ then there exists a vertex set $V_0$ with $|V_0|= m$
  such that $|E(V_0,\overline{V_0})|<md$.
  Let $x=|E(V_0,\overline{V_0})|$ and note that $x\sim\Bin(m(n-m),p)$.
  By \cref{cl:bin:lowtail} and the union bound over the choice of $V_0$,
  the probability that the statement of the lemma does not hold is at most
  \begin{align*}
    \binom{n}{m}\pr(x<md)
    &\le \left(\frac{en}{m}\right)^m \left(\frac{em(n-m)p}{md(1-p)}\right)^{md} e^{-m(n-m)p}\\
    &\le \left(\frac{en}{m}\right)^m \left(\frac{enp}{d}\right)^{md} e^{-0.99mnp}
    \le \left(100e\cdot\left(12e\cdot e^{-11}\right)^d\right)^m  = o(1),
  \end{align*}
  where in the last computation we used the fact that $e^{10}>1200$.
\end{proof}

Recall that for every positive integer $k$,
$\tau_k$ is the hitting time for minimum degree $k$ in the random graph process.
We will use the next two known results about $\tau_k$.

\begin{lemma}[\cite{ER61}]\label{lem:gnt:deg:hit}
  For every fixed $k\ge 1$,
  $\tau_k = n(\log{n}+(k-1)\log\log{n}+x)/2$,
  where $x=O(1)$ \whp{}.
\end{lemma}

For a proof of the next lemma see, e.g.,~\cite{Kri16}.
\begin{lemma}\label{lem:gnt:small:far}
  Let $k\ge 1$ be an integer.
  Then, \whp{}, $|X^{\tau_2}_k|\le n^{0.5}$,
  the distance between two vertices in $X^{\tau_2}_k$ is at least $5$,
  and no vertex in $X^{\tau_2}_k$ lies in a cycle of length at most $4$.
\end{lemma}

Finally, we would need the following lemma on the size of the $k$-core of a random graph.
The \defn{$k$-core} of a graph is its (unique) maximal subgraph of minimum
degree at least $k$.
\begin{lemma}\label{lem:gnt:core}
  For every $k\ge 1$ the size of the $k$-core of $G_t$, for $t=8kn$, is \whp{} at least $n/2$.
\end{lemma}

\begin{proof}
  We prove the statement for $G(n,p)$ with $p=16 k/n\sim t/\binom{n}{2}$.
  First we show that \whp{} the number of edges between any set of size $n/2$ and its complement
  is at least $kn$.
  Indeed, let $V_0$ be a fixed vertex set of size $n/2$.
  Then, $x=|E(V_0,\overline{V_0})|\sim\Bin\left(n^2/4,p\right)$,
  hence $\E{x}=4kn$.
  By Chernoff bounds (\cref{thm:chernoff}),
  $\pr(x\le kn) \le \exp(-\frac{9}{8}kn)$.
  By the union bound over all choices of $V_0$, the probability that there exists such a set is at most
  $2^n\cdot\exp(-\frac{9}{8}kn)=o(1)$.

  Now, suppose the $k$-core is smaller than $n/2$.
  Consider the process of removing vertices of degree smaller than $k$ one by one.
  By definition of the $k$-core, this process lasts more than $n/2$ steps.
  Stop the process exactly after $n/2$ steps.
  At this point,
  the remaining set $|V_0|$ is of size $n/2$.
  On the other hand, we have removed less than $k\cdot n/2$ edges,
  hence $|E(V_0,\overline{V_0})|<kn$,
  and we have seen that \whp{} there is no such set.
\end{proof}

\subsection{Rotations, expanders, and boosters}\label{sec:boost}
In our proofs we shall repeatedly use the so-called
{\em rotation--extension} technique of P\'osa~\cite{Pos76}.
Given a longest path $P=(v_0,\ldots,v_j)$
we say that $P'$ obtained from $P$ by an \defn{elementary rotation} of $P$ (with $v_0$ fixed)
if $P'=(v_0,\ldots,v_{i-1},v_i,v_j,v_{j-1},\ldots,v_{i+1})$ ($i<j$).
We let $\cR(P)$ denote the set of endpoints of paths
obtained from $P$ by a (finite) sequence of elementary rotations.
For a vertex set $A$, denote by $N(A)$ the {\em external} neighbourhood of $A$,
that is, the set of all vertices which are not in $A$ that have a neighbour in $A$.
We will use the following classical lemma of P\'osa:
\begin{lemma}[P\'{o}sa's lemma~\cite{Pos76}]\label{lem:Posa}
  Let $G$ be a graph and let $P$ be a longest path in $G$.
  Then $|N(\cR(P))| \leq 2|\cR(P)|-1$.
\end{lemma}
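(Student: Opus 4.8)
This is the classical rotation lemma of P\'osa, and I would reprove it directly from the definition of an elementary rotation. Fix a longest path $P=(v_0,\ldots,v_t)$ and keep $v_0$ as the fixed endpoint, so that $\cR(P)$ is the set of vertices occurring as the right endpoint of a longest path obtained from $P$ by finitely many elementary rotations. Since an elementary rotation changes neither the vertex set nor the length of the path, every path reachable from $P$ in this way is again a longest path with vertex set exactly $V(P)$. Hence $\cR(P)\subseteq V(P)$; and, crucially, for every $v\in\cR(P)$ all neighbours of $v$ in $G$ lie on $V(P)$, since otherwise such a neighbour could be appended to a longest path ending at $v$, contradicting maximality. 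Therefore $N_G(\cR(P))\subseteq V(P)$, and I will henceforth view both $\cR(P)$ and $N_G(\cR(P))$ as sets of positions along the linear order $v_0,v_1,\ldots,v_t$.

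The heart of the argument is to charge each vertex of $N_G(\cR(P))$ either to itself (if it already lies in $\cR(P)$) or, injectively, to a distinct element of $\cR(P)$. For each $v\in\cR(P)$ fix one realising path $P_v=(v_0=w_0,w_1,\ldots,w_\ell=v)$. If $u\in N_G(v)$ then $u=w_j$ for some $j\le\ell-1$, and applying an elementary rotation to $P_v$ across the edge $uv=w_jw_\ell$ produces a longest path ending at $w_{j+1}$; hence the $P_v$-successor of $u$ belongs to $\cR(P)$ (the case $j=\ell-1$ being the trivial one, in which that successor is $v$ itself). Thus every $u\in N_G(\cR(P))\setminus\cR(P)$ is the immediate predecessor, on some realising path, of a member of $\cR(P)$. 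To convert this into the numeric bound I would organise the realising paths by a breadth-first exploration of the rotation process rooted at $P$ — P\'osa's original bookkeeping — so that ``predecessor of an element of $\cR(P)$'' becomes a well-defined injective map from $N_G(\cR(P))\setminus\cR(P)$ into $\cR(P)$, and so that at least one element of $\cR(P)$ (ultimately traceable to the original far endpoint $v_t$, equivalently to the fixed end $v_0\notin\cR(P)$) is never used. This yields $|N_G(\cR(P))\setminus\cR(P)|\le|\cR(P)|-1$ and hence $|N_G(\cR(P))|\le 2|\cR(P)|-1$.

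The step I expect to be the real obstacle is exactly this last bookkeeping. Because distinct members of $\cR(P)$ are witnessed by paths $P_v$ with different vertex orderings, ``predecessor along a path'' is not a single global function on $V(P)$, so some care is needed to check that the charging is genuinely injective and leaves a slack of one. I would resolve this precisely as in the classical treatment: process rotations level by level, commit to a canonical realising path for each endpoint as soon as it first appears, and verify that the vertex newly entering $\cR(P)$ at each rotation is charged a predecessor not used before, with the root path $P$ accounting for the extra unit.
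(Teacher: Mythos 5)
The paper itself gives no proof of this lemma (it is quoted directly from P\'osa), so your attempt has to be measured against the classical rotation argument. Your opening steps are correct: every neighbour of $\cR(P)$ lies on $V(P)$, and if $u$ is adjacent to some $v\in\cR(P)$ with realising path $P_v$, then the $P_v$-successor of $u$ again lies in $\cR(P)$. But, as you yourself note, this is too weak to count with, because ``successor'' depends on the realising path: many different vertices $u$ can be charged to the same element of $\cR(P)$ via different paths, so no injection follows. The fix you sketch does not close this gap. Your bookkeeping (breadth-first over rotations, one canonical realising path per endpoint, ``the vertex newly entering $\cR(P)$ is charged a predecessor not used before'') produces a map from newly discovered endpoints to their discovery predecessors; what you need, however, is that \emph{every} $u\in N(\cR(P))\setminus\cR(P)$ occurs as such a discovery predecessor, and nothing in your plan guarantees this. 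A vertex $u$ adjacent to some endpoint $y\in\cR(P)$ need never be charged: the endpoint that the rotation of $y$'s canonical path through $u$ would create may already have been discovered earlier by a different rotation not involving $u$. So the inequality $|N(\cR(P))\setminus\cR(P)|\le|\cR(P)|-1$ is asserted but not proved, and this per-path charging is in fact not how the classical treatment proceeds.

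The classical proof replaces the charging by an invariance statement about the \emph{original} path order. Write $P=(v_0,\dots,v_t)$ with $v_0$ fixed. Key claim: if none of $v_{i-1},v_i,v_{i+1}$ lies in $\cR(P)$, then in every path obtainable from $P$ by rotations the vertex $v_i$ still has exactly $v_{i-1}$ and $v_{i+1}$ as its path-neighbours. This is an easy induction on the number of rotations: a single rotation changes the path-neighbourhood of only three vertices, namely the old endpoint, the new endpoint (both in $\cR(P)$) and the pivot, and the pivot loses precisely the new endpoint as a neighbour, which by the induction hypothesis would have to be $v_{i-1}$ or $v_{i+1}$, contrary to assumption. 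Given the claim, such a $v_i$ cannot be adjacent to any element of $\cR(P)$: rotating the corresponding path with that edge would make $v_{i-1}$ or $v_{i+1}$ an endpoint. Hence $N(\cR(P))\subseteq \cR(P)^{-}\cup\cR(P)^{+}$, where the shifts are taken along the original $P$; since $v_0\notin\cR(P)$ and $v_t\in\cR(P)$, we get $|\cR(P)^{-}|\le|\cR(P)|$ and $|\cR(P)^{+}|\le|\cR(P)|-1$, i.e.\ $|N(\cR(P))|\le 2|\cR(P)|-1$. If you rewrite your argument around this invariance claim instead of canonical realising paths, it becomes complete.
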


Say that a graph $G=(V,E)$
is an \defn{$R$-expander} if every set $U\subseteq V$ with $|U|\le R$ has $|N(U)|\ge 2|U|$.
For an $n$-vertex graph $G$ denote by $\lambda(G)$ the length of a longest path in $G$,
or $n$ if $G$ is Hamiltonian.
A non-edge $e$ of $G$ is called a \defn{booster}
if $\lambda(G+e)\ge\min\{\lambda(G)+1,n\}$.
The following lemma is a corollary of \cref{lem:Posa}.
For a proof see, e.g.,~\cite{Kri16}.
\begin{lemma}\label{lem:boosters}
  Let $G$ be a connected $R$-expander which contains no Hamilton cycle.
  Then $G$ has at least $(R+1)^2/2$ boosters. 
\end{lemma}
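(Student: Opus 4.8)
The plan is to prove \cref{lem:boosters} by the classical rotation--extension technique of P\'osa. Since $G$ is connected and not Hamiltonian, fix a longest path $P = v_0v_1\cdots v_\ell$ in $G$; by maximality of $P$, every neighbour of each of its two endpoints lies on $P$, which is exactly what makes rotations available.

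The first step is to invoke P\'osa's rotation lemma: if one repeatedly performs rotations that keep the endpoint $v_0$ fixed, then the set $X$ of all vertices that occur as the moving endpoint of some resulting longest path satisfies $|N_G(X)| < 2|X|$. Combined with the $R$-expansion hypothesis, which asserts $|N_G(U)|\ge 2|U|$ whenever $|U|\le R$, this forces $|X|\ge R+1$: were $|X|$ at most $R$, the expansion bound and P\'osa's inequality would contradict one another.

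Next I would iterate the rotation argument from the other end. For each $x\in X$ pick a longest path $Q_x$ with endpoints $v_0$ and $x$, and rotate $Q_x$ keeping $x$ fixed; since $Q_x$ is again a longest path, the same lemma yields a set $Z_x$ of reachable endpoints with $|Z_x|\ge R+1$, and trivially $v_0\in Z_x$. For every $x\in X$ and $z\in Z_x$ there is a longest path $L_{x,z}$ from $x$ to $z$ on the same $\ell+1$ vertices. The key claim is that each such pair $\{x,z\}$ is a non-edge of $G$ and, in fact, a booster. Indeed, if $xz$ were an edge then $L_{x,z}+xz$ would be a cycle $C$ on $\ell+1$ vertices; if $C$ spans $V(G)$ then $G$ is Hamiltonian, a contradiction, and otherwise connectivity provides an edge from $V(C)$ to the rest of $G$, producing a path on $\ell+2$ vertices inside $G$ and contradicting the maximality of $P$. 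Hence $xz\notin E(G)$, and the very same dichotomy applied to $G+xz$ shows that adding $xz$ either makes $G$ Hamiltonian or creates a path longer than $P$, so $\{x,z\}$ is a booster.

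Finally I would count: the number of ordered pairs $(x,z)$ with $x\in X$ and $z\in Z_x$ is at least $(R+1)^2$, and a given unordered booster $\{a,b\}$ is obtained this way at most twice (once with $x=a$, once with $x=b$), so $G$ has at least $(R+1)^2/2$ boosters. The main obstacle is P\'osa's rotation lemma itself --- proving that the rotation-endpoint set satisfies $|N_G(X)|<2|X|$ --- which needs a careful analysis of how successive rotations generate new endpoints and why the boundary of $X$ stays small; I would either reproduce this standard argument or simply cite it (see~\cite{Kri16}). The remaining steps are routine, the only points needing care being that each $Q_x$ is itself a longest path so the lemma applies verbatim, and that connectivity is genuinely used to exclude short cycles.
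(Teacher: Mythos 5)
Your proposal is correct and is exactly the standard rotation--extension argument: the paper itself does not prove \cref{lem:boosters} but cites~\cite{Kri16}, where this is the proof given (Pósa's lemma plus expansion forces at least $R+1$ reachable endpoints from each end, each resulting endpoint pair is a non-edge and a booster by the connectivity/maximality dichotomy, and the double count gives $(R+1)^2/2$). Relying on \cref{lem:Posa} as a cited black box is consistent with the paper's own usage, so there is no gap.
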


In our arguments below,
we combine \cref{lem:boosters} with the {\em sprinkling} technique,
standard for many Hamiltonicity arguments in random graphs.
We begin with an
expander, and add random edges.
Typically, the first few random edges turn it into a connected graph
(if it was not connected to begin with),
and then the remaining random edges hit boosters
(which are numerous, according to \cref{lem:boosters}),
advancing the expander towards Hamiltonicity.
We summarize this in the following lemma:
\begin{lemma}[Sprinkling]\label{lem:sprinkling}
  For every $\beta>0$ there exists $C>0$ such that the following holds.
  For every $n$-vertex $\beta n$-expander,
  the addition of $Cn$ random non-edges to $G$
  makes it \whp{} Hamiltonian.
\end{lemma}

\begin{proof}
  Since $G$ is a $\beta n$-expander,
  each of its connected components has size at least $3\beta n$,
  so there are at most $1/(3\beta) = O(1)$ components.
  Thus, in the first $t_1\to\infty$ rounds,
  there will be, \whp{}, an edge (observed and purchased)
  between every pair of components.
  Thus, \whp{}, $G_{t_1}$ is connected.

  Now, using \cref{lem:boosters} and the monotonicity of expansion,
  we know that $G_{t_1}$ is either Hamiltonian (in which case we are done)
  or it has at least $\beta^2\binom{n}{2}$ boosters. 
  Thus, the probability that a random edge hits a booster is at least $\beta^2$.
  Let $t_2=\tau n$ be an integer with $\tau>\beta^{-2}$,
  and consider the next $t_2$ random edges.
  For $i\in\{t_1+1,\dots,t_1+t_2\}$ let $y_i$ be the indicator of the event that the $i$'th edge hits a booster,
  or that $G_{i-1}$ is already Hamiltonian.
  Let $y=\sum_{i=1}^{t_1} y_i$,
  and note that $y\ge n$ implies that $G_{t_1+t_2}$ is Hamiltonian.
  Since $y$ stochastically dominates a binomial random variable
  with $t_2$ attempts and success probability $\beta^2$,
  it follows from \cref{thm:chernoff}
  that the probability that $y\le n$ is $o(1)$.
  Thus, after adding $t_2$ further random non-edges, the graph becomes \whp{} Hamiltonian.
\end{proof}

We also use \cref{lem:Posa} to show that in Hamiltonian expanders,
the set of endpoints of Hamilton paths whose other endpoint is fixed
is large.
\begin{lemma}\label{lem:manyends}
  Let $G$ be a Hamiltonian $R$-expander and let $v$ be a vertex of $G$.
  Then, the number of endpoints of Hamilton paths of $G$ whose other endpoint is $v$
  is at least $R$.
\end{lemma}

\begin{proof}
  Since $G$ is Hamiltonian,
  there exists a Hamilton path $P$ with $v$ as an endpoint.
  By \cref{lem:Posa},
  $|N(\cR(P))|\le 2|\cR(P)|-1$;
  thus $|\cR(P)|>R$.
\end{proof}

\subsection{\texorpdfstring{Greedy $k$-matchings and $k$-cores}{Greedy k-matchings and k-cores}}
\label{sec:matching}
We show that the greedy strategy works ``well'' for purchasing a large $k$-matching;
namely, a subgraph of maximum degree $k$ in which all but a few vertices are of degree $k$.
This will be useful in the proofs of \cref{thm:mindeg} and \cref{thm:pm}.

\begin{lemma}\label{lem:kmatching}
  Let $k$ be a positive integer and let $\eps>0$.
  Then, there exists $C=C(k,\eps)>0$ such that if $t\ge Cn$ and $b\ge (k-\eps)n/2$ then
  there exists a $(t,b)$-strategy of Builder
  (that succeeds \whp{})
  that purchases a graph with maximum degree $k$
  in which all but at most $\eps n$ vertices are of degree $k$.
\end{lemma}

\begin{proof}
  Builder follows the greedy strategy;
  that is, he purchases every edge both of whose endpoints are of degree below $k$.
  Observe that this strategy ensures that the maximum degree of Builder's graph is at most $k$ at any stage.
  Let $U$ denote the (dynamic) set of vertices of degree below $k$.
  Let $C=k\eps^{-2}$ and let $t\ge Cn$.
  For $i=1,\ldots,t$ let $x_i$ be the indicator of the event
  that the $i$'th edge arriving is contained in $U$
  (and thus purchased by Builder) {\em or} $|U|\le\eps n$.
  The probability for this event is at least $\sim\eps^2$.
  Thus, by \cref{prop:chernoff},
  $x:=\sum x_i$ stochastically dominates a binomial random variable with mean $\sim \eps^2Cn=kn$.
  Therefore, the probability that $x < (k-\eps)n/2$ is,
  by Chernoff bounds (\cref{thm:chernoff}), $o(1)$.
  On the event that $x\ge (k-\eps)n/2$,
  either $|U|\le \eps n$, or Builder has purchased at least $(k-\eps)n/2$ edges,
  in which case it follows that $|U|\le \eps n$.
  Obviously, by that time Builder has purchased at most $(k-\eps)n/2$ edges.
\end{proof}

Given a vertex set we may use \cref{thm:ok:size} below to construct a spanning graph of an arbitrary fixed minimum degree
(see \cref{sec:mindeg}).
That requires, however, that the time will be long enough so that every vertex will have the desired degree
in the underlying graph process.
If we are satisfied with a {\em small} graph of a given minimum degree, we can do it in linear time.
The following corollary follows directly from
\cref{lem:gnt:core}.
\begin{corollary}\label{cor:core}
  Let $k\ge 1$ be an integer
  and let $b=t=8kn$.
  Then,
  by purchasing every observed edge,
  Builder has a $(t,b)$-strategy
  resulting (\whp{}) in a graph
  whose $k$-core is of size at least $n/2$.
\end{corollary}

\section{\texorpdfstring{The random $k$-nearest neighbour graph}{The random k-nearest neighbour graph}}
\label{sec:ok}
Suppose the edges of the complete graph are endowed with independent uniform random ``lengths'' in $[0,1]$.
The (random) graph obtained by retaining (only) the $k$ shortest edges incident with each vertex is called
the \defn{random $k$-nearest neighbour graph} (see~\cites{CF95,FK}).
We denote it by $\cO_k$ (or by $\cO_k(n)$ if we wish to emphasize that it is on $n$ vertices).
As we remarked earlier,
the choice of uniform distribution supported on $[0,1]$ is arbitrary;
any distribution without atoms would yield the same random graph (see~\cite{CF95});
we use this fact in the proof of \cref{obs:ok:kout}.
The following theorem of Cooper and Frieze~\cite{CF95} (stated here in a weaker form) will be useful for us
in several places in this paper.
\begin{theorem}[\cite{CF95}*{Theorem 1.1}]\label{thm:ok:size}
  For every constant $k$,
  the number of edges of $\cO_k$ is \whp{}
  \[
    kn - \frac{n(n-1)}{2n-3}\sum_{1\le i\le j\le k}
    \frac{\binom{n-2}{i-1}\binom{n-2}{j-1}}{2^{\delta(i,j)}\binom{2n-4}{i+j-2}}
    + O(\sqrt{n}\log{n}),
  \]
  where $\delta(i,j)$ is the Kronecker delta.
\end{theorem}

It would be useful for us to state \cref{thm:ok:size} in an ``asymptotic'' form.
To this end,
we complement the analysis of Cooper and Frieze
by obtaining an asymptotic estimate of the expected size of $\cO_k$.
Let
\begin{equation}\label{eq:ok}
  \co_k = k-\frac{1}{4}\sum_{i,j=0}^{k-1}\binom{i+j}{i}2^{-(i+j)}.
\end{equation}

\begin{corollary}\label{cor:ok:size}
  For every integer $k\ge 1$
  the number of edges of $\cO_k$ is \whp{}
  $\co_k n + O(\sqrt{n}\log{n})$.
  Moreover,
  $k/2<\co_k\le 3k/4$,
  and $\lim_{k\to\infty} \co_k/k = 1/2$.
\end{corollary}

\begin{proof}
  First note that 
  for every nonnegative integer $k$,
  \begin{equation}\label{eq:coins}
    \sum_{i=0}^k \binom{i+k}{i}2^{-(i+k)} = 1.
  \end{equation}
  Indeed,
  consider a sequence of fair coin flips that stops whenever $k+1$ heads {\em or}
  $k+1$ tails are encountered.
  Let $x$ be the number of tails if the sequence stopped by encountering $k+1$ heads,
  or the the number heads otherwise.
  Thus, $x$ is supported on $\{0,\dots,k\}$
  and $\pr(x=i) = \binom{i+k}{i}2^{-(i+k)}$.
  For an integer $k\ge 0$ write $f(k)=\sum_{i,j=0}^k\binom{i+j}{i}2^{-(i+j)}$
  (so $\co_k=k-f(k-1)/4$).
  Note that for $k\ge 1$, by \cref{eq:coins},
  \[
    f(k) = f(k-1) + 2\sum_{i=0}^k\binom{i+k}{i}2^{-(i+k)} - \binom{2k}{k}2^{-2k}
         = f(k-1) + 2 - \Theta(k^{-1/2}).
  \]
  Thus, $f(0)=1$, $f(k)\le 1+2k$ for $k\ge 0$ and
  \begin{equation}\label{eq:coins:lim}
    \lim_{k\to\infty} \frac{f(k)}{k} = 2.
  \end{equation}
  It follows from \cref{thm:ok:size} that
  the number of edges of $\cO_k$ is, \whp{},
  \[\begin{aligned}
    & \left(k - \frac{1}{2} \sum_{1\le i\le j\le k} \binom{i+j-2}{i-1}2^{-(i+j-2)-\delta(i,j)}\right)n
      + O(\sqrt{n}\log{n})\\
    &= \left(k - \frac{1}{4} \sum_{i,j=0}^{k-1} \binom{i+j}{i} 2^{-(i+j)}\right)n
      + O(\sqrt{n}\log{n})
    = \co_kn + O(\sqrt{n}\log{n}).
  \end{aligned}\]
  Evidently, $f(k)$ is increasing (hence $\co_k$ is decreasing);
  thus,
  $\frac{k}{2}+\frac{1}{4} \le \co_k\le \co_1= 3k/4$.
  In addition, \cref{eq:coins:lim} implies that $\co_k/k\to1/2$ as $k\to\infty$.
\end{proof}

One of the main results of~\cite{CF95},
that we conveniently use here,
is that for $k\ge 3$, $\cO_k$ is \whp{} $k$-vertex-connected.
Cooper and Frieze also showed (there) that
the probability that $\cO_2$ is connected is bounded away from $0$ and $1$.
\begin{theorem}[\cite{CF95}*{Theorem 1.4}]\label{thm:ok:conn}
  For $k\ge 3$, $\cO_k$ is \whp{} $k$-vertex-connected.
\end{theorem}

Our main goal in this section is to prove that for large enough $k$,
$\cO_k$ is \whp{} Hamiltonian.

\begin{theorem}\label{thm:ok:ham}
  There exists $k_H>0$
  such that for every $k\ge k_H$,
  $\cO_k$ is \whp{} Hamiltonian.
\end{theorem}

As mentioned in the introduction, this partially resolves
\cite{FriBib}*{Problem 45}.
It is conjectured that $k_H=3$ or $k_H=4$.

In fact, we need (and hence prove) a stronger result that immediately implies \cref{thm:ok:ham}.
To state it,
consider the following equivalent way of generating $\cO_k$ (see~\cite{CF95}).
Given a uniform random ordering of the edges of the complete graph $K_n$,
we consider them one by one.
When an edge arrives, we add it to $\cO_k$ if and only  if one of its endpoints is incident
to a vertex that is --- at this point --- of degree less than $k$.
When we only add edges that arrived by time $t$ for some fixed $0\le t\le \binom{n}{2}$,
we obtain a subgraph of $\cO_k$ that we denote by $\cO_k^t$.
This process yields a coupling between the random graph process at time $t$
and $\cO_k^t$.
Under this coupling, which we call the \defn{natural coupling},
$\cO_k^t$ is a subgraph of $G_t$.
For convenience, we denote $\cO_{k,d}=\cO_k^{\tau_d}$ for every $0\le d\le k$.
Note that as long as the minimum degree of the random graph process is at most $1$,
it cannot contain a Hamilton cycle;
thus, for $t<\tau_2$, $\cO_k^t$ is not Hamiltonian.

\begin{theorem}\label{thm:ok:ham:hit}
  There exists $k_H>0$
  such that for every $k\ge k_H$,
  $\cO_{k,2}$ is \whp{} Hamiltonian.
\end{theorem}
As mentioned in the introduction,
a classical result due to Bollob\'as~\cite{Bol84}
and independently Ajtai, Koml\'os, and Szemer\'edi~\cite{AKS85}
is a hitting-time result, which states that, \whp{},
the random graph process will contain a Hamiltonian cycle as soon as its minimum degree reaches $2$.
\Cref{thm:ok:ham:hit} refines that result by showing that
there exists an {\em online} algorithm
that can choose a small number of edges (linear in the number of vertices)
from the first $\tau_2$ edges of the random graph process
that induce a Hamiltonian cycle \whp{}.
Furthermore, this algorithm is deterministic,
simple,
and only requires local information,
namely, the degrees of the endpoints of the considered edge.

\medskip

The rest of this section is devoted to the proof of \cref{thm:ok:ham:hit},
which implies, in turn, \cref{thm:ham:hit} (see \cref{sec:ham:hit}).
While the proof shares some ideas with previous proofs
of the hitting time result for Hamiltonicity of the random graph process
(see, e.g.,~\cite{Kri16}),
it still has its twists and turns to achieve the goal.
In particular,
we find a way around the common method of {\em random sparsification} accompanied by {\em sprinkling}
(which is not available to us in this setting)
by casting our argument directly on the (already sparse) graph
and finding boosters inside it using a sophisticated argument.
We begin with a couple of helpful observations.
The following is immediate:

\begin{observation}\label{obs:ok:deg}
  For integers $0\le d < k$,
  time $t$
  and a vertex $v$,
  under the natural coupling between $G_t$ and $\cO_k^t$ one has
  $d_{\cO_k^t}(v)=d$ if and only if $d_{G_t}(v)=d$.
\end{observation}

The next observation that we will need for this goal is that $\cO_k$ is stochastically dominated by the \defn{random $k$-out graph}
(see, e.g., in~\cite{FK}):
the random graph whose edges are generated independently for each vertex by a random choice of $k$ distinct edges incident to that vertex.
For brevity, denote it by $\cG_k$.
Note that $\cG_k$ has roughly $kn$ edges,
whereas, as stated in \cref{cor:ok:size},
$\cO_k$ is nearly twice as sparse (for large $k$).

\begin{observation}\label{obs:ok:kout}
  $\cO_k$ is stochastically dominated by $\cG_k$.
\end{observation}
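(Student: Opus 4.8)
The plan is to construct an explicit coupling of $\cO_k$ with $\cG_k$ under which $\cO_k\subseteq\cG_k$.

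First I would replace the length-based description of $\cO_k$ by an order-based one: reveal the edges of $K_n$ in a uniformly random order, and for each vertex $v$ let $F_v$ consist of the first $k$ edges incident with $v$ to appear. Since the order in which the edges incident with a fixed vertex are revealed has exactly the distribution of their ranking by the independent uniform lengths defining $\cO_k$, and $\cO_k$ depends only on these rankings, one has $\cO_k=\bigcup_vF_v$; equivalently, $\{u,v\}\in\cO_k$ iff, when this edge is revealed, fewer than $k$ edges at $u$, or fewer than $k$ edges at $v$, have already appeared.

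Next I would \emph{charge} each edge of $\cO_k$ to a single endpoint: when $e=\{u,v\}\in\cO_k$ is revealed, charge it to an endpoint at which fewer than $k$ edges have appeared so far, breaking a tie with an independent fair coin. With $D_v$ denoting the set of edges charged to $v$, the sets $(D_v)_v$ are pairwise disjoint, satisfy $D_v\subseteq F_v$ (so $|D_v|\le k$), and $\bigcup_vD_v=\cO_k$. It then suffices to prove that $(D_v)_v$ is stochastically dominated, with respect to coordinatewise inclusion, by a family $(\hat M_v)_v$ of \emph{independent} uniformly random $k$-subsets of the stars of the vertices (the star of $v$ being the set of edges incident with $v$): Strassen's theorem then provides a coupling with $D_v\subseteq\hat M_v$ for all $v$, whence $\cO_k=\bigcup_vD_v\subseteq\bigcup_v\hat M_v$, and the latter has exactly the law of $\cG_k$.

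For this domination I would use a vertex-by-vertex coupling: processing the vertices in a fixed order and having produced $\hat M_{v_1},\dots,\hat M_{v_{i-1}}$, choose $\hat M_{v_i}$ to be a uniformly random $k$-subset of the star of $v_i$ containing the residual demand $D_{v_i}\setminus(\hat M_{v_1}\cup\dots\cup\hat M_{v_{i-1}})$, the fresh randomness being arranged so that $\hat M_{v_i}$ stays uniform and independent of the past. This step is legitimate precisely when the conditional law of the residual demand is stochastically dominated by a uniformly random $k$-subset of the star, and establishing this is the crux. It is made plausible by three facts: the residual demand has size at most $|D_{v_i}|\le k$; the construction (random order together with fair coins) is invariant under relabelling the vertices, so after the conditioning no neighbour of $v_i$ is singled out among those not yet forced; and the uniformly random filler edges used to pad $\hat M_{v_1},\dots,\hat M_{v_{i-1}}$ to size $k$ dilute the demand at the already-processed neighbours of $v_i$. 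I expect the careful estimate behind this step — especially ruling out that conditioning on the earlier $\hat M$'s concentrates $D_{v_i}$ on a single edge to a not-yet-processed neighbour — to be the main obstacle. This is also where disjointness of the $D_v$ matters: for the raw sets $F_v$, which have size exactly $k$ and whose shared edges are \emph{positively} correlated, the analogous product-order domination fails, so the ``absorption'' gained by allowing an edge to be covered by either endpoint's set is essential.
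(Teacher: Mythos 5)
Your reductions are sound as far as they go: the random-order description of $\cO_k$, the charging that produces pairwise disjoint sets $D_v\subseteq F_v$ with $\bigcup_v D_v=\cO_k$, and the observation that product-order domination of $(D_v)_v$ by independent uniform $k$-subsets of the stars would finish the proof via Strassen. But the proposal stops exactly where the proof would have to begin: the claim that, conditionally on the already-constructed $\hat M_{v_1},\dots,\hat M_{v_{i-1}}$, the residual demand at $v_i$ is stochastically dominated by a uniform $k$-subset of the star of $v_i$ is never proved, and the plausibility points you offer do not close it. In particular, the exchangeability argument does not survive the conditioning: the $\hat M_{v_j}$ are specific edge sets, and whether $\{v_j,v_i\}$ lies in $\hat M_{v_j}$ (and whether it entered as demand or as filler) carries information about the arrival time of that edge and about the coin used to charge it, so after conditioning the neighbours of $v_i$ are no longer interchangeable; the ``dilution'' by filler edges is asserted rather than estimated. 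Moreover, even if the unconditional product-order domination of $(D_v)_v$ is true, your sequential coupling needs the conditional domination at every step, which does not follow from the unconditional statement --- so as written the argument has an unproven lemma at its core rather than a complete proof.

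The statement also does not need this machinery; a short direct coupling suffices, and this is what the paper does. Assign independent uniform $[0,1]$ weights $\w(u,v)$ to ordered pairs and let $\cG_k$ consist, for each $u$, of the edges realizing the $k$ smallest values of $\w(u,\cdot)$. Set $\x\{u,v\}=\min\{\w(u,v),\w(v,u)\}$: these unordered weights are independent across edges and atomless, so the $k$-nearest-neighbour graph built from them has exactly the law of $\cO_k$. Now if $\{u,v\}\in\cO_k$, say with $\x\{u,v\}=\w(u,v)$ and with $\x\{u,v\}$ among the $k$ smallest of $\{\x\{u,w\}\}_w$, then for each of the at least $n-1-k$ vertices $w$ with $\x\{u,w\}>\x\{u,v\}$ we get $\w(u,w)\ge\x\{u,w\}>\w(u,v)$, so $\w(u,v)$ is among the $k$ smallest out-weights at $u$ and hence $\{u,v\}\in\cG_k$. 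I recommend either proving your conditional-domination lemma in full or replacing the charging/Strassen scheme by such a direct coupling.
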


\begin{proof}
  For every ordered pair of distinct vertices $u,v$ let
  $\w(u,v)$ be a uniform $[0,1]$ random weight assigned to the ordered pair $(u,v)$.
  Then, $\cG_k$ is obtained by adding the edge $\{u,v\}$
  whenever $\w(u,v)$ is one of the $k$ smallest weights in the family $\{\w(u,w)\}_{w}$.
  For every (unordered) pair of distinct vertices $u,v$ let
  $\x\{u,v\}=\min\{\w(u,v),\w(v,u)\}$.
  Observe crucially that while $\x$ is not uniform,
  it has a continuous density function on $[0,1]$
  and, more importantly, is independent for distinct edges.
  Thus, $\cO_k$ is obtained by including the edge $\{u,v\}$ whenever $\x\{u,v\}$ is one of the $k$ smallest weights in the family
  $\{\x\{u,w\}\}_w$.
  Let us now show that under this coupling, $\cO_k\subseteq\cG_k$.
  Let $\{u,v\}\in\cO_k$ and assume without loss of generality that $\x\{u,v\}=\w(u,v)$.
  Pick a vertex $w$ for which $\x\{u,v\}<\x\{u,w\}$ (there are at least $n-1-k$ such vertices).
  Then, $\w(u,v) = \x\{u,v\} < \x\{u,w\} \le \w(u,w)$.
  It follows that there are at least $n-1-k$ vertices $w$ for which $\w(u,v)<\w(u,w)$,
  hence $\{u,v\}\in\cG_k$.
\end{proof}

We now show that for $k\ge 12$, $\cO_{k,2}$ is \whp{} an expander\footnote{%
We made no effort to optimise the constant $12$ in the lemma.}.

\begin{lemma}\label{lem:ok:exp}
  There exists $\beta>0$
  such that
  for $k\ge 12$,
  $\cO_{k,2}$ is \whp{} a $\beta n$-expander.
\end{lemma}

\begin{proof}
  We prove it for $k=12$ and derive the statement for every $k\ge 12$ due to monotonicity.
  Fix $b=k/11>1$.
  We first prove that there exists $\gamma>0$ for which,
  \whp{}, no vertex set $S$ in $\cO_{k,2}$ of cardinality $s\le\gamma n$
  spans more than $bs$ edges.
  Since $\cO_{k,2}$ is stochastically dominated by $\cO_k$
  and (by \cref{obs:ok:kout}) $\cO_k$ is stochastically dominated by $\cG_k$,
  we may prove that statement for $\cG_k$.
  Note that the probability that a given edge appears in $\cG_k$
  is at most twice the probability that a given vertex chooses a given incident edge
  (which is $k/(n-1)$).
  By the union bound over the choice of $S$ and the choices of the spanned edges,
  the probability that there exists a vertex set $S$ in $\cG_{k}$ of cardinality $s\le\gamma n$
  that spans more than $bs$ edges is at most
  \[
    \sum_{s=2}^{\gamma n} \binom{n}{s}\binom{\binom{s}{2}}{bs}
    \left(\frac{3k}{n}\right)^{bs}
    \le \sum_{s=2}^{\gamma n}
    \left[
      \frac{en}{s} \left(\frac{es}{2b}\cdot\frac{3k}{n}\right)^b
    \right]^s
    = \sum_{s=2}^{\gamma n}\left(\Theta\left(\left(\frac{s}{n}\right)^{b-1}\right)\right)^s.
  \]
  If $n/\log{n}\le s\le\gamma n$ then the $s$'th summand is at most $c^{n/\log{n}}$ for some $c<1$
  (for small enough $\gamma>0$).
  Otherwise, it is $(o(1))^s$.
  Thus, the sum is $o(1)$.
  
  Back to $\cO_{k,2}$,
  let $X=X^{\tau_2}_k$ be the set of vertices of degree smaller than $k$
  at time $\tau_2$.
  Suppose first that there exists a set $B$ of size at most $\beta n$ for $\beta=\gamma/5$
  with $B\cap X=\es$ and $|N(B)|\le 4|B|$.
  Set $S=B\cup N(B)$ and note that $s=|S|\le 5\beta n=\gamma n$
  while $|E(S)|\ge k\beta n/2>bs$, contradicting the above.
  Now, let $A$ be a vertex set with $|A|\le \beta n$.
  Let $A_X=A\cap X$ and $A_Y=A\sm X$.
  Let further $S_X=A_X\cup N(A_X)$ and $S_Y=A_Y\cup N(A_Y)$.
  By the above, $|N(A_Y)|\ge 4|A_Y|$.
  By \cref{lem:gnt:small:far,obs:ok:deg},
  $|N(A_X)|\ge 2|A_X|$
  and every vertex in $A_Y$ (or in general) has at most one neighbour in $S_X$.
  Hence, in particular, $|N(A_Y)\sm S_X|\ge |N(A_Y)|-|A_Y|\ge 3|A_Y|$.
  Thus,
  \[
    |N(A)|
    = |N(A_X)\sm A_Y|+|N(A_Y)\sm S_X|
    \ge (2|A_X|-|A_Y|) + 3|A_Y|
    \ge 2|A|,
  \]
  as required.
\end{proof}

Our next step is to show that $\cO_{k,2}$ is also typically connected
(for large enough $k$).
For that, we need the following lemma.
As we mentioned earlier, Cooper and Frieze~\cite{CF95} showed that
$\cO_2$ is  connected with probability that is bounded away from $0$ and $1$.
Since $\cO_3\not\subseteq\cO_{k,2}$ for any $k$,
we cannot use \cref{thm:ok:conn}. 

\begin{lemma}\label{lem:ok:quad}
  Let $\alpha>0$
  and let $F$ be a fixed set of edges in $K_n$ with $|F|\ge\alpha n^2$.
  Then, there exist $k_0=k_0(\alpha)$, $\eta=\eta(\alpha)$ and $c=c(\alpha)>0$
  such that for every $k\ge k_0$,
  if $t=\eta kn$
  then
  $\pr(F\cap E(\cO_k^{t})=\es)\le \exp(-ckn)$.
\end{lemma}

\begin{proof}
  Let $\gamma=\sqrt{\alpha}$,
  and $\eta=\gamma/3$.
  Take $k>6(1-\log{\gamma})/\gamma^2$,
  and $t_0=\eta kn$.
  Let $E_0=\{e_1,\ldots,e_{t_0}\}$ be the first $t_0$ edges of the underlying random graph process
  (in this order)
  and let $F_0=F\cap E_0$.
  Let $V_0=\bigcup F_0$ be the set of vertices covered by the edges of $F_0$.
  We observe that typically $V_0$ cannot be too small;
  indeed,
  for a given set $S$ to be $\bigcup F_0$
  we need every edge of $F$ that is not contained in $S$
  to appear after time $t_0$.
  By the union bound over all choices of $V_0$ of size at most $\gamma n$,
  \[
    \begin{aligned}
      \pr(|V_0|\le\gamma n)
      &\le \binom{n}{\gamma n} \left(1-\frac{t_0}{\binom{n}{2}}\right)^{|F|-\binom{\gamma n}{2}}
      \le \exp((\gamma-\gamma\log\gamma-\eta k(2\alpha-\gamma^2))n)\\
      &\le \exp((\gamma-\gamma\log{\gamma}-\gamma^3k/3)n)
      \le \exp(-ckn),
    \end{aligned}
  \]
  for $c=\gamma^3/6>0$.
  Let $F_1=F_0\cap E(\cO_k^{t_0})$.
  Note that if $F_1=\es$
  then when an edge in $F_0$ arrived both its endpoints had degree at least $k$;
  hence all vertices of $V_0$ have degree at least $k$ in $\cO_k^{t_0}$.
  But in this case, $|V_0|\ge\gamma n$ implies that
  $\gamma n k/2\le |E(\cO_k^{t_0})|\le t_0=\eta kn=\gamma kn/3$,
  a contradiction.
  Thus,
  \[
    \pr(F\cap E(\cO_k^t)=\es)
    \le \pr(F\cap E(\cO_k^{t_0})=\es)
    \le \pr(F_1=\es)
    = \exp(-ckn).
    \qedhere
  \]
\end{proof}

\begin{lemma}\label{lem:ok:large}
  For every $\beta>0$
  there exists $k_0=k_0(\beta)$
  such that for every $k\ge k_0$,
  \whp{},
  every two disjoint vertex sets in $\cO_{k,2}$ with $|A|,|B|\ge\beta n$
  are connected by an edge.
\end{lemma}

\begin{proof}
  Let $\alpha=\beta^2$
  and let $k_0,\eta,c$ be the constants guaranteed by \cref{lem:ok:quad}.
  Let $\ent(\beta)=-\beta\log{\beta}-(1-\beta)\log(1-\beta)$,
  and set $k_1=3\ent(\beta)/c$.
  Take $k\ge \max\{k_0,k_1\}$ and $t=\ceil{\eta kn}$.
  Fix two disjoint vertex sets $A,B$ with $|A|,|B|=\beta n$.
  Let $F=E(A,B)$, so $|F|\ge\alpha n^2$.
  By \cref{lem:ok:quad},
  the probability that in $\cO_k^t$ does not contain an edge from $F$
  (and hence the probability that in $\cO_k^t$ the sets $A,B$ are not connected by an edge)
  is at most $\exp(-ckn)$.
  By the union bound over all sets $A,B$,
  the probability that in $\cO_k^t$ there are two disjoint sets of size at least $\beta n$
  that are not connected by an edge is at most
  \[
    \binom{n}{\beta n}^2 e^{-ckn}
    = \exp(((2+o(1))H(\beta)-ck)n) = \exp(-\Omega(n)) = o(1).
  \]
  The result follows since \whp{} $t\le\tau_2$, implying $\cO_k^t\subseteq\cO_{k,2}$.
\end{proof}

\begin{corollary}\label{cor:ok:exp}
  There exists $k_0>0$
  such that for every $k\ge k_0$,
  $\cO_{k,2}$ is \whp{} an $\frac{n}{4}$-expander.
\end{corollary}

\begin{proof}
  Let $\beta$ be the constant guaranteed by \cref{lem:ok:exp}
  (we may (and will) assume that $\beta<1/4$),
  and let $k_0=k_0(\beta)$ be the constant guaranteed by \cref{lem:ok:large}.
  Let $k\ge\max\{k_0,12\}$.
  Thus, by \cref{lem:ok:exp}, $\cO_{k,2}$ is \whp{} a $\beta n$-expander;
  and, by \cref{lem:ok:large}, $\cO_{k,2}$ has \whp{} the property
  that any two disjoint vertex sets of size at least $\beta n$ are connected by an edge.
  Assume both properties hold
  and
  let $A$ be a vertex set with $|A|\le n/4$.
  If $|A|\le\beta n$ then $|N(A)|\ge 2|A|$ since $\cO_{k,2}$ is a $\beta n$-expander.
  Otherwise, there is an edge between $A$ and any other set of size at least $\beta n$,
  hence
  $|N(A)|\ge n-\beta n-|A|\ge (3/4-\beta)n\ge n/2 \ge 2|A|$.
\end{proof}

\begin{corollary}\label{cor:ok:conn}
  With the same $k_0>0$ as in \cref{cor:ok:exp},
  for every $k\ge k_0$,
  $\cO_{k,2}$ is \whp{} connected.
\end{corollary}

\begin{proof}
  Let $k_0$ be the constant guaranteed by \cref{cor:ok:exp}
  and let $k\ge k_0$.
  From \cref{cor:ok:exp} it follows that
  every connected component of $\cO_{k,2}$ is, \whp{}, of size at least $3n/4$,
  implying that the graph is connected.
\end{proof}

We note that \cref{cor:ok:conn}
can be obtained directly from \cref{lem:ok:exp,lem:ok:large}.
However, we state \cref{cor:ok:exp} separately,
as it will be used in the proof of \cref{thm:ok:ham:hit}.

\begin{proof}[Proof of \cref{thm:ok:ham:hit}]
  Let $k_0$ be the
  constant from \cref{cor:ok:exp},
  let $\ell=\max\{k_0,500\}$
  and let $k = 100\ell$
  (this $k$ will be the $k_H$ from the statement of the theorem).
  Note that by proving the statement of the theorem for $k$,
  due to monotonicity of $\cO_{k,2}$ in the first parameter,
  we prove it for every $k'\ge k$.
  Let $t_0=6\ell n$ and write $a=6\ell/k$, so $t_0=akn$.
  Consider the first $t_0$ edges of the random graph process
  (and recall from \cref{lem:gnt:deg:hit} that $t_0\ll\tau_2$ \whp{}).
  When an edge $e$ arrives
  we colour it {\em red} if it is incident to a vertex of degree $d<\ell$,
  or {\em black} otherwise.

  Recall (from \cref{sec:gnm})
  that $Y_\ell^t$ is the set of vertices in $G_t$ with degree at least $\ell$.
  Let $L=Y_\ell^{t_0}$,
  and note that by \cref{lem:gnt:small},
  $|L|\ge 0.99n$ \whp{}. 
  We now consider the next edges of the random graph process, until time $\tau_2$.
  When an edge $e$ arrives
  we colour it {\em red} if it is incident to a vertex of degree $d<\ell$
  (note that these red edges are {\em not} contained in $L$),
  or {\em black} if it is any other edge not contained in $L$.
  If the arriving edge is contained in $L$,
  we do not record its identity,
  but rather its arrival time.
  In particular, we count these edges.
  We observe crucially that the exact identity of such an edge
  does not affect the rest of the colouring process;
  indeed, any permutation of the edges of the random graph process
  that fixes the coloured edges (by time $\tau_2$) and the edges not in $L$
  is feasible.
  Let $m$ be the number of edges we counted in $L$
  and let $t_1,\dots,t_m$ be their arrival times
  ($t_0<t_1<\dots<t_m<\tau_2$).
  Note that the probability of an edge to fall inside $L$
  before $O(n)$ edges fell there (without conditioning on the rest of the process)
  is at least $p:=0.98$
  (since the number of existing edges in $L$ is $o(n^2)$).
  Thus, for any $b=O(1)$,
  the number of edges falling into $L$
  between $t_0$ and $t_0+bn$
  stochastically dominates a binomial random variable with $bn$ attempts
  and success probability $p$.
  Take $b=500$, and $m'=\floor{bnp/2}$.
  It follows that $m'<m$
  and $t_i<t_0+bn$ for every $1\le i\le m'$ \whp{}.

  Let $F$ be the set of non-edges in $L$ at time $\tau_2$.
  Conditioning on the entire colouring process
  (including $m$ and the times $t_1,\ldots,t_m$),
  the set of edges that fall into $L$ at times $t_1,\dots,t_m$
  is distributed as a uniformly sampled edge set of size $m$ of $F$.
  Let $f_1,\ldots,f_m$ be a random ordering of this random set of edges.
  Write $F_i=\{f_1,\ldots,f_i\}$ for $0\le i\le m$.
  We add the random edges in $F_m$ one by one (edge $f_i$ at time $t_i$),
  so the distribution of $f_i$ is uniform over $F\sm F_{i-1}$.
  Some of these edges will be coloured {\em blue} according to a rule explained below.
  For $i\in[m]$,
  let $K_i=Y_k^{t_i-1}\cap L$ be the set of vertices of degree at least $k$ in $L$ just before time $t_i$.
  We note by \cref{lem:gnt:large}, \cref{obs:ok:deg} and the discussion above that
  $|K_i| \le 2t_i/k \le 2(a+b/k)n$ for every $i\in[m']$, \whp{}.
  Note that the graph that consists of the red edges at time $\tau_2$ is $\cO_{\ell,2}$.
  For every $0\le i\le m$,
  denote by $H_i$ the graph that consists of the red edges (at time $\tau_2$)
  and the blue edges among $F_i$.
  In particular, $H_0=\cO_{\ell,2}$, and for every $i\ge 0$, $H_i$ is a supergraph of $\cO_{\ell,2}$.
  By \cref{cor:ok:exp,cor:ok:conn,lem:boosters},
  and since expansion (and connectivity) is monotone,
  we know that \whp{}, for all $i$,
  either $H_i$ is Hamiltonian,
  or there are at least $n^2/32$ boosters with respect to it.
  Suppose $H_i$ is not Hamiltonian, and let $Z_i$ be the set of boosters with respect to $H_i$.
  Let $Z'_i=Z_i\cap \binom{L}{2}\sm \binom{K_i}{2}\sm F_i$.
  These are the boosters that are
  (a) contained in $L$,
  (b) have at least one endpoint of degree less than $k$ at the time of arrival,
  and (c) have not been revealed earlier.
  It follows that
  \[
    |Z'_i|\ge |Z_i| - |E(X_\ell^{t_0},V)| - \left|\binom{K_i}{2}\right| - |F_i|
    \ge \left(\frac{1}{32}-\frac{1}{100}-2\left(a+\frac{b}{k}\right)^2 - o(1)\right)n^2.
  \]
  This is at least $n^2/100$
  for $\ell\ge 500$ and $k=100\ell$.
  When considering the edge $f_{i+1}$ we colour it blue
  if it is in $Z'_i$, or if it is in $\binom{L}{2}\sm \binom{K_i}{2}$
  and $H_i$ is already Hamiltonian.
  We observe that since $H_0=\cO_{\ell,2}\subseteq\cO_{k,2}$,
  and since every blue edge is in $\cO_{k,2}$,
  we have that $\cH_i\subseteq \cO_{k,2}$.
  Note that the probability that it is in $Z_i'$ is at least
  $|Z_i'|/(|F|-|F_i|)\ge q:=1/100$,
  independently of the past.
  Thus,
  using \cref{prop:chernoff},
  the number of blue edges stochastically dominates a binomial random variable with
  $m'\sim bnp/2$ attempts and success probability $q$,
  and hence mean $\sim qbnp/2>2n$.
  By Chernoff bounds (\cref{thm:chernoff}), the number of blue edges is at least $n$ \whp{}.
  We finish the proof by noting that if $H_i$ is not Hamiltonian and $f_{i+1}$ is coloured blue
  then $f_{i+1}$ is a booster with respect to $H_i$ that is contained in $\cO_{k,2}$.
  Thus, $H_{m'}$ (and thus $H_m$; and thus $\cO_{k,2}$) is \whp{} Hamiltonian.
\end{proof}

The proof above yields the following corollary.
\begin{corollary}\label{cor:ok:ham}
  Let $k_H$ be the constant from \cref{thm:ok:ham:hit}.
  Then, for every $k\ge k_H$,
  $\cO_{k,2}$ is \whp{} a Hamiltonian $\frac{n}{4}$-expander.
\end{corollary}

\section{Spanning subgraphs}\label{sec:span}
\subsection{Minimum degree and vertex-connectivity}\label{sec:mindeg}
It follows from \cref{cor:ok:size} that Builder can construct a graph with minimum degree $k$ by purchasing
(sufficiently) more than $\co_k n$ edges,
while observing only enough edges to guarantee a sufficient minimum degree in the underlying random graph process.

\begin{proof}[Proof of \cref{thm:mindeg:hit}]
  We describe Builder's strategy.
  Builder purchases any edge touching at least one vertex of degree less than $k$.
  Since by time $\tau_k$ the random graph process has, by definition, minimum degree of at least $k$,
  the obtained graph is exactly $\cO_k$.
  We conclude by observing that by \cref{cor:ok:size} Builder purchased, \whp{},
  at most $\co_kn+O(\sqrt{n}\log{n})$ edges,
  and that for $k\ge 3$, by \cref{thm:ok:conn}, $\cO_k$ is $k$-vertex-connected.
\end{proof}

\begin{proof}[Proof of \cref{thm:mindeg}]
Let $\eps'=\eps/2$.
We describe Builder's strategy in stages.
\paragraph{Stage I (Constructing a $k$-matching)}
\stage{$Cn$}{$(k-\eps')n/2$}
During the first $Cn$ steps, for $C=C(\eps',k)$, Builder constructs a $k$-matching,
in which all but at most $\eps' n$ vertices are of degree $k$.
This is possible, \whp{}, by \cref{lem:kmatching}.

\paragraph{Stage II (Handling low-degree vertices)}
\stage{$(1+\eps')n\log{n}/2$}{$\eps'kn$}
Denote by $V_0$ the set of vertices of degree smaller than $k$ in Builder's graph at the end of Stage~I.
By the above, \whp{}, $|V_0|\le \eps' n$.
Builder now emulates the $\cO_k$ model with respect to these vertices;
namely, Builder purchases any edge contained in $V_0$ that is incident to at least one vertex of degree less than $k$ in $G[V_0]$.
Since in the random graph process, by time $t$, the minimum degree is logarithmic in $n$,
Builder will observe at least $k$ additional edges incident to each vertex of $V_0$.
He will purchase a subset of these (thus, at most $\eps' kn$ edges).
\end{proof}

\subsection{Hamilton cycles}\label{sec:ham}
\subsubsection{Hitting time, inflated budget}\label{sec:ham:hit}
\begin{proof}[Proof of \cref{thm:ham:hit}]
  Let $k$ be a large enough constant so that $\cO_{k,2}$ is
  \whp{} Hamiltonian
  (such $k$ is guaranteed to exist by \cref{thm:ok:ham:hit}).
  Builder emulates $\cO_{k,2}$ by purchasing every edge that is incident to a vertex of degree less than $k$.
  He completes this in time $\tau_2$ while purchasing at most $kn$ edges.
\end{proof}

\subsubsection{Inflated time, optimal budget}\label{sec:ham:inflated}
In this section, we prove the following theorem:
\begin{theorem}[Hamiltonicity]\label{thm:ham}
  For every $\eps>0$ there exists $C>1$ such that the following holds.
  If $t\ge Cn\log{n}$ and $b\ge(1+\eps)n$ then there exists a $(t,b)$-strategy $B$ of Builder such that
  \[
    \lim_{n\to\infty} \pr(B_t\text{ is Hamiltonian}) = 1.
  \]
\end{theorem}

\paragraph*{Proof of \cref{thm:ham}}
We describe Builder's strategy in (four) stages.
Set $\eps'=\eps/40$.
We describe a $(t,b)$-strategy for $b=(1+\eps)n$ and $t<n\log{n}/\eps'$.

\paragraph{Stage I (Growing disjoint paths)}
\stage{$\Theta(n\log^{1/3}{n})$}{$(1-\eps')n$}
Let $C$ be a large constant to be chosen later.
In the first stage, Builder grows many sublinear paths, covering together $(1-\eps')n$ vertices.
Let $s'\sim n/\log^{1/3}{n}$ be an integer
and let $s_0=n/\log^{1/2}{n}\ll s'$.
Builder grows simultaneously paths $P_1,\ldots,P_{s'}$ as follows.
He begins by letting each $P_i$ be a (distinct) vertex $v^i_0$.
He then claims every edge that extends one of the paths without intersecting with other paths.
Formally, let $\nu=(1-\eps')n/s'$.
If at a given stage Builder has the paths
$(v^1_0,\ldots,v^1_{\ell_1}),\ldots,(v^{s'}_0,\ldots,v^{s'}_{\ell_{s'}})$ on the vertex set $V_P$
then he claims an observed edge if and only if
it is of the form $\{v^j_{\ell_j},w\}$
for some $j=1,\ldots,s'$, $\ell_j<\nu$
and $w\notin V_P$.
Builder stops if all but at most $s_0$ of the paths are of length at least $\nu$
(in which case the stage is ``successful''),
or when he has observed $t_1=\frac{C}{\eps'}n\log^{1/3}{n}$ edges (in which case the stage ``fails''),
whichever comes first.
For $1\le j\le s'$ and $1\le i\le t_1$,
let $y^j_i$ be the indicator that Builder has purchased the $i$'th observed edge,
and that this edge extends the path $P_j$.
For convenience, if $|V_P|\ge (1-\eps')n$ we set $y^j_i$ to $1$.
Observe that for every $j,i$,
$\pr(y^j_i=1) \ge (n-|V_P|)/\binom{n}{2}>\eps'/n$.
If the stage fails then there is an $s_0$-subset $S_0\subseteq[s']$
such that for every $j\in S_0$, the path $P_j$ ends up with length less than $\nu$.
Write $y_0 = \sum_{j\in s_0} \sum_{i=1}^{t_1} y^j_i$.
Thus, if the stage fails then $y_0<\nu s_0$. 
But $y_0$ stochastically dominates a binomial random variable with $t_1$ attempts
and success probability $p_1=\eps's_0/n$.
Set $\mu=t_1p_1\sim Cs_0\log^{1/3}{n}\gg s'$.
By the union bound over the choices of an $s_0$-subset of the paths
and using Chernoff bounds (\cref{thm:chernoff}), 
for large enough $C$,
the probability that the stage fails is at most
\[
  \binom{s'}{s_0} \pr(y_0 < \nu s_0)
  \le 2^{s'} \exp(-\mu/10) = o(1).
\]

\paragraph{Stage II (Connecting the paths)}
\stage{$n\log^{1/2}{n}$}{$o(n)$}
At this stage Builder tries to connect most of the $s=s'-s_0\sim s'$ complete paths to each other,
eventually obtaining an almost-spanning path in his graph.
For convenience, we re-enumerate the complete paths by $[s]$.
Let $t_2=n\log^{1/2}{n}$. 
During the next $t_2$ observed edges,
Builder purchases an edge connecting the last vertex of one of his paths to the first vertex of another.
Formally, he purchases every observed edge of the form $\{v^i_{\ell_i},v^j_0\}$ for $i\ne j$.
We will need the following lemma:
\begin{lemma}[\cite{BKS12}*{Lemma 4.4}]\label{lem:DFS}
  Let $s,k\ge 1$ and let $D$ be an $s$-vertex digraph
  in which for every two disjoint $A,B\subseteq V(D)$ with $|A|,|B|\ge k$,
  $D$ contains an edge between $A$ and $B$.
  Then, $D$ contains a directed path of length $s-2k+1$.
\end{lemma}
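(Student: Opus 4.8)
The plan is to run a depth-first search (DFS) on $D$ and to extract the required directed path from the DFS stack at a well-chosen moment. I would maintain, throughout the search, a partition of $V(D)$ into three evolving sets: $S$ (vertices already fully explored and removed from the stack), $P$ (the vertices currently on the stack, listed from the bottom up), and $T$ (vertices not yet discovered), starting from $S=P=\varnothing$ and $T=V(D)$. The dynamics are the standard ones: if $P\neq\varnothing$ with top vertex $v$, then either $v$ has an out-neighbour in $T$, in which case one such out-neighbour is moved from $T$ to the top of $P$ (a \emph{push}), or it does not, in which case $v$ is moved from $P$ to $S$ (a \emph{pop}); if $P=\varnothing$ but $T\neq\varnothing$, an arbitrary vertex of $T$ is pushed; the search ends once $P=T=\varnothing$. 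Two invariants hold at every step. First, $P$ induces a directed path in $D$: each push appends the arc from the previous top vertex to the new one, and each pop merely deletes an endpoint. Second, there is no arc from $S$ to $T$: when a vertex $u$ was popped into $S$, all of its out-neighbours had already been discovered (that is precisely why it was popped), and $T$ only ever loses vertices, so no current vertex of $T$ can be an out-neighbour of any vertex of $S$.

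Next I would combine the second invariant with the hypothesis. Since $S$ and $T$ are disjoint at every step, the absence of an arc from $S$ to $T$ forces $|S|<k$ or $|T|<k$ at every step (otherwise the hypothesis of the lemma would supply an arc from $S$ to $T$). Now note that $|S|$ increases by $1$ at each pop and is unchanged at each push, so it rises monotonically from $0$ to $s$; symmetrically $|T|$ decreases by $1$ at each push and is unchanged at each pop, falling monotonically from $s$ to $0$. We may assume $s\ge 2k$, since otherwise $s-2k+1\le 0$ and there is nothing to prove. Let $\tau$ be the last step at which $|T|\ge k$; as $|T|$ drops by at most one per step, in fact $|T|=k$ at step $\tau$, whence $|S|\le k-1$ there. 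Step $\tau+1$ is a push (that is the only kind of step that decreases $|T|$), so $|S|$ is still at most $k-1$ at step $\tau+1$, while $|T|=k-1$ there. At that moment $|P|=s-|S|-|T|\ge s-2(k-1)=s-2k+2$, so by the first invariant $P$ is a directed path on at least $s-2k+2$ vertices, i.e.\ a directed path of length at least $s-2k+1$, as claimed.

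I do not expect a genuine obstacle here: the entire content is the directed-DFS invariant together with the one-line observation that the hypothesis is exactly what rules out $|S|,|T|\ge k$ occurring simultaneously. The one thing to be careful about is the off-by-one between the number of vertices on the stack and the length of the path it spells out; selecting the \emph{post-push} moment $\tau+1$ (rather than $\tau$ itself) is what makes the count land cleanly on $s-2k+2$ vertices. One should also check the degenerate cases in passing — $s\le 2k-1$, where the statement is trivial, and the stack-reseeding push, which preserves both invariants since it only removes a vertex from $T$ — but both are immediate.
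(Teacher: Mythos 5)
Your DFS argument is correct: the two invariants (the stack spells a directed path; no arcs from the explored set $S$ to the unvisited set $T$), combined with the hypothesis forcing $\min\{|S|,|T|\}<k$ at all times and the careful choice of the post-push moment, give $|P|\ge s-2k+2$ vertices exactly as needed. The paper itself gives no proof and simply cites \cite{BKS12}*{Lemma 4.4}, and your argument is essentially the standard depth-first-search proof from that source, so there is nothing further to compare.
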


We use the lemma on an auxiliary digraph $D$ on the vertex set $[s]$ that is defined as follows.
We observe $t_2$ random edges.
Whenever Builder observes (and purchases) an edge of the form $\{v^i_{\ell_i},v^j_0\}$,
we add the edge $(i,j)$ to $D$.
We now have to claim that $D$ satisfies the requirements of the lemma for $k=\eps' s$.

\begin{claim}\label{cl:D:exp}
  \Whp{},
  every two disjoint $A,B\subseteq[s]$ with $|A|,|B|\ge \eps' s$ satisfy
  ${E_D(A,B)\ne\es}$.
\end{claim}

\begin{proof}
  Fix disjoint $A,B$ with $|A|,|B|\ge \eps' s$.
  The event that $E_D(A,B)=\es$ implies that none of the $t_2$ observed edges hits a pair $\{v^i_{\ell_i},v^j_0\}$ for $i\in A$ and $j\in B$.
  There are at least $\eps'^2 s^2$ such edges, hence that probability is at most
  \[
    \binom{\binom{n}{2}-\eps'^2 s^2}{t_2}/\binom{\binom{n}{2}}{t_2}
    \le \exp\left(-\eps'^2 s^2 t_2/\binom{n}{2}\right)
    = \exp(-\Theta(n\log^{-1/6}{n})).
  \]
  Here we used the general bound $\binom{a-b}{c}/\binom{a}{c}\le\exp(-bc/a)$.
  By the union bound over the choices of $A,B$, the probability that the event in the statement does not hold is at most
  \[
    \binom{n}{\eps' s}^2 \exp(-\Theta(n\log^{-1/6}{n}))
    \le \exp(\eps' s\log\log{n} -\Theta(n\log^{-1/6}{n})) = o(1).\qedhere
  \]
\end{proof}

\Cref{cl:D:exp,lem:DFS} imply that $D$ has, \whp{}, a directed path of length $(1-2\eps')s$.
By the way we have defined $D$, this implies the existence of a path $Q$ of length $(1-3\eps')n$ in Builder's graph.
To analyse the number of purchased edges at this stage,
we note that at any stage an edge is purchased with probability $\asymp s^2/n^2$,
hence the expected number of purchased edges is $\asymp t_2s^2/n^2\asymp n/\log^{1/6}{n}$.
By Chernoff bounds (\cref{thm:chernoff}), the number of purchased edges is sublinear \whp{}.

\paragraph{Stage III (Preparing the ground)}
\stage{$\frac{1}{\eps'}n\log{n}$}{$36\eps' n$}
Let $q_1,q_2$ be the endpoints of $Q$.
Let $V_1\cup V_2=V\sm V(Q)$ be a partition of the vertices outside $Q$
(so $|V_i|=3\eps' n/2$ for $i=1,2$).
Builder performs the next two tasks simultaneously.

\subparagraph{(Connecting the endpoints of $Q$ to $V_1,V_2$)}
Builder claims the first observed edge from $q_i$ to $V_i$ for each $i=1,2$
(unless such an edge already exists).
After $cn\log{n}$ steps, for any $c>0$, there will be \whp{} a neighbour of each of $q_i$ in $V_i$.
Call this neighbour $w_i$.

\subparagraph{(Constructing expanders on $V_1,V_2$)}
Builder purchases a copy of $\cO_{12}$ in $V_i$.
Observing that every new edge falls inside $V_i$ with probability at least $2\eps'^2$,
we conclude, using \cref{thm:chernoff,lem:gnt:deg:hit,cor:ok:size},
that this could be done \whp{} by observing at most $\frac{1}{\eps'}n\log{n}$ edges
and purchasing at most $36\eps' n$ of them.
By \cref{lem:ok:exp}, the obtained graphs are both $\beta n$-expanders for some $\beta>0$.
For $i=1,2$ let $B_i=B[V_i]$ denote Builder's graph on the vertex set $V_i$.

\paragraph{Stage IV (Sprinkling)}
\stage{$O(n)$}{$3\eps'n$}
By \cref{lem:sprinkling,thm:chernoff} there exists $C_4=C_4(\beta,\eps')>0$
such that by observing at most $C_4n$ edges
and purchasing at most $\frac{3}{2}\eps'n$ edges among those landing inside each $V_i$,
both $B_1,B_2$ become \whp{} Hamiltonian.
Conditioning on that event,
denote the sets of endpoints of Hamilton cycles in $B_i$ whose other endpoint is $w_i$ by $Y_i$
for $i=1,2$.
By \cref{lem:manyends},
$|Y_i|\ge \beta n$.

\paragraph{Stage V (Closing a Hamilton cycle)}
\stage{$\log{n}$}{$1$}
To close a Hamilton cycle, Builder purchases an edge between $Y_1$ and $Y_2$.
The probability of failing to do so after observing $\log{n}$ steps is $o(1)$.
\qed

\subsection{Perfect matchings}\label{sec:pm}
In this section we prove \cref{thm:pm}.
Recall the constant $k_H$ from \cref{thm:ok:ham},
and let $\eps'=\eps/k_H$.
To aid in understanding the proof, we refer the reader to
\cref{fig:pmprf},
which illustrates the inclusion relationships and relative sizes
of sets constructed during the first three stages.

\paragraph{Stage I (Constructing a $k$-core)}
\stage{$\Theta(n)$}{$\eps'n$}
Let $k$ be some large (even) constant to be determined later (in Stage~VI).
Choose $\eps_1=\eps'/(10k)$
and let $V_1$ be an arbitrary vertex set of size $\sim \eps_1 n$.
During the first $t_1\sim 9\eps_1^{-1}kn$ steps Builder purchases every edge that lands inside $V_1$.
By that time, \whp{}, at least $8k\eps_1 n$
(and at most $10k\eps_1 n=\eps'n$) edges
are being purchased.
Thus, by \cref{cor:core}, \whp{}, the obtained graph contains a $k$-core
on vertex set $U_1\subseteq V_1$
of size at least $\eps_1n/2$.
Write $\eta_1=|U_1|/n$
and note that $\eta_1\le\eps_1\le\eps'/10$.

\paragraph{Stage II (Constructing a large matching)}
\stage{$\Theta(n)$}{$n/2$}
Let $\eps_2=\eps'/3$.
Let $Y=V\sm U_1$;
so $|Y|=(1-\eta_1)n$. 
Let $C_2=C_2(1,\eps_2)$ be the constant guaranteed by \cref{lem:kmatching}.
Builder follows a $(t_2,b_2)$-strategy proposed by \cref{lem:kmatching}
to construct a matching $M_2$ with vertex set $X_2\subseteq Y$
where $|X_2|\ge|Y|(1-\eps_2)\sim(1-\eps_2)(1-\eta_1)n$.
The lemma guarantees that this is doable, \whp{}, for $t_2\sim C_2n$
and $(1-\eps)n/2 \le b_2\le n/2$.
Write $\eta_2=1-|X_2|/n$,
and note that $\eta_1 \le \eta_2 \le \eta_1+\eps_2$.

\paragraph{Stage III (Extending the large matching)}
\stage{$\eps'n\log{n}$}{$\eps n/3$}
Denote the vertices outside $X_2\cup U_1$ by $W_2$.
Recall that $|W_2|= n-(|X_2|+|U_1|)=(\eta_2-\eta_1)n$.
Write $\eta_3=\eta_2-\eta_1\le\eps_2$.
During the next $t_3\sim \eps'n\log{n}$ steps
Builder (partially) constructs $\cO_{k_H}$ on $W_2$.
The number of purchased edges is \whp{} at most
$k_H\eta_3n\le k_H\eps_2 n\le
\eps n/3$.
Let $S$ be the set of vertices of degree less than $k_H$ in $W_2$
at the end of this process.
We now show that, \whp{}, $|S|\le n^{1-\delta_3}$ for $\delta_3=\eta_3\eps'/2$.
Indeed, let $w\in W_2$.
The probability that the next edge contains $w$ and is contained in $W_2$
is $p_3\sim 2\eta_3/n$.
Thus, the total number of observed edges incident to $w$
stochastically dominates a binomial random variable with $t_3$ attempts
and success probability $p_3$.
Consequently, the probability that edges containing $w$ were observed less than $k_H$ times
is, by \cref{cl:bin:lowtail}, at most $n^{-\eta_3\eps'}$.
Hence, by Markov's inequality, $\pr(|S|\ge n^{1-\delta_3}) \le \eta_3 n^{1-\eta_3\eps'}/n^{1-\delta_3}\ll 1$.
We argue that $B[W_2]$ contains a matching
that covers all but $2k_Hn^{1-\delta_3}$ vertices.
Indeed,
if we had continued constructing the copy of $\cO_{k_H}$ without any time restrictions,
we would have had -- according to \cref{thm:ok:ham} -- a (nearly) perfect matching there.
But, deterministically, we would have purchased at most $k_H|S|$ additional edges to achieve that goal.
Thus, \whp{}, the current largest matching covers all but at most $2k_H n^{1-\delta_3}$ vertices in $W_2$.
Denote this matching by $M_3$, and let $X_3=V(M_3)$.

\begin{figure*}[t!]
  \captionsetup{width=0.879\textwidth,font=small}
  \centering
  \begin{tikzpicture}
    \clip (-0.2,-0.2) rectangle (10.2,5.6);

    \draw[rounded corners] (0,0) rectangle (10,5); 
    \node (V) at (0.25,5.25) {$V$};
    \draw[dashed,rounded corners] (0.1,0.1) rectangle (3,4.9); 
    \node[rotate=90] (V1) at (2.75,2.5) {$V_1$};
    \draw[rounded corners] (0.2,0.2) rectangle (1.5,4.8); 
    \node[rotate=270] (U1) at (0.45,2.5) {$|U_1|=\eta_1n$};
    \draw[dotted,rounded corners] (1.6,0.2) rectangle (9.9,4.8); 
    \node (Y) at (5.75,4.45) {$|Y|=(1-\eta_1)n$};
    \draw[blue,densely dashdotted,rounded corners] (1.7,1.5) rectangle (9.8,4.7); 
    \node[blue] (X2) at (5.75,1.75) {$|X_2|=(1-\eta_2)n$};
    \draw[rounded corners] (1.7,0.3) rectangle (9.8,1.4); 
    \node (W2) at (5.75,1.05) {$|W_2|=(\eta_2-\eta_1)n$};
    \draw[blue,densely dashdotted,rounded corners] (2.2,0.4) rectangle (9.7,1.3); 
    \node[blue] (X3) at (8.75,0.65) {$|X_3|\sim|W_2|$};

    \foreach \x in {0,1,3,4} {
      \draw[red] (2.6+1.5*\x,1.75) -- ($(2.6+1.5*\x,1.75)+(30:1)$);
    }
    \foreach \x in {0,1,3,4} {
      \draw[red] (2+1.5*\x,2.625) -- ($(2+1.5*\x,2.625)+(30:1)$);
    }
    \foreach \x in {0,...,4} {
      \draw[red] (2.3+1.5*\x,3.5) -- ($(2.3+1.5*\x,3.5)+(30:1)$);
    }
    \node[red] (M2) at (5.75,3.1) {$M_2$};
    \foreach \x in {0,...,3} {
      \draw[red] (2.3+1.5*\x,0.5) -- ($(2.3+1.5*\x,0.5)+(15:1)$);
    }
    \node[red] (M3) at (3.4,1) {$M_3$};
  \end{tikzpicture}
  \caption{A diagram illustrating
           the inclusion relationships and relative sizes
           of the sets $V,V_1,U_1,Y,X_2,W_2,X_3$
           constructed during Stages I--III in the proof of \cref{thm:pm},
           in addition to the location of the matchings $M_2,M_3$.
           }
  \label{fig:pmprf}
\end{figure*}

\paragraph{Stage IV (Building stars)}
\stage{$(1+\eps')n\log{n}/2$}{$o(n)$}
We append the matching constructed in Stage~III to the matching constructed in Stage~II;
namely, we set $M=M_2\cup M_3$, $X=X_2\cup X_3$ and $W=V\sm(X\cup U_1)$.
We recall that $|W|\le n^{1-\delta_4}$ for some $0<\delta_4<\delta_3$
and $|X|\sim(1-\eta_1)n$.
Let $K>1/(2\eta_1\eps')$ be an integer.
In this stage Builder attempts to construct disjoint $K$-leaf stars at each vertex of $W$,
with leaves inside $X$, where each edge of $M$ contains at most one leaf.
For $w\in W$ let $x_w$ denote the number of leaves in the star that is rooted at $w$
that Builder managed to purchase by the end of the stage.
We show that \whp{} for every $w\in W$, $x_w=K$, hence the stage is successful.
Indeed, whenever an edge arrives it has probability $2/n$ to be incident to $w$.
The other end of such an edge is incident to an available edge of $M$
(i.e., an edge which does not already contain a leaf of a star constructed at this stage)
with probability at least $(|X|-2K|W|)/n\sim 1-\eta_1$.
Thus, $x_w$ stochastically dominates a binomial random variable with
$t_4=(1+\eps')n\log{n}/2$ attempts
and success probability $p_4=(1-2\eta_1)\cdot 2/n$.
Therefore, by \cref{cl:bin:lowtail} (since $\eps'<1$ and $2\eta_1<\eps'$),
\[
  \pr(x_w<K)
  \le \left(\frac{et_4p_4}{K(1-p_4)}\right)^K e^{-t_4p_4}
  = (\Theta(\log{n}))^K \cdot n^{-(1+\eps')(1-2\eta_1)}
  \ll n^{-1}.
\]
By the union bound over all $o(n)$ vertices of $W$,
the probability that any $x_w$ is below $K$ is $o(1)$.

\paragraph{Stage V (Building $3$-paths)}
\stage{$\eps' n\log{n}$}{$o(n)$}
For every $w\in W$ let $a^w_1,\ldots,a^w_K\in X$ be the leaves of the stars rooted at $w$
that were constructed in the previous stage.
For $i=1,\ldots,K$, let $b^w_i\in X$ be the neighbour of $a^w_i$ in $M$.
Builder now purchases every edge that connects $b^w_i$
(for some $w\in W$ and $i\in[K]$)
to a vertex $u\in U_1$,
as long as none of $b_1^w,\ldots,b_K^w,u$ was touched earlier during this stage.
Fix $w\in W$.
The probability that in a given step an edge between (an untouched vertex of)
$U_1$ and $\{b_1^w,\ldots,b_K^w\}$ appears
is at least $p_5=K(|U_1|-|W|)/\binom{n}{2}\sim 2K\eta_1/n$.
Thus, the probability that in $t_5=\eps'n\log{n}$ steps such an edge does not appear
is at most $e^{-p_5t_5}=n^{-2K\eta_1\eps'}$.
Noting that $K>(2\eta_1\eps')^{-1}$ we get that this probability is $o(n^{-1})$,
hence, by the union bound over all $o(n)$ vertices of $W$,
we match one leaf in the star rooted at $w$ with a unique vertex in $U_1$ for every $w\in W$ \whp{}.

\paragraph{Stage VI (Sprinkling)}
\stage{$\Theta(n)$}{$\eps'n$}
We note that each of the $3$-paths constructed in the previous stage can be used as an augmenting path.
Namely, we let $M'$ be the matching obtained by $M$ by replacing every edge $e\in M$
that is contained in such a $3$-path (as a middle edge) by the other two edges of that path.
Let $U'$ be the set of vertices in $U_1$ matched in $M'$,
and write $U^*=U_1\sm U'$.
Thus, $M'$ matches all vertices but those in $U^*$.
We now show that $B[U^*]$ is \whp{} an expander.
For that, note that $U'$ is a uniformly chosen subset of size $|W|\le n^{1-\delta_4}$ of $U_1$.
We first argue that $\delta(B[U^*])\ge k/2$ \whp{}.
Indeed, fix $u\in U_1$, and let $u_1,\dots,u_k$ be arbitrary neighbours of $u$ in $B[U_1]$.
The probability that $d(u,U^*)\le k/2$ is at most
the probability that $k/2$ of these $k$ neighbours end up in $U'$.
By the union bound, this is at most $\binom{k}{k/2}n^{-\delta_4}$
\[
  \binom{k}{k/2} \frac{\binom{|U_1|-k/2}{|W|-k/2}}{\binom{|U_1|}{|W|}}
  \le \left(4\cdot\frac{|W|}{|U_1|}\right)^{k/2}
  \asymp n^{-\delta_4k/2}.
\]
Making sure $k>2/\delta_4$, this is $o(n^{-1})$,
and hence, by the union bound, \whp{}, $\delta(B[U^*])\ge k/2$.
We remark that there is no circular dependency of the constants here.

We recall that by \cref{lem:span}
(taking $t=t_1\sim 9kn$ and $R=3\beta_6 n$ for $\beta_6=3^{-5}k^{-3/2}$)
the random graph process $G_t$, and hence also $B[U^*]$,
typically does not have a set of size $r\le 3\beta_6 n$ that spans more than $3r$ edges.
This, together with the minimum degree of $B[U^*]$, implies expansion;
indeed, let $A\subseteq U^*$ be a vertex set with $|A|=a\le R/3 = \beta_6n$.
Suppose $|N(A)|<2|A|$ and write $S=A\cup N(A)$ (so $r=|S|<3a\le R$).
But the number of edges spanned by $S$ is at least $\delta(B[U^*])|A|/2\ge ka/4 > kr/12\ge 3r$
(making sure $k\ge 36$),
a contradiction to \cref{lem:span}.
Thus, $B[U^*]$ is a $\beta_6n$-expander.
By \cref{lem:sprinkling} there exists $C_6=C_6(\beta_6)$
such that adding to $B[U^*]$ $C_6\eta_1 n$ random non-edges
makes it, \whp{}, Hamiltonian.
Letting $t_6=C_6'n/\eta_1$ for $C_6'>C_6$,
it follows from \cref{thm:chernoff} that after observing $t_6$ edges,
at least $C_6\eta_1 n$ of them land, \whp{}, inside $U^*$ (whose size is $\sim\eta_1 n$).
Builder purchases each of these edges.
Thus, by \cref{lem:sprinkling},
Builder's resulting graph on $U^*$ contains, \whp{}, a Hamilton cycle.
That Hamilton cycle contains a perfect matching $M_6$.
Appending $M_6$ to $M$, we get a perfect matching in Builder's graph.
\qed

\section{Small subgraphs}\label{sec:small}

\subsection{Trees}\label{sec:trees}

\subsubsection*{$1$-statement}
\begin{proof}[Proof of the $1$-statement of \cref{thm:trees}]

  Let $T_1\subseteq\ldots\subseteq T_k$ be a sequence of subtrees of $T$,
  $T_i$ having $i$ vertices (so $T_k=T$).
  We first note that if $t\gg n$ then a budget $b=k-1$ suffices with high probability.
  Indeed, 
  suppose Builder has constructed $T_i$ (note that he builds $T_1$ with a zero budget).
  To build $T_{i+1}$, Builder waits for one of at least $\sim n$ edges
  whose addition to his current copy of $T_i$ would create a copy of $T_{i+1}$.
  By time $t/(k-1)$ Builder observes (and purchases) such an edge \whp{}.
  Thus, by time $t$ Builder constructs, \whp{}, a copy of $T$.

  We may thus assume from now on that $t\le cn$
  for some $0<c<1/5$ and $t\ge b\gg (n/t)^{k-2}$;
  by monotonicity, the result extends to the remaining range.
  In particular, $t\gg n^{1-1/(k-1)}$.
  For $i=1,\ldots,k$ define
  \[
    s_i = \frac{b}{k-1}\cdot\left(\frac{t}{(k-1)n}\right)^{i-2},
  \]
  and note that $s_i$ is a strictly monotone decreasing sequence
  with $n\ge s_1 > s_k\gg 1$.
  We describe Builder's strategy in stages
  (for convenience, the first stage will be indexed by $2$).
  We assume that at the beginning of stage $i$, $i=2,\ldots,k$,
  Builder has built $s_{i-1}$ vertex-disjoint copies of $T_{i-1}$.
  The assumption trivially holds for $i=2$, since $s_1\le n$.
  At stage $i\ge 2$, Builder purchases any edge extending one of the $s_{i-1}$ copies of $T_{i-1}$
  he currently has to a copy of $T_i$,
  so that the resulting trees are still vertex-disjoint.
  He continues doing so only as long as he does not have $s_i$ copies of $T_i$
  (in which case this stage is successful)
  or if he had observed more than $t/(k-1)$ edges during this stage
  (in which case this stage fails, and thus the entire strategy fails).
  For stage $i$ and $1\le j\le t/(k-1)$ let $y^i_j$ denote be the event
  that the $j$'th observed edge of stage $i$ is being purchased,
  or that $s_i$ copies of $T_i$ have already been built.
  Being at stage $i$, there are at least $s_{i-1}-s_i$ potential trees to extend,
  each can be extended by observing one of at least $n-2b$ edges.
  Thus,
  noting that $s_{i-1}-s_i\ge s_{i-1}(1-c/(k-1))$
  and $n-2b\ge n-2t \ge n(1-2c)$,
  and conditioning on all possible values of $y^i_{j'}$ for $1\le j'<j$,
  we have
  \[
    \pr(y^i_j) \ge
    \frac{(s_{i-1}-s_i)(n-2b)}{\binom{n}{2}}
    \ge \frac{2(1-\frac{c}{k-1})s_{i-1}\cdot(1-2c)}{n}.
  \]
  Write $c'=2(1-c/(k-1))(1-2c)$ and note that for small enough $c>0$ ($c<1/5$ suffices) we have $c'>1$.
  Therefore (by \cref{prop:chernoff}), $y^i=\sum_j y^i_j$ stochastically dominates a binomial random variable
  with mean $\frac{t}{k-1}\cdot\frac{c's_{i-1}}{n} = c's_i$.
  By Chernoff bounds (\cref{thm:chernoff}), the stage is successful \whp{}.
  Thus, Builder builds, \whp{}, $s_k\ge 1$ copies of $T_k=T$ by the end of $k-1$ stages, that is, by time $t$.
\end{proof}

\subsubsection*{$0$-statement}
\begin{proof}[Proof of the $0$-statement of \cref{thm:trees}]
  We begin by assuming that $b\ge k-1\ge 2$;
  otherwise, the statement is trivial.
  We also assume that $b\ll (n/t)^{k-2}$,
  which implies that $t\ll n$.
  Moreover, we may assume that $b\ll t$.
  Indeed, suppose that $b\asymp t$.
  Then, from $b\ll (n/t)^{k-2}$,
  it follows that $t\ll (n/t)^{k-2}$,
  leading to $t\ll n^{1-1/(k-1)}$.
  This implies that \whp{} no tree on $k$ vertices exists in the graph of the observed edges.

We prove that any $(t,b)$-strategy of Builder fails, \whp{}, to build a connected component of size $k$.
Define a sequence $(r_i)_{i=1}^{k-1}$ by $r_1=n$ and $r_i=b(3kt/n)^{i-2}$ for $i=2,\ldots,k-1$.
We argue that \whp{} Builder cannot build more than $r_i$ connected components of size $i$.
The proof is by induction on $i$.
The case $i=1$ is trivial.
The case $i=2$ follows deterministically by the budget restriction, as $r_2=b$.
To build a connected component of size $2<i<k$,
Builder must observe an edge connecting a connected component of size $1\le j<i$ with a connected component of size $i-j\ge j$.
By induction, the probability that the next observed edge is such,
conditioning on any previous sequence of failures and successes, is at most
\[
  \frac{\sum_{j=1}^{\floor{i/2}} (jr_j \cdot (i-j)r_{i-j})}{\binom{n}{2}-t}
  \le \frac{inr_{i-1} + \Theta\left(b^2(t/n)^{i-4}\right)}{\binom{n}{2}-t}
  \le \frac{2.5kr_{i-1}}{n}.
\]
Here we used the fact that $b\ll t$, hence $b^2(t/n)^{i-4}\ll nr_{i-1}$.
Thus, by Chernoff bounds (\cref{thm:chernoff} and \cref{prop:chernoff}),
during $t$ rounds the number of such edges is, \whp{}, smaller than $r_{i-1}\cdot 3kt/n=r_i$.
To build a tree of order $k$, Builder must construct a connected component of size at least $k$.
For that, Builder must observe an edge connecting a connected component of size $1\le i\le k-1$
with a connected component of size $\max\{i,k-i\}\le j\le k-1$ (in his graph).
The probability that the next observed edge is such is at most
\[
  \frac{\sum_{i=1}^{k-1}\sum_{j=\max\{i,k-i\}}^{k-1} (ir_i\cdot jr_j)}{\binom{n}{2}-t}
  \preceq \frac{r_{k-1}}{n}.
\]
Hence, during $t$ rounds the expected number of edges Builder observes that create such a connected component
is $\preceq r_{k-1}t/n\ll 1$;
thus, Builder fails to construct such a component \whp{}.
\end{proof}

\subsection{Cycles}\label{sec:cycles}
For a fixed graph $H$ we call a non-edge $e$ in Builder's graph an \defn{$H$-trap}
(or, simply, a trap)
if $B+e$ contains a copy of $H$.

\subsubsection*{$1$-statement}

\begin{proof}[Proof of the $1$-statement of \cref{thm:cycles}]

  Let $b\gg b^*=b^*(n,t,k)$.
  As $b^*\ll n$, we may assume $b\ll n$.
  Choose $n\ll t'\ll t$ such that it still holds that $b\gg b':=b^*(n,t',k)$.
  Set $s=n^2/(b't')=\min\{(t'/n)^k,n/\sqrt{t'}\}$,
  and note that $1\ll s\ll b$.
  Indeed, $b/s\gg b'/s = n^2/(s^2t')\ge 1$.
  Denote $d=cs^{1/k}$ for $c<1/(k+5)$
  and note that $d\gg 1$.
  Note further that $dn=cs^{1/k}n\le ct'$
  and that $c^k<1/6$.

  \paragraph{Stage I (Growing $d$-ary trees)}
  \stage{$t'$}{$b/2$}
  Builder performs Stage I in $k$ rounds, each lasting $dn$ steps.
  Set $r=b/s\gg 1$.
  Builder chooses $2^k r$ vertices arbitrarily, call them {\em roots}.
  Denote by $P_0=\{v_1,\dots,v_{2^kr}\}$ the set of these roots,
  and let $L_0=R_0=P_0$
  (we think of each root as being both on the right side of a tree and on the left side of a tree).
  Suppose at the beginning of round $i\in[k]$
  we have a set $J_{i-1}\subseteq[P_0]$ with $|J_{i-1}|=2^{k-(i-1)}r$
  and trees $(T_j)_{j\in J_{i-1}}$,
  where the tree $T_j$ is rooted at $v_j$ and is of depth $i-1$
  (so at the beginning of round $1$ these trees are isolated roots).
  Write $L_i^j$ for the set of vertices in $L_i$ which are in tree $j$,
  and define $R_i^j$ analogously.
  Builder's goal at round $i$
  is to ``extend'' {\em half} of these trees,
  by attaching $d|L_{i-1}^j|$ leaves to vertices in $L_{i-1}^j$
  and $d|R_{i-1}^j|$ (distinct) leaves to vertices in $R_{i-1}^j$.
  Each of the new leaves should not be already covered by Builder's graph.
  A leaf attached to $L_{i-1}^j$ will be placed in $L_i^j$,
  and a leaf attached to $R_{i-1}^j$ will be placed in $R_i^j$.
  Note that if $i=1$ then $L^j_{i-1}=R^j_{i-1}=\{v_j\}$,
  in which case we attach {\em two} $d$-leaf stars at each root $v_j$.
  Let $J_i\subseteq J_{i-1}$ be the subset of indices for which
  $T_j$ has been successfully extended.
  Set $L_i=\bigcup_{j\in J_i} L_i^j$ and $R_i=\bigcup_{j\in J_i} R_i^j$.
  Note that if Builder's strategy has not failed by the beginning of round $i$,
  $|J_{i-1}|= 2^{k-(i-1)}r$
  and $|L_{i-1}^j|=|R_{i-1}^j|=d^{i-1}$ for every $j\in J_{i-1}$.
  For $i\in[k]$ and $j\in J_{i-1}$
  let $x_i^{(j)}$ be the number of edges observed at round $i$
  with one endpoint in $L_{i-1}^j$
  and the other outside the set of vertices covered by the edges of Breaker's graph at the time of observation.
  If $|x_i^{(j)}|\ge d|L_{i-1}^j|$ then Builder purchases the first $|L_{i-1}^j|$ of those observed edges
  and this stage is ``left-successful'' for tree $j$.
  Similarly, let $y_i^{(j)}$ be the number of edges observed at round $i$
  between $R_{i-1}^j$ and the uncovered part of the graph.
  If $|y_i^{(j)}|\ge d|R_{i-1}^j|$ then Builder purchases the first $|R_{i-1}^j|$ of those observed edges
  and this stage is ``right-successful'' for tree $j$.
  If a stage is both left-successful and right-successful for a tree $j$ then $T_j$ is successfully extended.
  As we remarked earlier, after extending successfully $2^{k-i}r$ trees, Builder stops.

  Let us analyse this strategy.
  For $0\le i\le k$,
  denote by $b_i$ the number of covered vertices by the end of step $i$.
  Observe that
  \[
    b_i \le \sum_{i'=0}^i 2^{k+1-i'}rd^{i'}
    \le (1+o(1))2^{k-i+1}rd^i
  \]
  (since $d\gg 1$).
  In particular, $b_k<3rd^k=3c^kb\ll n$.
  Take $i\in[k]$ and assume the strategy was successful so far.
  Thus, $x_i^{(j)}$ (and also $y_i^{(j)}$) stochastically dominates a binomial random variable
  with $dn$ attempts and success probability at least
  $|L_{i-1}^j|\cdot(n-b_i)/\binom{n}{2} \sim 2d^{i-1}/n$.
  Thus, by Chernoff bounds (\cref{thm:chernoff}),
  $\pr(x_i^{(j)} < d|L_{i-1}^j|) = \pr(x_i^{(j)}<d^i) < \exp(-d^i/5)\ll 1$.
  By Markov's inequality, \whp{} for at least half of the trees indexed by $J_{i-1}$
  Builder succeeds in extending the tree.
  The total time that passes until the end of this stage is at most $kdn\le ckt' < t'\ll t$,
  and the total budget is at most $3c^kb < b/2$.

  \paragraph{Stage II (Connecting leaves)}
  \stage{$t-t'$}{1}
  Note that for every $j\in J_k$, if $u\in L_k^j$ and $v\in R_k^j$ then $u,v$ are both leaves of $T_j$
  of distance $2k$ from each other.
  Thus, by adding the edge $\{u,v\}$, a cycle of length $\ell$ is formed.
  Thus, the number of traps is $|J_k|\cdot|L_k^j|\cdot|R_k^j|=rd^{2k}  \asymp rs^2 = bs \gg n^2/t'$.
  Thus, after time $t-t'\gg t'$ we hit a trap \whp{}.

  \paragraph{Even cycles}
  We briefly discuss the modifications needed to obtain the result for cycles of length $2k+2$.
  Instead of choosing $P_0$, Builder greedily constructs a matching $M_0$ of size $2^kr$;
  this is doable (\whp{}) since the first $Cr$ observed edges (recalling that $r\ll n$)
  are mostly disjoint, and, in particular, for large enough $C$, contain a matching of size $2^kr$.
  Then, for each edge in $M_0$,
  Builder puts one vertex in $L_0$ and one vertex in $R_0$.
  Builder continues in the same fashion as in the case of odd cycles,
  where the only difference is that instead of $L_0=R_0$ we have $L_0\cap R_0=\es$.
  We finish by noting that an edge between $u\in L_k^j$ and $v\in R_k^j$ now closes a cycle of length $2k+2$.
\end{proof}

\subsubsection*{\texorpdfstring{$0$}{0}-statement}
Let $H=C_\ell$ for $\ell=2k+1$ or $\ell=2k+2$.
The $0$-statement in \cref{thm:cycles} follows from combining the next two claims
together with the known fact that if $t\ll n$ then $G_{n,t}$, \whp{}, contains no copy of $H$.

\begin{claim}\label{cl:cycles:lb:univ}
	If $b\ll n/\sqrt{t}$
	then for any $(t,b)$-strategy $B$ of Builder,
	\[
	\lim_{n\to\infty} \pr(H\subseteq B_t) = 0.
	\]
\end{claim}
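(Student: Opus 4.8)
The plan is to bound the number of $H$-traps in Builder's graph at any time by a function of the budget, and then show that if this count is $o(n^2)$, Builder fails to hit a trap within $t$ steps whp. The starting observation is that at any point, Builder's graph $B_s$ has at most $2b$ vertices of positive degree, since each purchased edge contributes at most two such vertices. Any $H$-trap is a non-edge $\{x,y\}$ whose addition to $B_s$ creates a copy of $C_\ell$; in particular $\{x,y\}$ then lies on that cycle, so both $x$ and $y$ must have positive degree in $B_s$ (each is incident to an edge of $C_\ell - \{x,y\} \subseteq B_s$). Hence the number of traps is at most $\binom{2b}{2} = O(b^2)$.

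Next I would translate this into a failure probability. Let $T_s$ denote the set of $H$-traps with respect to $B_{s-1}$, so $|T_s| = O(b^2)$ deterministically for every $s \le t$. Builder hits a trap at step $s$ only if the edge presented at time $s$ lies in $T_s$. Conditioning on the history $B_{s-1}$ (which determines $T_s$), the edge presented at time $s$ is uniform among the $\binom{n}{2} - (s-1)$ not-yet-presented edges, so the conditional probability of hitting a trap at step $s$ is at most $|T_s| / (\binom{n}{2} - t) = O(b^2/n^2)$, using $t \ll n^2$. Taking a union bound over $s = 1, \dots, t$, the probability that Builder ever hits an $H$-trap is at most $t \cdot O(b^2/n^2) = O(tb^2/n^2)$. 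Under the hypothesis $b \ll n/\sqrt{t}$ we have $tb^2/n^2 \ll 1$, so whp Builder never hits a trap.

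Finally I would note that if Builder never hits a trap up to time $t$, then $B_t$ contains no copy of $H$: indeed, if $H \subseteq B_t$, consider the last edge $e$ of that copy of $H$ to be purchased by Builder; at the moment just before $e$ was purchased, $e$ was an $H$-trap (the rest of the copy of $H$ was already present in Builder's graph), and purchasing $e$ means Builder hit that trap — a contradiction. (If the copy of $H$ uses no purchased edges it is empty, which is absurd for $\ell \ge 3$.) Hence $\pr(H \subseteq B_t) \to 0$, as claimed. The main obstacle here is essentially bookkeeping: being careful that the bound $|T_s| = O(b^2)$ holds deterministically and uniformly in $s$, so that the union bound over the $t$ steps goes through cleanly; no probabilistic concentration is needed beyond a crude union bound.
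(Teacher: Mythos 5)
Your proposal is correct and follows essentially the same route as the paper: bound the number of $H$-traps by $O(b^2)$ via the observation that both endpoints of a trap have positive degree in Builder's graph (at most $2b$ such vertices), then conclude that the probability of ever hitting a trap is $\preceq tb^2/n^2 \ll 1$. Your version is just slightly more explicit about the uniform-in-$s$ union bound and the "last purchased edge was a trap" step; like the paper, you should note that the reduction to $t \ll n^2$ is harmless because otherwise $b \ll n/\sqrt{t} \preceq 1$ forces $b=0$.
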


\begin{claim}\label{cl:cycles:lb:spec}
  If $b\ll n^{k+2}/t^{k+1}$
	then for any $(t,b)$-strategy $B$ of Builder,
	\[
	\lim_{n\to\infty} \pr(H\subseteq B_t) = 0.
	\]
\end{claim}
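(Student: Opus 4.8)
The plan is to show that, \whp{}, Builder never purchases an edge that completes a copy of $H=C_\ell$. Note that $H\subseteq B_t$ precisely when, for some $\tau\le t$, the edge presented at time $\tau$ is an $H$-trap with respect to $B_{\tau-1}$. Since a non-edge $\{x,y\}$ is a $C_\ell$-trap of a graph $B$ exactly when $B$ contains an $x$–$y$ path with $\ell-1$ edges, the number of traps of $B_\tau$ is at most $N_\tau$, the number of paths with $\ell-1$ edges in $B_\tau$. Let $\mathcal E$ be a good event of the underlying process (specified below) on which $N_\tau\le T_0$ for every $\tau\le t$. Since the edge presented at each step is uniform over the at least $\binom n2-t$ not-yet-presented pairs, and $\{N_\tau\le T_0\}$ depends only on $B_\tau$, a union bound over $\tau$ gives
\[
  \pr(H\subseteq B_t)\le\pr(\mathcal E^c)+(1+o(1))\frac{2tT_0}{n^2}.
\]
It thus suffices to produce $\mathcal E$ with $\pr(\mathcal E^c)=o(1)$ and $T_0\ll n^2/t$.

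We may assume $t\le n^{1+1/(2k+1)}$: for larger $t$ one checks that $n^{k+2}/t^{k+1}\le n/\sqrt t$, so $b\ll n/\sqrt t$ and \cref{cl:cycles:lb:univ} applies directly. Writing $t/n=n^\alpha$ (so $0\le\alpha\le 1/(2k+1)$), the hypothesis $b\ll n^{k+2}/t^{k+1}=n^{1-(k+1)\alpha}$ then forces $b\ll n^{1-3\alpha}$ when $k\ge2$ — this is the one point where the precise form of the threshold is used. A standard first-moment computation (over minimal topologically dense configurations, using that a subgraph with at most $b$ edges has at most $2b=o(n)$ non-isolated vertices) shows that \whp{} $G_t$ has no vertex set of size at most $2b$ spanning at least $\tfrac32$ as many edges; hence every subgraph of $G_t$ with at most $b$ edges has degeneracy at most $2$ (for $k=1$ this step is unnecessary). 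Separately, a Chernoff-plus-union-bound argument shows that \whp{} $\Delta(G_t)\le\Delta_0:=C_0\max\{t/n,\ \log n/\log\log n\}$ for a suitable constant $C_0$. Let $\mathcal E$ be the intersection of these two events. As $B_\tau\subseteq G_t$ for every $\tau\le t$, on $\mathcal E$ each $B_\tau$ has degeneracy at most $2$ and maximum degree at most $\Delta_0$.

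The heart of the proof is the following deterministic estimate: a graph $F$ with at most $b$ edges, degeneracy at most $D$ and maximum degree at most $\Delta$ contains at most $c(m,D)\cdot b\cdot\Delta^{\floor{m/2}}$ paths with $m$ edges. To prove it, fix a degeneracy order of $F$ and orient every edge towards its earlier endpoint, so every out-degree is at most $D$. Given a path with $m$ edges, traverse it starting from the endpoint that makes the number of ``uphill'' edges (those traversed from their earlier to their later endpoint) at most the number of ``downhill'' edges; such an endpoint exists, so at most $\floor{m/2}$ of the $m$ steps are uphill. We reconstruct the path in that order: first pick the starting vertex — at most $2b$ choices, since $F$ has at most $2b$ non-isolated vertices — then at each downhill step pick one of the at most $D$ out-neighbours of the current vertex, and at each uphill step one of its at most $\Delta$ in-neighbours; summing over the at most $2^m$ patterns of uphill/downhill positions gives the bound. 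Applying this with $m=\ell-1\in\{2k,2k+1\}$, $D=2$ and $\Delta=\Delta_0$ yields $T_0=O_k(b\,\Delta_0^{\,k})$. Plugging into the displayed inequality, $\pr(H\subseteq B_t)\le o(1)+O_k(t\,b\,\Delta_0^{\,k}/n^2)$, and since $\Delta_0^{\,k}\asymp(t/n)^k$ up to a polylogarithmic factor (absent once $t\gg n\log n$, and otherwise swallowed by the slack in ``$\ll$''), the right-hand side is $o(1)$ whenever $b\ll n^{k+2}/t^{k+1}$.

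I expect the counting lemma — really, the handling of the uphill steps — to be the main obstacle: a naive traversal would lose a factor $\Delta^{m-1}$ instead of $\Delta^{\floor{m/2}}$, and retrieving this square-root-in-the-exponent saving is precisely where the bounded degeneracy is exploited, via the elementary observation that along any path at most half of the steps can be forced to be ``expensive''. A secondary technical nuisance is making the degeneracy bound hold simultaneously for all $b$-edge subgraphs of $G_t$ — note that $G_t$ itself is far from $O(1)$-degenerate once $t\gg n$ — which is why we pass to small subgraphs and restrict to $t\le n^{1+1/(2k+1)}$; and some extra care is needed in the window $n\lesssim t\lesssim n\log n$, where $\Delta_0$ is polylogarithmic rather than polynomial in $t/n$ (for $k=1$, where $N_\tau=\sum_v\binom{d(v)}{2}\le\Delta_0 b$ directly, the whole argument is more elementary).
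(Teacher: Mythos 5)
Your overall route is the paper's route: reduce to $t\le n^{1+1/(2k+1)}$ via \cref{cl:cycles:lb:univ}, bound the number of $C_\ell$-traps by the number of paths with $\ell-1$ edges, establish $O(1)$-degeneracy of all $b$-edge subgraphs of $G_t$ by a density first-moment computation (the paper's \cref{lem:span} and \cref{cl:degen}), count paths in a bounded-degeneracy graph of maximum degree $\Delta$ by orienting along a degeneracy order and paying $\Delta$ only on the at most $\lfloor(\ell-1)/2\rfloor$ ``uphill'' steps (identical to \cref{lem:path:embed}), and conclude with a bound of the form $tT_0/n^2=o(1)$.

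The one genuine gap is the case $k=1$, $H=C_4$. Your density threshold $\tfrac32 q$ (giving degeneracy $2$) requires $b\ll n^4/t^3$, which, as you note, follows from the hypothesis only when $k\ge2$; but your fallback for $k=1$ --- ``$N_\tau=\sum_v\binom{d(v)}{2}\le\Delta_0 b$'' --- counts paths with two edges and therefore only covers $C_3$. For $C_4$ you must count paths with three edges, and with no degeneracy input the best one can say from $b$ edges and maximum degree $\Delta_0\asymp t/n$ is $\asymp b\Delta_0^2$ (attained by disjoint unions of dense bipartite blocks), which gives $tT_0/n^2\asymp bt^3/n^4$ and hence needs $b\ll n^4/t^3$ --- strictly stronger than the $k=1$ hypothesis $b\ll n^3/t^2$ once $t\gg n$. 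So for $C_4$ the degeneracy step is not ``unnecessary''; the cure is exactly the paper's weaker density statement: \whp{} no set of $q\le 2b$ vertices of $G_t$ spans more than $3q$ edges (so every $b$-edge subgraph is $6$-degenerate), whose first-moment computation needs only $b\ll n^3/t^2$, i.e.\ the hypothesis for every $k\ge1$, and with degeneracy $6$ the count $O_k\bigl(b\,\Delta_0^{\lfloor(\ell-1)/2\rfloor}\bigr)$ still yields $T_0\ll n^2/t$. A smaller point: your assertion that the polylogarithmic maximum degree in the window $n\lesssim t\lesssim n\log n$ is ``swallowed by the slack in $\ll$'' is not justified --- with $t\asymp n$ and $b$ just below $n^{k+2}/t^{k+1}\asymp n$, the quantity $tb\Delta_0^k/n^2$ can diverge --- but the paper's own proof also invokes $t\gg n\log n$ at precisely this step, so this is a shared limitation rather than a divergence from the paper's argument.
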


\begin{proof}[Proof of \cref{cl:cycles:lb:univ}]
  We may assume $t\ll n^2$.
  Consider a strategy for building $H$.
  Let $B'$ be Builder's graph before obtaining the first copy of $H$.
  We show that at this point,
  the time it takes Builder to hit an $H$-trap
  (and hence construct a copy of $H$)
  is typically much larger than $t$.
  Let $X$ denote the number of $H$-traps at this point.
  Observe that every $H$-trap is a non-edge of $B'$,
  both whose endpoints are vertices of positive degree.
  Evidently, the number of vertices of positive degree is at most $2b$,
  thus $X\le 4b^2$.
  Thus, the probability of hitting a trap in $t$ steps is at most
  $tX/(\binom{n}{2}-t)\preceq tb^2/n^2\ll 1$.
\end{proof}

For the proof of \cref{cl:cycles:lb:spec} we will need a couple of lemmas.
In what follows, we may assume that $t\le n^{(k+1)/(k+1/2)}$,
as otherwise $n^{k+2}/t^{k+1}\le n/\sqrt{t}$ and the claim follows from \cref{cl:cycles:lb:univ}.

\begin{claim}\label{cl:badpaths}
  Let $\ell\ge 1$ be an integer, write $d=2t/n\gg 1$ and let $C>1$.
  Then, \whp{}, the number of paths of length $\ell$ in $G_t$ that contain a vertex whose degree is at least $Cd$
  is $o(n/d)$.
\end{claim}

\begin{proof}
  Let $L$ be the set of all vertices of degree at least $Cd$,
  and let $Q$ be the set of all paths containing a vertex from $L$.
  By Chernoff bounds (\cref{thm:chernoff}),
  \[
    \E|Q| \le n^{\ell+1} \left(\frac{t}{n^2}\right)^\ell (\ell+1) \cdot\pr(d(v)\ge Cd-2)
    \le nd^\ell e^{-cd} \ll n/d,
  \]
  for some constant $c=c(C)>0$,
  and the result follows from Markov's inequality.
\end{proof}

We turn to count the number of copies of a path of length $\ell$ in a $z$-degenerate graph $G_0$ with maximum degree $\Delta$.

\begin{lemma}\label{lem:path:embed}
  Let $z,q,\Delta,\ell\ge 1$ be integers.
  Let $G_0$ be a $z$-degenerate $q$-vertex graph with maximum degree $\Delta$.
  Then, the number of paths of length $\ell$ in $G_0$ is at most $q\cdot 2^\ell z^{\ceil{\ell/2}} \Delta^{\floor{\ell/2}}$.
\end{lemma}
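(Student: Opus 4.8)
The plan is to count paths by reconstructing them one vertex at a time along a fixed degeneracy ordering. Fix an ordering $\prec$ of $V(G_0)$ witnessing $z$-degeneracy, so that every vertex has at most $z$ neighbours preceding it in $\prec$ (its ``back-neighbours''); it trivially has at most $\Delta$ neighbours in total. We may assume $z\le\Delta$, since otherwise $G_0$ is $\Delta$-degenerate and the claimed bound follows upon replacing $z$ by $\Delta$. Given a path, list its vertices along the path as $v_0,v_1,\dots,v_\ell$, and say step $i\in[\ell]$ is a \emph{descent} if $v_i\prec v_{i-1}$. If we reveal $v_0,v_1,\dots,v_\ell$ in this order, then $v_0$ has at most $q$ choices, and for $i\ge 1$, given $v_{i-1}$ the vertex $v_i$ is a neighbour of $v_{i-1}$ --- a back-neighbour if $i$ is a descent, whence at most $z$ choices, and otherwise at most $\Delta$ choices. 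Hence the number of vertex-listings with a prescribed descent set $D\subseteq[\ell]$ is at most $q\cdot z^{|D|}\Delta^{\ell-|D|}$.

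The key point is that every path of length $\ell$ has a listing with at least $\ceil{\ell/2}$ descents: a path has exactly two listings, reversals of one another, and reversal interchanges descents with ascents, so if one listing has $d$ descents the other has $\ell-d$, and $\max\{d,\ell-d\}\ge\ceil{\ell/2}$. Picking such a listing for each path, we bound the number of paths of length $\ell$ by summing $q\cdot z^{|D|}\Delta^{\ell-|D|}$ over all $D\subseteq[\ell]$ with $|D|\ge\ceil{\ell/2}$. There are at most $2^\ell$ such sets, and since $z\le\Delta$ and $|D|\ge\ceil{\ell/2}$ we have $z^{|D|}\Delta^{\ell-|D|}\le z^{\ceil{\ell/2}}\Delta^{\floor{\ell/2}}$; this gives the bound $q\cdot 2^\ell z^{\ceil{\ell/2}}\Delta^{\floor{\ell/2}}$.

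The only non-routine ingredient is the descent observation above; the rest is bookkeeping, and I anticipate no genuine obstacle --- one should merely check the trivialities ($\ell$ small, $z>\Delta$, and the harmless possibility that a path with $d=\ell/2$ gets counted through both listings, which only weakens the inequality). In the intended application, this lemma will be combined with \cref{cl:degen} (so that $z=6$) and an upper bound on the maximum degree of $G_t$ to control the number of $H$-traps in Builder's graph, completing the proof of \cref{cl:cycles:lb:spec}.
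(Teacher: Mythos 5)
Your proof is correct and follows essentially the same route as the paper: fix a degeneracy ordering, note that one of the two orientations of any path of length $\ell$ has at least $\ceil{\ell/2}$ backward steps (descents), and count oriented paths by choosing a start vertex, the set of backward steps (at most $2^\ell$ choices), and then at most $z$ or $\Delta$ options per step. Your explicit reduction to the case $z\le\Delta$ is a harmless tidying of a point the paper's proof leaves implicit.
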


\begin{proof}
  Let $v_1,\ldots,v_q$ be an ordering of the vertices of $G_0$ such that for $1\le i\le q$, $v_i$ has at most $z$ neighbours among $v_1,\ldots,v_{i-1}$.
  Given a path $P$ of length $\ell$, one of its consistent orientations has at least $k$ edges going backwards (w.r.t.\ the orientation).
  Thus, to count the number of copies of $P$ in $G_0$,
  it suffices to count the number of {\em directed} paths of length $\ell$
  with at least $\ceil{\ell/2}$ edges going backwards.
  To count those, first choose a starting vertex ($n$ options),
  then choose the locations of the edges going back (at most $2^\ell$ options),
  and finally choose from at most $z$ options for an edge pointing back,
  or at most $\Delta$ options otherwise.
  Altogether, there are at most $q\cdot 2^\ell z^{\ceil{\ell/2}} \Delta^{\floor{\ell/2}}$ such paths.
\end{proof}

\begin{proof}[Proof of \cref{cl:cycles:lb:spec}]
  Consider a strategy for building $H$.
  Let $B'$ be Builder's graph before obtaining the first copy of $H$.
  We show that at this point,
  the time it takes Builder to hit an $H$-trap
  (and hence construct a copy of $H$)
  is typically much larger than $t$.
  Let $X$ denote the number of $H$-traps at this point.
	Observe that $X$ is bounded from above by the number of paths of length $\ell-1$.
  Write $d=2t/n$ and
  let $B''$ be obtained from $B'$ by deleting every vertex of degree at least $Cd$.
  By \cref{cl:badpaths}, the number of $(\ell-1)$ paths that we removed when obtaining $B''$
  is \whp{} $o(n/d)$.
  Evidently, $\Delta(B'')\le Cd$.
	By \cref{cl:degen},
  since $b^2 n^{2k+4}/t^{2k+2}\le n^5/(40t^3)$ (here we use $t\ge 40n$),
  and since we may assume $b\gg 1$,
  $B''$ is, \whp{}, $6$-degenerate.
	Thus, by \cref{lem:path:embed}, $B'$ has, \whp{},
  at most $2b\cdot 6^{k+1}\cdot (3t/n)^k + o(n/d)\ll n^2/t$ paths of length $\ell-1$.
	Thus, \whp{}, $X\ll n^2/t$.
	Therefore, \whp{}, the probability of hitting a trap in $t$ steps is at most
  $tX/(\binom{n}{2}-t)\ll 1$.
\end{proof}

\section{Concluding remarks and open problems}\label{sec:conclude}
We have proposed a new model for a controlled random graph process
that falls into the broader category of online decision-making under uncertainty.
The question we considered is the following:
given a monotone graph property $\cP$,
is there an online algorithm that decides whether to take or leave any arriving edge
that obtains, \whp{},
the desired property within given time and budget constraints.
We analysed the model for several natural graph properties:
connectivity, minimum degree, Hamiltonicity, perfect matchings
and the containment of fixed-size trees and cycles.
Our focal point was the investigation of the inevitable trade-off
between the total number of edges Builder observes (maximum time)
and the number of edges he might need to purchase (minimum budget).
In some cases, the trade-off is substantial
(for example, to construct a triangle in a close-to-optimal time, namely,
when these just appear in the random graph process,
Builder must purchase an almost-linear number of edges).
For containment of fixed subgraphs, we have quantified that trade-off precisely.

Although we have made some sizable progress,
there are a few challenging open problems to consider.
Returning to our first result,
we have shown (\cref{thm:mindeg:hit}) that the budget needed to obtain minimum degree $k$ at the hitting time
is at most $\co_kn$.
We believe that this result is (asymptotically) optimal in the following sense:
\begin{conjecture}\label{conj:mindeg}
  For every $k\ge 2$ and $\eps>0$,
  if $b\le (\co_k-\eps) n$
  then for any $(\tau_k,b)$-strategy $B$ of Builder,
  \[
    \lim_{n\to\infty} \pr(\delta(B_{\tau_k})\ge k) = 0.
  \]
\end{conjecture}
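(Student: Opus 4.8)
The plan is to mirror the lower-bound philosophy used elsewhere in the paper: show that in order to beat the budget $o_k n$, Builder would have to make ``lucky'' choices that \whp{} do not materialise in the limited window before $\tau_k$. The starting point is the structure of the random $k$-nearest neighbour graph $\cO_k$: the greedy strategy from the proof of \cref{thm:mindeg:hit} exactly produces $\cO_k$, and by \cref{thm:ok} its edge count concentrates around $o_kn$. So the real content of the conjecture is that \emph{no strategy can do asymptotically better than greedy} at the hitting time. First I would set up the relevant ``budget accounting'': at time $\tau_k$ every vertex of $B_{\tau_k}$ must have degree at least $k$, so $\sum_v \deg_{B_{\tau_k}}(v) \ge kn$, i.e.\ Builder must purchase at least $kn/2$ edges; the gap between $kn/2$ and $o_kn$ is precisely the ``slack'' that $\cO_k$ wastes on vertices that end up with degree $>k$. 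The conjecture asserts Builder cannot recover essentially any of that slack.

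The key steps, in order, would be: (1) Fix a near-optimal strategy $B$ with budget $b \le (o_k-\eps)n$ and, for contradiction, assume it achieves $\delta(B_{\tau_k})\ge k$ with probability bounded away from $0$. (2) Couple the process with the edge-weight representation used in \cref{obs:ok:kout}: give each edge $e$ an independent uniform weight, and let $\tau_k$ be the first time the weight-threshold graph has minimum degree $k$; this realises $G_s$ as the set of edges of weight $\le$ the $s$-th order statistic, and $\cO_k$ as the greedy purchase. (3) Show that ``most'' edges Builder is forced to purchase under \emph{any} strategy are already $\cO_k$-edges at the moment they arrive: when an edge $e=\{u,v\}$ of weight $w$ arrives and both $u,v$ still have $B$-degree $<k$, then unless $G_s$ already contains $k$ lighter edges at $u$ (resp.\ $v$)—in which case $e\notin\cO_k$ but then $u$ (resp.\ $v$) has many available cheaper edges Builder \emph{didn't} take—the edge $e$ lies in $\cO_k$. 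Quantify this to conclude that $|E(B_{\tau_k}) \setminus E(\cO_k)|$ is $o(n)$ \whp{}, forcing $|E(B_{\tau_k})| \ge |E(\cO_k)| - o(n) \ge (o_k - \eps/2)n$ \whp{} by \cref{thm:ok}, contradicting $b \le (o_k - \eps)n$. (4) The ``unless'' clause in step (3) is where the subtlety lies: Builder might deliberately skip a cheap $\cO_k$-edge at $u$ early, betting that $u$'s degree gets filled later by a \emph{non-}$\cO_k$ edge that is ``shared'' efficiently with another low-degree vertex. One must rule out that such sharing can cover more than $o(n)$ vertices. This is handled by a first-moment / entropy argument on the set $W$ of vertices whose final degree-$k$ certificate in $B_{\tau_k}$ uses an edge not in $\cO_k$: each such vertex $v$ ``wasted'' at least one $\cO_k$-opportunity, and by the local structure of nearest-neighbour graphs (degrees in $\cO_k$ are whp.\ bounded, and the expected excess $o_k - k/2$ is an explicit constant), the number of vertices that can be simultaneously served by a single shared edge is $O(1)$; hence $|W| = O(b - kn/2 + o(n)) = O((o_k - k/2)n)$, and a more careful count pinning down the constant gives $|W| \le (o_k - k/2 - \eps')n$, which is the crux of the contradiction.

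The hard part will be step (4): making the ``shared edge'' / ``wasted opportunity'' bookkeeping rigorous and getting the constant to come out exactly at $o_k$ rather than at some weaker bound. This amounts to proving a matching lower bound for the edge-count of $\cO_k$ \emph{among all ways of certifying minimum degree $k$ using only edges present by time $\tau_k$}—essentially an online-to-offline reduction showing that the offline optimum (which, one would need to argue, is $(1-o(1))o_k n$ whp., perhaps via an LP-duality or exchange argument on the weighted instance) cannot be beaten online. I expect this to require a delicate analysis of the boundary behaviour near $\tau_k$, since the last few low-degree vertices—those that determine $\tau_k$—are exactly the ones where Builder has the least freedom, and controlling their contribution to the edge count with the required precision is the main obstacle. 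A cleaner route, if available, would be to prove directly that \emph{offline} optimum edge-count to certify $\delta\ge k$ in $G_{\tau_k}$ is $(o_k - o(1))n$ whp.\ (a purely structural statement about $\cO_k$-type graphs, provable by the Cooper--Frieze branching-process machinery behind \cref{thm:ok}), and then note the online player can only do worse; I would pursue that reformulation first.
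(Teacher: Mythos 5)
First, note that the paper offers no proof of this statement: \cref{conj:mindeg} is stated as an open conjecture, so there is no argument of the authors to compare yours against, and your text would have to stand on its own as a complete proof. It does not. Steps (3) and (4) are exactly where the content of the conjecture lies, and both are left as plans rather than arguments: you acknowledge that the ``shared edge / wasted opportunity'' bookkeeping is unresolved, and the claim in step (3) that any successful Builder's edge set must essentially coincide with $E(\cO_k)$ is asserted, not proved. There is no a priori reason Builder's certificate of minimum degree $k$ should overlap substantially with the $k$-nearest-neighbour graph; the conjecture is about the \emph{number} of edges any online strategy needs, and nothing in your sketch rules out a strategy whose purchased graph is structurally quite different from $\cO_k$ yet cheaper.

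More seriously, your proposed ``cleaner route'' --- prove that the \emph{offline} optimum number of edges certifying $\delta\ge k$ in $G_{\tau_k}$ is $(o_k-o(1))n$ \whp{} and then observe that online can only do worse --- is provably false, so it cannot be the way in. Already for $k=1$: at the hitting time $\tau_1$ the random graph process \whp{} contains a perfect matching (the hitting times for minimum degree $1$ and for a perfect matching coincide), so offline one can certify minimum degree $1$ with $n/2$ edges, whereas $o_1>1/2$ (indeed $o_1=3/4$). For general $k$ the situation is the same: \whp{} at time $\tau_k$ the graph contains $\lfloor k/2\rfloor$ edge-disjoint Hamilton cycles (plus a perfect matching for odd $k$), so the offline optimum is $\sim kn/2$, strictly below $o_kn$ since $o_k>k/2$. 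Consequently any proof of \cref{conj:mindeg} must exploit the online/irrevocable nature of Builder's decisions in an essential way --- a lower bound against the offline benchmark, or an LP/exchange argument on the weighted instance as you suggest, cannot reach $o_kn$. This is precisely the difficulty that makes the statement a conjecture, and your proposal does not yet contain an idea that addresses it.
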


We continued by showing (\cref{thm:mindeg}) that if we allow the time to be only asymptotically optimal,
then an asymptotically optimal budget suffices.
For large enough $k$,
a similar statement for $k$-vertex-connectivity follows from \cref{thm:mindeg:hit,cor:ok:size}.
For $k=1$, as demonstrated in the introduction, this is trivially true.
Recently,
after an earlier version of this paper appeared online,
Lichev~\cite{Lic22+} proved that this is true for every $k\ge 2$:
\begin{theorem}[\cite{Lic22+}]\label{thm:conn:lichev}
  Let $k\ge 2$ be an integer and let $\eps>0$.
  If $t\ge(1+\eps)n\log{n}/2$ and $b\ge (1+\eps)kn/2$
  then there exists a $(t,b)$-strategy $B$ of Builder such that
  \[
    \lim_{n\to\infty} \pr(B_t\text{ is $k$-connected}) = 1.
  \]
\end{theorem}
For $k=2$, we do not have a guess for the correct budget threshold at the hitting time
for $2$-connectivity, and believe it might be larger than $\co_2n=\frac{11}{8}n$.

For Hamilton cycles, we showed
(in \cref{thm:ham:hit}) that
at the hitting time and with an inflated budget,
Builder has a strategy that typically constructs a Hamilton cycle.
Together with \cref{thm:ham} it follows that
if time is (asymptotically) optimal and the budget is inflated (by a constant factor),
or when the budget is (asymptotically) optimal and the time is inflated,
then Builder has such a strategy.
As we mentioned in the introduction, Anastos~\cite{Ana22+} recently proved
that this is doable in asymptotically optimal time {\em and} budget (but {\em not} at the hitting time).
We remark that
for any $k=k(n)\ge 1$,
if $t\ll n^2/k^2$ and $b\le n+k$, then any $(t,b)$-strategy of Builder fails \whp{};
this
follows almost directly from \cite{FKSV16} (see Section~5 there).
This implies that if $t$ is asymptotically optimal
(or even $t=O(n\log{n})$)
then, in particular,
a budget of $n+o(\sqrt{n}/\log{n})$ does not suffice.

It might be interesting to extend either the result of Anastos or \cref{thm:ham:hit}
to other spanning structures.
For example, can Builder construct (\whp{}) the square of a Hamilton cycle at its hitting time
(or after its threshold)
using only a linear budget?
More generally, can Builder construct (\whp{}) any given bounded-degree spanning graph
slightly after its threshold
using only a linear budget?
It might also be interesting to optimise the constant $C$ in \cref{thm:ham:hit},
or find the minimum $k$ for which $\cO_{k,2}$ (or $\cO_k$) is \whp{} Hamiltonian.

\Cref{thm:trees,thm:cycles} considered the budget thresholds for trees and cycles.
It is not hard to extend our result on cycles to any fixed unicyclic graph:
after obtaining the cycle, the remaining forest can be constructed quickly and with a constant budget.
The smallest graph not covered by our results is therefore the diamond (a $K_4$ with one edge removed).
One of the difficulties in this particular problem
(or, possibly, in the case of any non-unicyclic graph)
is that several quite different natural strategies, some are naive and some more sophisticated,
turn out to give different upper bounds; each is superior in a different time regime.
We handled a similar difficulty when discussing cycles
but with lesser apparent complexity.
It would be interesting to
develop tools and approaches allowing to tackle the case of a generic fixed graph.

Finally, while {\em the} random graph process might be the most natural underlying graph process,
other processes may be considered.
One compelling example is the so-called {\em semi-random graph process}~\cite{BHKPSS20}
mentioned in the introduction.
The semi-random process has already exhibited some intriguing phenomena that significantly distinguish it from
the (standard) random graph process.
In our context, instead of observing a flow of random edges,
Builder sees a flow of random vertices
and, at each round, decides whether to connect the observed vertex to any other vertex.
Evidently, Builder can achieve connectivity with $t=b=n-1$;
answering questions concerning, e.g., Hamilton cycles or fixed subgraphs would be interesting.

\paragraph{Acknowledgement}
The authors thank the anonymous referees for their careful reading of the paper
and helpful suggestions that improved its presentation.

\bibliography{library}

\end{document}